\newtheorem{thm}{Theorem}[section]
\newtheorem{prop}[thm]{Proposition}
\newtheorem{lem}[thm]{Lemma}
\newtheorem{cor}[thm]{Corollary}
\newtheorem{remark}{Remark}
\renewcommand{\O}{\Omega}
\newcommand\Th{{\mathcal T}_h}
\newcommand\Eh{{\mathcal E}_h}
\newcommand\R{\mathbb{R}}
\newcommand\C{\mathbb{C}}
\newcommand\D{\mathbb{D}}
\renewcommand{\P}{{\mathcal P}}  
\def\decapita#1{}
\def\grecomultibold#1#2{\grecobolddef#1\def\secondobold{#2}%
    \ifx#2\finemultibold\let\next\relax\let\secondobold\relax
    \else\let\next\grecomultibold
    \fi\expandafter\next\secondobold}
\def\grecobolddef#1{%
  \edef\dadef{bf\expandafter\decapita\string#1}%
  \expandafter\def\csname\dadef\endcsname{{\neretto #1}}}
\def\neretto#1{\setbox0=\hbox{\mathsurround=0pt$#1$}%
  \kern.02em\copy0 \kern-\wd0
  \kern-.02em\copy0 \kern-\wd0
  \raise.03em\box0 \kern.02em}
\def\bdiv{\mathop{\bf div}\nolimits}
\def\curl{\mathop{\rm curl}\nolimits}
\def\bcurl{\mathop{\bf curl}\nolimits}
\def\teps{{\bfvarepsilon}}
\def\A{{\mathcal A}}
\def\Ih{{\mathcal I}_h}
\def\IE{{\mathcal I}_E}
\def\Mh{{\mathcal M}_h}
\def\wbox#1;#2;{\vbox{\hrule\hbox{\vrule height#1mm\kern#2mm\vrule
  height#1mm}\hrule}}
\let\phi\varphi
\let\b\b
\newcommand{\bba}   {\mathbf{a}}
\newcommand{\bbc}   {\mathbf{c}}
\newcommand{\bbe}   {\mathbf{e}}
\newcommand{\bbf}   {\mathbf{f}}
\newcommand{\bbm}   {\mathbf{m}}
\newcommand{\bbn}   {\mathbf{n}}
\newcommand{\bbr}   {\mathbf{r}}
\newcommand{\bbt}   {\mathbf{t}}
\newcommand{\bbu}   {\mathbf{u}}
\newcommand{\bbv}   {\mathbf{v}}
\newcommand{\bbw}   {\mathbf{w}}
\newcommand{\bbx}   {\mathbf{x}}
\def\C{\mathbb C}
\def\bo#1{\text{\bf{#1}}}
\def\hpoint#1.#2.#3{{\underline{#1}}_{#2}\cdot
 {\underline{\mathop{\smash{#3}\vphantom{{#1}_{#2}}}}}}
\def\npointp#1.#2{{\underline{#1}}\cdot
 {\underline{\mathop{\smash{#2}\vphantom{{#1}}}}}}
\def\bx{\bo{x}}
\def\beq{\begin{equation}}
\def\enq{\end{equation}}
\def\bfzero{{\bf 0}}
\author{
E. Artioli\thanks{Department of Civil Engineering and Computer Science,
	University of Rome Tor Vergata,
	Via del Politecnico 1, 00133 Rome, Italy,
	{\tt artioli@ing.uniroma2.it}},
S. {d}e Miranda\thanks{DICAM, University of Bologna, Viale Risorgimento 2, 40136 Bologna, Italy,
	{\tt stefano.demiranda@unibo.it}},
C. Lovadina\thanks{
Dipartimento di Matematica, Universit\`a di Milano, Via Saldini 50, 20133 Milano,
and IMATI del CNR, Via Ferrata 1, 27100 Pavia, Italy,
{\tt carlo.lovadina@unimi.it}},
L. Patruno\thanks{DICAM, University of Bologna, Viale Risorgimento 2, 40136 Bologna, Italy, {\tt luca.patruno@unibo.it}}
}
\date{}
\title{A Dual Hybrid Virtual Element Method for Plane Elasticity Problems}
\begin{document}

\maketitle

\begin{abstract}
A dual hybrid Virtual Element scheme for plane linear elastic problems is presented and analysed. In particular, stability and convergence results have been established. The method, which is first order convergent, has been numerically tested on two benchmarks with closed form solution, and on a typical microelectromechanical system. The numerical outcomes have proved that the dual hybrid scheme represents a valid alternative to the more classical low-order displacement-based Virtual Element Method.
\end{abstract}
{
\section{Introduction}

The Virtual Element Method (VEM) is a recent methodology to approximate partial differential equation problems. Introduced in \cite{volley}, it is a Galerkin method which can be considered as an evolution of the Finite Element Method (FEM). In contrast to FEM, VEM is able to naturally manage several mesh complexities, such as polytopal shapes or hanging nodes. In addition to this flexibility, it has been realized that VEM is also able to efficiently deal with other non-trivial situations, for instance problems with internal constraints (incompressibility for solids and fluids is an example), or problems with high-continuity requirements (the fourth-order Kirchhoff plate is an example). The price to pay is that the shape functions are not {\em explicitly} known and thus they are called {\em virtual}. However, the available information on them is sufficient to form the stiffness matrix and the right-hand side of the discretized problem. For the analysis of the VEM technique for the basic second-order elliptic problems, we refer to \cite{volley,BLRXX,Brenner-Guan-Sung}. 

Focusing on the linear elasticity problem, the VEM philosophy has been already conjugated in several ways. In the easiest framework, the displacement-based variational formulation, VEMs have been considered in \cite{BBM-VEMelast,ABLS_part_I,GTP14}, and a procedure to recover an accurate stress field has been proposed in \cite{Artioli_IJNME_2019}. For nearly-incompressible materials, the VEM schemes proposed in \cite{BLV} for the Stokes problem has been applied in connection with the displacement/pressure formulation, see for example \cite{Hughes:book,TA2018}.  Furhermore, the Hellinger-Reissner variational principle has been recently employed to develop Virtual Element Methods with approximated stresses exhibiting, {\em a priori}, symmetry and inter-element traction continuity features, see \cite{ADLP_HR,ADLP_HR_FAM}. Among other works regarding linear elasticity problems, also concerning incompressible materials or plate structures, as well as non-conforming schemes or different variational frameworks, we here mention \cite{Antonietti-Beirao-Mora-Verani,Brezzi-Marini-VEMplate,Zhang-Zhao-Yang,Gatica-pseudo,Beirao-Mora-Rivera,MGV,ZZCM,Chinosi-plate,ZHANG20181,ZHANG201949}.

In the present contribution, we explore the possibility to develop VEM schemes in the framework of the so-called dual hybrid formulation, see \cite{Bo-Bre-For} for instance. Thus, we are concerned with a variational setting where the unknowns are both the stress and the displacement fields. Once the computational domain is partitioned into (polytopal) elements, the stress field is assumed to {\em a priori} satisfy the equlibrium equation locally on each element. It is then required to locally maximize the complementary energy; the displacement field enters into play only on the interelement boundaries and plays the r\^ole of the Lagrange multiplier for the traction continuity constraint. For Finite Elements, this variational approach has been used in \cite{PianWu,PianSumihara}, for example. In our VEM scheme, the local stress space is borrowed from the one introduced in \cite{ADLP_HR}, while the standard low-order nodal Virtual Element space is essentially used for the displacement field. As usual for VEMs, a local polynomial projection is introduced as a basic ingredient to form the stiffness matrix of the method. Specifically, here, given a virtual stress, the VEM projection returns a suitable computable polynomial stress. In this paper we present and numerically investigate two different projections, the second of which is a low-cost improvement of the first one (already used in \cite{ADLP_HR}). We remark that hybrid formulations for the linear elastic problem are not only interesting {\em per se}, but also they may be used as a building block for other more complex situations, for instance plate problems (see \cite{deMirandaUbertini}).

A brief outline of the paper is as follows. In Section \ref{sec:1} we present the (2D) elastic problem together with its dual hybrid variational formulation. Section \ref{s:HR-VEM} describes the Virtual Element approximation we propose. In Section \ref{s:theoretical} we develop the stability and convergence analysis, by using a suitable mesh-dependent norm for the stress field. The mumerical results, which confirm the theoretical predictions, are detailed in Section \ref{s:numer}, while some concluding remarks are drawn in Section \ref{s:conclusions}.

Throughout the paper, given two quantities $a$ and $b$, we use the notation $a\lesssim b$ to mean: there exists a constant $C$, independent of the mesh-size, such that $a\leq C\, b$. If $a\lesssim b$ and $b\lesssim a$, we will write $a\approx b$. Moreover, we use standard notations for Sobolev spaces, norms and semi-norms (cf. \cite{Lions-Magenes}, for example). Finally, given a subset $\omega\subseteq \R^2$, we will denote with $\P_k(\omega)$ ($k\ge 0$) the space of polynomials of degree up to $k$ and defined on $\omega$.

\section{The elasticity problem in the dual hybrid form}\label{sec:1}

In this section we briefly present the linear elasticity problem in dual hybrid form. More details about the dual hybrid formulations of second-order problems can be found in \cite{Bo-Bre-For}. We start by considering the strong form of the problem we are interested in:


\begin{equation}\label{strong}
\left\lbrace{
	\begin{aligned}
	&\mbox{Find } (\bfsigma,\bbu)~\mbox{such that}\\
	&-\bdiv \bfsigma= \bbf\qquad \mbox{in $\Omega$}\\
	& \bfsigma = \C \teps(\bbu)\qquad \mbox{in $\Omega$}\\
	&\bbu_{|\partial\Omega}=\bfzero,
	\end{aligned}
} \right.
\end{equation}
where $\O\subset \R^2$ is a polygonal domain, $\bfsigma$ and $\bbu$ are the unknown stress and displacement fields, respectively. Moreover, $\bbf$ represents the body force density, $\C$ is the elasticity tensor and $\teps(\cdot)$ is the usual symmetric gradient operator. We consider only the clamped case along the whole boundary, but other boundary conditions can be treated in standard ways (see \cite{Bo-Bre-For}, for instance).

Let now $\{\mathcal{T}_h\}$ be a sequence of decompositions of $\Omega$ into general polygonal elements $E$ with
\[
h_E := {\rm diameter}(E) , \quad
h := \sup_{E \in \mathcal{T}_h} h_E .
\]
Given $\Th$, let us denote with $\Eh =\bigcup_{E\in\Th}\partial E$ the skeleton of $\Th$. The dual hybrid formulation of Problem \eqref{strong} is a variation approach for which:

\begin{itemize}

\item the stress field $\bfsigma$ {\em a-priori} satisfies the equilibrium equation in each element $E\in\Th$;

\item the displacement field $\bbu$ enters into play essentially only on the skeleton $\Eh$, where it acts as a Lagrange multiplier for the continuity of the tractions $\bfsigma\bbn$.

\end{itemize}

More precisely, defining with $(\cdot,\cdot)_E$ the scalar product in $L^2(E)$,
and $a_E(\bfsigma,\bftau):=(\D \bfsigma, \bftau)_E$, with $\D$ the compliance tensor (i.e. the inverse of the tensor $\C$), the dual hybrid formulation stems from considering the critical points of the following functional:

\begin{equation}\label{eq:dh-functional}
{\mathcal E}(\bftau,\bbv)= -\frac{1}{2}\sum_{E\in\Th}a_E(\bftau,\bftau) +\sum_{E} \int_{\partial E} \bftau\bbn\cdot \bbv
\qquad \bftau\in\Sigma^f\, , \, \bbv\in U^0 .
\end{equation}
Above, the spaces $\Sigma^f$ and $U^0$ are defined by:

\begin{equation}\label{eq:cont-spaces}
\left\lbrace{
\begin{aligned}
&\Sigma^f = \prod_ {E\in\Th} \Sigma^f(E), \\
&U^0 =  H^1_0(\Omega)^2 ,
\end{aligned}
} \right.
\end{equation}
where

\begin{equation}\label{eq:cont-loc_spaces}
	\Sigma^f(E) = \Big\{ \bftau\in H(\bdiv;E)_s\, :\, \bdiv\bftau +\bbf =\bfzero \Big\}
\end{equation}
and
$$
H(\bdiv;E)_s=\Big\{ \bftau\in L^2(E)^{2\times 2}\, :\, \mbox{$\bftau$ is symmetric,}\, \bdiv\bftau\in L^2(E)^2  \Big\}.
$$
%
In $\Sigma^f$ we introduce the obvious norm:

\begin{equation}\label{eq:sigma-norm}
||\bftau ||_\Sigma := \left(
\sum_{E\in\Th} ( || \bftau ||_{0,E}^2 + || \bdiv\bftau ||_{0,E}^2)
\right)^{1/2} .
\end{equation}

Selecting a {\em particular locally self-equilibrated symmetric stress solution} $\widehat{\bfsigma}_f\in \Sigma^f$, i.e. such that
$(\bdiv\widehat{\bfsigma}_f +\bbf)_{|E}=\bfzero$ for every $E\in\Th$, the stress solution $\bfsigma$ can be decomposed as

\begin{equation}\label{eq:sigma-dec}
\bfsigma = \bfsigma^0 + \widehat{\bfsigma}_f \qquad \mbox{with $\bfsigma^0\in \Sigma^0$}.
\end{equation}
Consequently, stationarity of functional \eqref{eq:dh-functional} leads to the variational problem for the unknowns $\bfsigma^0$ and $\bbu$:

\begin{equation}\label{cont-pbl}
\left\lbrace{
\begin{aligned}
&\mbox{Find } (\bfsigma^0,\bbu)\in \Sigma^0\times U^0~\mbox{such that}\\
&a(\bfsigma^0,\bftau^0) + b(\bftau^0, \bbu)= F(\bftau^0) \quad \forall \bftau^0\in \Sigma^0\\
& b(\bfsigma^0, \bbv) = G(\bbv) \quad \forall \bbv\in U^0 ,
\end{aligned}
} \right.
\end{equation}
where $\Sigma^0$ is defined according with \eqref{eq:cont-spaces} and \eqref{eq:cont-loc_spaces}, choosing $\bbf=\bfzero$.
Above, for $\bftau^0\in\Sigma^0$ and $\bbv\in U^0$, we have set:

\begin{equation}\label{eq:cont-glob_forms}
\left\lbrace{
	\begin{aligned}
	&a(\bfsigma^0,\bftau^0) = \sum_{E\in\Th} a_E(\bfsigma^0,\bftau^0) ,\\
	&b(\bftau^0, \bbv) = - \sum_{E\in\Th} \int_{\partial E}\bftau^0\bbn\cdot\bbv ,\\
	& F(\bftau^0 ) = - \sum_{E\in\Th} a_E(\widehat{\bfsigma}_f,\bftau^0)\\
	& G(\bbv) = \sum_{E\in\Th} \int_{\partial E}\widehat{\bfsigma}_f\bbn\cdot\bbv  .
	\end{aligned}
} \right.
\end{equation}
Once $\bfsigma^0$ has been found, the stress solution $\bfsigma$ is simply recovered by using \eqref{eq:sigma-dec}. We remark that $\bbu$, part of the solution to Problem \eqref{cont-pbl}, is not unique, but it is defined up to an element of the subspace

\begin{equation}\label{kerbt}
H = \Big\{ \bbv\in U^0 \, :\, b(\bftau,\bbv) = 0 \quad \forall\, \bftau\in \Sigma^0  \Big\} = \Big\{ \bbv\in U^0 \, :\, \bbv_{|\Eh}  = \bfzero  \Big\} .
\end{equation}
Moreover, the following {\em inf-sup} condition holds (see \cite{Bo-Bre-For}).

\begin{equation}\label{eq:cont-inf-sup}
\sup_{\bftau\in \Sigma^0}\frac{b( \bftau,\bbv)}{|| \bftau||_\Sigma }\gtrsim ||\bbv||_{U^0/H} \qquad\forall\bbv\in U^0.
\end{equation}

In the sequel, we will also consider the following important subspace of $\Sigma^0$:

\begin{equation}\label{eq:kernel}
K = \Big\{ \bftau\in \Sigma^0 \, :\, b(\bftau,\bbv) = 0 \quad \forall\, \bbv\in U^0  \Big\} = \Big\{ \bftau\in H(\bdiv;\O) \, :\, \bdiv\bftau = \bfzero  \Big\} .
\end{equation}
We remark that, on $K$, we have $||\bftau||_\Sigma = ||\bftau||_{0,\O}$ (cf. \eqref{eq:sigma-norm}), and it holds:

\begin{equation}\label{eq:elker-cont}
a(\bftau,\bftau)\gtrsim ||\bftau||_\Sigma^2\qquad \forall \bftau\in K .
\end{equation}

Obviously, Problem \eqref{cont-pbl} is equivalent to:

\begin{equation}\label{cont-pbl-v2}
\left\lbrace{
	\begin{aligned}
	&\mbox{Find } (\bfsigma,\bbu)\in \Sigma^f\times U^0~\mbox{such that}\\
	&a(\bfsigma,\bftau^0) + b(\bftau^0, \bbu)= 0 \quad \forall \bftau^0\in \Sigma^0\\
	& b(\bfsigma, \bbv) = 0 \quad \forall \bbv\in U^0 ,
	\end{aligned}
} \right.
\end{equation}

From \eqref{eq:cont-inf-sup} and \eqref{eq:elker-cont}, the general theory of mixed methods (see \cite{Bo-Bre-For}, for instance) gives that Problem \eqref{cont-pbl} has a unique solution $(\bfsigma^0,\bbu)\in \Sigma^0\times U^0/H$ (with an abuse of notation, we here use $\bbu$ to denote the equivalence class of the function $\bbu$ in $U^0/H$). Therefore, we infer that also Problem \eqref{cont-pbl-v2} has a unique solution $(\bfsigma,\bbu)\in \Sigma^0\times U^0/H$. The quotient space $U^0/H$ essentially means that in Problems \eqref{cont-pbl} and \eqref{cont-pbl-v2} the displacement field $\bbu$ enters only through its trace $\bbu_{|\Eh}$ on the skeleton $\Eh$, see \eqref{kerbt}.
The following result can be deduced using the discussion in \cite{RT1991}, and justifies the variation framework \eqref{cont-pbl} (or \eqref{cont-pbl-v2}) for the elasticity Problem \eqref{strong}.

\begin{prop}\label{pr:equival}
Let $(\bfsigma,\bbu)$ be a sufficiently smooth solution to Problem \eqref{strong}. Then $(\bfsigma,\bbu)\in \Sigma^0\times U^0/H$ is the solution of Problem \eqref{cont-pbl-v2}. \qed
\end{prop}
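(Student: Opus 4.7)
The plan is to verify, by straightforward integration by parts on each element $E\in\Th$, that a sufficiently smooth classical solution $(\bfsigma,\bbu)$ of \eqref{strong} fulfills both equations of \eqref{cont-pbl-v2} and lies in the right spaces. First I would check membership. Since $\bfsigma=\C\teps(\bbu)$ is symmetric, $-\bdiv\bfsigma=\bbf$ holds in $\Omega$ and a fortiori on each element, and smoothness yields $\bfsigma\in H(\bdiv;E)_s$ for every $E$; hence $\bfsigma\in\Sigma^f$ by \eqref{eq:cont-loc_spaces}. The clamped boundary condition gives $\bbu\in H^1_0(\Omega)^2=U^0$.

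Next I would establish the second equation of \eqref{cont-pbl-v2}. By definition of $b$ we have $b(\bfsigma,\bbv)=-\sum_{E\in\Th}\int_{\partial E}\bfsigma\bbn\cdot\bbv\,ds$. Since the smooth $\bfsigma$ has continuous normal tractions across each internal edge, the contributions from the two elements sharing such an edge cancel (opposite outward normals, single-valued $\bbv$). Only boundary edges survive in the sum, and there $\bbv=0$ since $\bbv\in U^0$. Therefore $b(\bfsigma,\bbv)=0$ for every $\bbv\in U^0$.

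For the first equation, I would use the constitutive law to rewrite $\D\bfsigma=\D\C\teps(\bbu)=\teps(\bbu)$, so that
\begin{equation*}
a(\bfsigma,\bftau^0)=\sum_{E\in\Th}(\teps(\bbu),\bftau^0)_E.
\end{equation*}
Applying the Green formula on each $E$ (using symmetry of $\bftau^0$) gives $(\teps(\bbu),\bftau^0)_E=-(\bbu,\bdiv\bftau^0)_E+\int_{\partial E}\bftau^0\bbn\cdot\bbu\,ds$. Because $\bftau^0\in\Sigma^0$ satisfies $\bdiv\bftau^0=\bfzero$ on every $E$, the volume term drops and, summing over $E$, one obtains $a(\bfsigma,\bftau^0)=\sum_{E}\int_{\partial E}\bftau^0\bbn\cdot\bbu\,ds=-b(\bftau^0,\bbu)$, i.e. the first equation of \eqref{cont-pbl-v2}.

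The main obstacle is essentially bookkeeping of signs and edge contributions rather than any deep analytic difficulty: one has to be careful that the assumed smoothness guarantees enough regularity for the Green identities to hold elementwise and for the normal tractions of $\bfsigma$ to match across internal edges, so that the skeleton sums reduce as claimed. Uniqueness in $\Sigma^f\times U^0/H$ having already been recorded from the general mixed-theory argument via \eqref{eq:cont-inf-sup} and \eqref{eq:elker-cont}, this verification is all that is needed to identify the classical solution with the one of \eqref{cont-pbl-v2}.
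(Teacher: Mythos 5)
Your verification is correct and complete: membership of $(\bfsigma,\bbu)$ in $\Sigma^f\times U^0$ follows from the strong equations and the clamped condition, the second equation of \eqref{cont-pbl-v2} follows from traction continuity of the smooth stress across interior edges plus the vanishing of $\bbv$ on $\partial\Omega$, and the first follows from $\D\bfsigma=\teps(\bbu)$ together with elementwise Green's formula and $\bdiv\bftau^0=\bfzero$; combined with the uniqueness already obtained from \eqref{eq:cont-inf-sup} and \eqref{eq:elker-cont}, this identifies the classical solution with the solution of \eqref{cont-pbl-v2}. The paper states this proposition without proof, deferring to the discussion in its reference on hybrid formulations, and your elementwise integration-by-parts argument is exactly the standard verification that citation stands in for, so there is nothing to object to beyond noting that the displacement is recovered only as an equivalence class in $U^0/H$, i.e.\ through its trace on the skeleton, which your final paragraph already acknowledges.
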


\section{The Virtual Element Method}
\label{s:HR-VEM}

We now outline the Virtual Element Method we propose for the discretization of Problem \eqref{cont-pbl}.
To develop the theoretical analysis of the scheme, we suppose that for all $h$, each element $E$ in $\mathcal{T}_h$ fulfils the following assumptions:
\begin{itemize}
	\item $\mathbf{(A1)}$ $E$ is star-shaped with respect to a ball of radius $ \ge\, \gamma \, h_E$,
	\item $\mathbf{(A2)}$ the distance between any two vertexes of $E$ is $\ge c \, h_E$,
\end{itemize}
where $\gamma$ and $c$ are positive constants. We remark that the hypotheses above, though not too restrictive in many practical cases,
can be further relaxed, as noted in ~\cite{volley}.

We suppose that the compliance tensor $\D$ is piecewise constant with respect to the underlying mesh $\Th$. The analysis for a general (sufficiently smooth) tensor $\D$ follows from this case using the same arguments of \cite{ADLP_HR}. In addition, we assume that the load term $\bbf$ is piecewise constant with respect to the underlying mesh $\Th$. If $\bbf$ is indeed smooth, this modification introduces an $O(h)$ perturbation on the solution which does not spoil the convergence rate of our scheme.

\subsection{The local spaces}\label{ss:E-spaces}

Given a polygon $E\in\Th$ with $n_E$ edges, we first introduce the space of local infinitesimal rigid body motions:

\begin{equation}\label{eq:rigid}
RM(E)=\left\{ \bbr(\bbx) = \bba + b(\bbx -\bbx_C)^\perp \quad \bba\in\R^2, \ b\in \R  \right\}.
\end{equation}
Here above, given $\bbc=(c_1,c_2)^T\in\R^2$, $\bbc^\perp$ is the clock-wise rotated vector  $\bbc^\perp=(c_2,-c_1)^T$, and $\bbx_C$ is the baricenter of $E$. For each edge $e$ of $\partial E$, we introduce the space

\begin{equation}\label{eq:edge_approx}
R(e)=\left\{ \bbt(s) = \bbc + d\,s\,\bbn \quad \bbc\in\R^2, \ d\in \R, \ s\in [-1/2,1/2]  \right\}.
\end{equation}
Here above, $s$ is a local linear coordinate on $e$, such that $s=0$ corresponds to the edge midpoint. Furthermore, $\bbn$ is the outward unit normal to the edge $e$. Hence, $R(e)$ consists of vectorial functions which are constant in the edge tangential direction, while they are linear along the edge normal direction. Then, we set:

\begin{equation}\label{eq:local_stress}
\begin{aligned}
 \Sigma_h(E)=\Big\{ \bftau_h\in & H(\bdiv;E)\ :\ \exists \bbw^\ast\in H^1(E)^2 \mbox{ such that } \bftau_h=\C\teps(\bbw^\ast);\\  &(\bftau_h\,\bbn)_{|e}\in R(e) \quad \forall e\in \partial E;\quad
\bdiv\bftau_h\in RM(E) \Big\}.
\end{aligned}
\end{equation}

\begin{remark} Alternatively, the space \eqref{eq:local_stress} can be defined as follows.
	\begin{equation}\label{eq:local_stress-alt}
	\begin{aligned}
	 \Sigma_h(E)=\Big\{ \bftau_h\in & H(\bdiv;E)_s\ :\  \curl \bcurl(\D \bftau_h) = 0 ; \\  &(\bftau_h\,\bbn)_{|e}\in R(e) \quad \forall e\in \partial E;\quad
	 \bdiv\bftau_h\in RM(E) \Big\}.
	\end{aligned}
	\end{equation}
Here above, the equation $\curl \bcurl (\D \bftau_h) = 0$ is to be intended in the distribution sense.	
\end{remark}

We remark that, once $(\bftau_h\,\bbn)_{|e}=\bbc_e +d_es\,\bbn$ is given for all $e\in\partial E$, cf. \eqref{eq:edge_approx}, the quantity $\bdiv\bftau_h\in RM(E)$ is determined. Indeed, denoting with $\bfvarphi:\partial E\to \R^2$ the function such that $\bfvarphi_{|e}:=\bbc_e+d_es\,\bbn$, the integration by parts formula

\begin{equation}\label{eq:compat}
\int_E \bdiv\bftau_h\cdot \bbr =
\int_{\partial E}\bfvarphi\cdot \bbr \qquad \forall \bbr\in RM(E)
\end{equation}
allows to compute $\bdiv\bftau_h$ using the $\bbc_e$'s and the $d_e$'s. More precisely, setting (cf \eqref{eq:rigid})

\begin{equation}\label{eq:div1}
\bdiv\bftau_h = \bfalpha_E + \beta_E (\bbx -\bbx_C)^\perp ,
\end{equation}
from \eqref{eq:compat} we infer

\begin{equation}\label{eq:div2}
\left\lbrace{
	\begin{aligned}
&\bfalpha_E =\frac{1}{|E|}\int_{\partial E}\bfvarphi = \frac{1}{|E|}\sum_{e\in\partial E}\int_e \bbc_e \\
&\beta_E = \frac{1}{ \int_E | \bbx -\bbx_C |^2 }\int_{\partial E} \bfvarphi\cdot (\bbx -\bbx_C)^\perp  = \frac{1}{ \int_E | \bbx -\bbx_C |^2 }\sum_{e\in\partial E}\int_e (\bbc_e+d_e s\,\bbn)\cdot (\bbx -\bbx_C)^\perp.
\end{aligned}
} \right.
\end{equation}

We now define the affine space for the local approximation of the stress field:

\begin{equation}\label{eq:affine-sigma}
\Sigma^f_h(E) = \Big\{ \bftau_h\in \Sigma_h(E)\, :\, \bdiv\bftau_h + \bbf =\bfzero \Big\} .
\end{equation}
The space $\Sigma^0_h(E)$ is defined according with \eqref{eq:affine-sigma}, choosing $\bbf=\bfzero$.


The local approximation space for the displacement field is defined as follows:

\begin{equation}\label{eq:local_displ}
U_h(E)=\Big\{ \bbv_h\in  H^1(E)^2\ :\ \bbv_{h|e}\in \P_1(e)^2 \quad\forall e\in \partial E\Big\}.
\end{equation}
We notice that $U_h(E)$ is an infinite dimensional space. However, this will not lead to any computational trouble, since only $\bbv_{h|\partial E}$, the trace of $\bbv_h\in U_h(E)$ on $\partial E$, enters into play in the discrete formulation. Therefore, we may think that the degrees of freedom for $U_h(E)$ are linear functionals which uniquely determine $\bbv_h\in U_h(E)$ on $\partial E$. For instance, one may take the point values of $\bbv_h$ at the vertices of $E$.


\subsubsection{Computation of $\Sigma^f_h(E)$}
\label{sss:computations}
We now show how the space $\Sigma^f_h(E)$ can be described, i.e. how a suitable set of degrees of freedom can be selected. We first number the edges of $\partial E$ as $e_1,e_2\ldots,e_{n_E}$, once and for all. We recall that we have supposed $\bbf_{|E}\in \P_0(E)$.
%
%
%
%
The following result holds.

\begin{lem}\label{lm:sigmaf}
Let $\bbf_{|E}\in \P_0(E)$. Then, the space $\Sigma^f_h(E)$ is characterised by:

\begin{equation}\label{eq:f3}
\begin{aligned}
\Sigma^f_h(E) &= \Big\{ \bftau_h\in \Sigma_h(E)\, :\, \bbc_{n_E} = -\frac{1}{|e_{n_E}|} \Big(|E|\,\bbf_{|E} +\sum_{i=1}^{n_E-1}\int_{e_i}\bbc_{i} \Big) ,  \\
& d_{n_E} =-\frac{1}{ \int_{e_{n_E}} s\,\bbn\cdot (\bbx -\bbx_C)^\perp } \Big(  \sum_{i=1}^{n_E}\int_{e_i} \bbc_{i}\cdot (\bbx -\bbx_C)^\perp  \\
&\quad + \sum_{i=1}^{n_E-1}\int_{e_i} d_{i} s\,\bbn\cdot (\bbx -\bbx_C)^\perp \Big)   \Big\} ,
\end{aligned}
\end{equation}
where we have set $\bbc_i = \bbc_{e_i}$ and $d_i = d_{e_i}$.

\end{lem}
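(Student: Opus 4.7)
The proof is essentially a direct translation of the identities \eqref{eq:div2} into conditions on the edge data $(\bbc_i,d_i)$ of a virtual stress $\bftau_h\in\Sigma_h(E)$, given that $\bbf_{|E}$ is a constant. My plan is as follows.

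First I would recall from \eqref{eq:div1}--\eqref{eq:div2} that every $\bftau_h\in\Sigma_h(E)$ is automatically such that $\bdiv\bftau_h=\bfalpha_E+\beta_E(\bbx-\bbx_C)^\perp\in RM(E)$, with $\bfalpha_E$ and $\beta_E$ fully determined by the boundary data $\bfvarphi_{|e_i}=\bbc_i+d_i s\,\bbn$. Since $\bbf_{|E}\in\P_0(E)$ belongs to $RM(E)$ as the "translation" part ($\beta=0$), the equilibrium equation $\bdiv\bftau_h+\bbf=\bfzero$ is equivalent, by unique decomposition in $RM(E)$, to the two scalar conditions
\begin{equation*}
\bfalpha_E=-\bbf_{|E},\qquad \beta_E=0.
\end{equation*}

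Next I would substitute the first condition into the formula for $\bfalpha_E$ in \eqref{eq:div2}: using $\int_{e_{n_E}}\bbc_{n_E}=|e_{n_E}|\,\bbc_{n_E}$ (since $\bbc_{n_E}$ is constant on $e_{n_E}$) and isolating this term yields exactly the first identity in \eqref{eq:f3}. Analogously, substituting into the formula for $\beta_E$ in \eqref{eq:div2} and isolating the contribution of $d_{n_E}$ gives the second identity.

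The only subtle point, and the one I would treat most carefully, is that the isolation of $d_{n_E}$ requires the denominator $\int_{e_{n_E}} s\,\bbn\cdot(\bbx-\bbx_C)^\perp$ to be non-zero. To verify this, I would parametrise $e_{n_E}$ as $\bbx(s)=\bbx_m+s\,\bbt$, with $\bbx_m$ the midpoint, $\bbt$ the unit tangent and $s$ ranging symmetrically around $0$. Then
\begin{equation*}
\int_{e_{n_E}} s\,\bbn\cdot(\bbx-\bbx_C)^\perp\,\dd s = (\bbn\cdot(\bbx_m-\bbx_C)^\perp)\int_{e_{n_E}} s\,\dd s + (\bbn\cdot\bbt^\perp)\int_{e_{n_E}} s^2\,\dd s.
\end{equation*}
The first integral vanishes by symmetry, and $\bbn\cdot\bbt^\perp=\pm 1$ because $\bbn\perp\bbt$; hence the quantity equals $\pm\int_{e_{n_E}} s^2\,\dd s\neq 0$, as required.

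Combining the two equivalent scalar conditions with the unique solvability for $\bbc_{n_E}$ and $d_{n_E}$ yields the characterisation \eqref{eq:f3}, and conversely any $\bftau_h\in\Sigma_h(E)$ satisfying these two conditions manifestly satisfies $\bdiv\bftau_h+\bbf=\bfzero$ in $E$. I expect no real obstacle beyond a careful bookkeeping of signs in the isolation step and the verification that the denominator does not vanish.
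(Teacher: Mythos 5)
Your proposal is correct and follows essentially the same route as the paper: the paper's proof simply notes (citing a reference) that $\int_{e_{n_E}} s\,\bbn\cdot (\bbx -\bbx_C)^\perp\ne 0$ and then states that \eqref{eq:f3} follows by direct computation from \eqref{eq:div1}--\eqref{eq:div2}, which is exactly the computation you carry out. Your explicit parametrisation argument for the non-vanishing of the denominator is a valid self-contained substitute for the citation.
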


\begin{proof}
We notice, see \cite{ADLP_HR}, that $\int_{e_{n_E}} s\,\bbn\cdot (\bbx -\bbx_C)^\perp\ne 0$. Hence, the right-hand side of \eqref{eq:f3} is well-defined.
Now, the proof easily follows by a direct computation from \eqref{eq:div1} and \eqref{eq:div2}.
\end{proof}

\begin{remark}\label{rm:stress_space}
Obviously, the space $\Sigma^0_h(E)$ is defined by \eqref{eq:f3} by choosing $\bbf_{|E}=\bfzero$. In addition, using the results in \cite{ADLP_HR}, we infer that $\Sigma^f_h(E)$ and $\Sigma^0_h(E)$ are completely characterised once the quantities $\bbc_i$ and $d_i$ are given, for $i=1,\ldots,n_E-1$. Therefore, the dimension of both the spaces is $3n_E-3$.  	
\end{remark}


\subsection{The local bilinear forms}\label{ss:E-bforms}

Given $E\in \Th$, we first notice that, for every $\bftau_h\in  \Sigma_h(E)$ and $\bbv_h\in U_h(E)$, the term

\begin{equation}\label{eq:div-ex}
\int_{\partial E} \bftau_h\bbn\cdot \bbv_h
\end{equation}
is computable from the knowledge of the degrees of freedom. Therefore, there is no need to introduce any approximation in the structure of the terms $b( \bftau^0, \bbu)$ and $b(\bfsigma^0, \bbv)$ in problem \eqref{cont-pbl}. Instead, the term

\begin{equation}
a_E(\bfsigma_h,\bftau_h)  = \int_E \D \bfsigma_h : \bftau_h
\end{equation}
is not computable for a general couple $(\bfsigma_h,\bftau_h)\in \Sigma_h(E)\times \Sigma_h(E)$. As usual in the VEM approach (see \cite{volley}, for instance), we then need to introduce a suitable approximation $a_E^h(\cdot,\cdot)$. We first define the projection operator

\begin{equation}\label{eq:proj}
\left\{
\begin{aligned}
& \Pi_E \, :\,  \Sigma_h(E)\to \P_\ast(E)^{2\times2}_s\\
& \bftau_h \mapsto \Pi_E\bftau_h\\
&a_E(\Pi_E\bftau_h,\bfpi)  = a_E(\bftau_h,\bfpi)\qquad \forall\bfpi \in \P_\ast(E)^{2\times2}_s,
\end{aligned}
\right.
\end{equation}
where $\P_\ast(E)^{2\times2}_s$ is a suitable space of polynomial symmetric tensors.
We then set

\begin{equation}\label{eq:ah1}
\begin{aligned}
a_E^h(\bfsigma_h,\bftau_h)  &=
a_E(\Pi_E\bfsigma_h,\Pi_E\bftau_h) + s_E\left( (Id-\Pi_E)\bfsigma_h, (Id-\Pi_E)\bftau_h \right)\\
&=\int_E \D (\Pi_E\bfsigma_h) : (\Pi_E\bftau_h)  + s_E\left( (Id-\Pi_E)\bfsigma_h, (Id-\Pi_E)\bftau_h \right) ,
\end{aligned}
\end{equation}
where $s_E(\cdot,\cdot)$ is a suitable stabilization term. We propose the following:
	
	\begin{equation}\label{eq:stab1}
	s_E(\bfsigma_h,\bftau_h) : = \kappa_E\, h_E\int_{\partial E} \bfsigma_h\bbn\cdot \bftau_h\bbn  ,
	\end{equation}
	where $\kappa_E$ is a positive constant to be chosen (for instance, any norm of $\D_{|E}$). A variant of \eqref{eq:stab1} is provided by
	
	\begin{equation}\label{eq:stab1bis}
	s_E(\bfsigma_h,\bftau_h) : = \kappa_E\, \sum_{e\in\partial E}h_e\int_{e} \bfsigma_h\bbn\cdot \bftau_h\bbn.
	\end{equation}

Morevover, we will make two different choices for $\P_\ast(E)^{2\times2}_s$, namely:

\begin{equation}\label{eq:pi0}
 \P_\ast(E)^{2\times2}_s=\P_0(E)^{2\times2}_s\qquad \mbox{(constant symmetric tensor functions)}
\end{equation}
and
\begin{equation}\label{eq:pi1}
 \P_\ast(E)^{2\times2}_s=\P_1(E)^{2\times2}_s\qquad \mbox{(linear symmetric tensor functions)}.
\end{equation}

\begin{remark}\label{rm:twoproj}
We remark that choice \eqref{eq:pi1} leads to a VEM projection onto a reacher space than choice \eqref{eq:pi0}. Although we do not have any improvement in the convergence rate, we nonetheless expect more accurate discrete solutions when selecting \eqref{eq:pi1} instead of \eqref{eq:pi0}. This behaviour is generally confirmed by the numerical experiments presented in Section \ref{s:numer}.
\end{remark}
%
%
	

		%


\subsection{The local term $a_E(\widehat{\bfsigma}_f,\bftau)$}\label{ss:E-load}

In order to discretise $F(\bftau)$, see \eqref{cont-pbl}, we need to consider the term, see \eqref{eq:cont-glob_forms}:

\begin{equation}\label{eq:fh}
a_E(\widehat{\bfsigma}_f,\bftau) .
\end{equation}
To this aim, we will proceed in two steps:

\begin{enumerate}
	\item we first choose $\widehat{\bfsigma}_{h,f}$, which may be considered as a suitable approximation of $\widehat{\bfsigma}_f$;
	\item we then consider a $\widetilde a_E^h(\cdot,\cdot)$, which may be considered as a suitable approximation of $a_E(\cdot,\cdot)$, and we finally form $\widetilde a_E^h(\widehat{\bfsigma}_{h,f},\bftau_h)$ to discretise \eqref{eq:fh}.	
\end{enumerate}

In particular, recalling that $\bbf_{|E}\in\P_0(E)$, we define $\widehat{\bfsigma}_{h,f}$ as:

\begin{equation}\label{eq:part-approx}
(\widehat{\bfsigma}_{h,f})_{|E} = -
\begin{pmatrix}
(\bbf_{|E})_1 (\bx - \bbx_C)_1& 0 \\
0 & (\bbf_{|E})_2 (\bbx - \bbx_C)_2
\end{pmatrix}.
\end{equation}
Above, for a given vector $\bbv$, $(\bbv)_i$ denotes its $i$-th component.
Furthermore, we set

\begin{equation}\label{eq:fhh}
\widetilde a_E^h(\widehat{\bfsigma}_{h,f},\bftau_h) :=
a_E(\widehat{\bfsigma}_{h,f},\Pi_E\bftau_h) .
\end{equation}

\begin{remark}\label{rem:altern-part}
	A different option could be to select $\widehat{\bfsigma}_{h,f}\in\Sigma^f_h(E)$, exploiting \eqref{eq:f3} and Remark \ref{rm:stress_space}. For instance, we may set:

\begin{equation}\label{eq:fh2}
\bbc_i =\bfzero \ , \quad d_i = 0 \qquad \forall\, i=1,\ldots,n_E-1 ,
\end{equation}
and compute $\bbc_{n_E}$ and $d_{n_E}$ according to \eqref{eq:f3}. However, other choices can be made (more balanced among the polygon edges). To approximate \eqref{eq:fh}, we could consider:

\begin{equation}\label{eq:fhh-alt}
\widetilde a_E^h(\widehat{\bfsigma}_{h,f},\bftau_h) :=
a_E^h(\widehat{\bfsigma}_{h,f},\bftau_h) ,
\end{equation}
where $a_E^h(\cdot,\cdot)$ is defined as in Section \ref{ss:E-bforms}.
\end{remark}

%
%


%


\subsection{The discrete scheme}\label{ss:discrete}

We are now ready to introduce the discrete scheme. We introduce global approximation spaces, see \eqref{eq:cont-spaces}, \eqref{eq:affine-sigma} and \eqref{eq:local_displ}:

\begin{equation}\label{eq:discr-spaces}
\left\lbrace{
	\begin{aligned}
	&\Sigma^f_h = \prod_ {E\in\Th} \Sigma^f_h(E), \\
	&U^0_h = \Big(\prod_ {E\in\Th} U_h(E)\Big)\cap H^1_0(\Omega)^2 ,
	\end{aligned}
} \right.
\end{equation}


%

Furthermore, given a local approximation of $a_E(\cdot,\cdot)$, see \eqref{eq:ah1}, we set

\begin{equation}\label{eq:global-ah}
a_h(\bfsigma_h,\bftau_h):= \sum_{E\in\Th}a_E^h(\bfsigma_h,\bftau_h) .
\end{equation}

%
%
%
%
The method we consider is then defined by

\begin{equation}\label{eq:discr-pbl-ls}
\left\lbrace{
	\begin{aligned}
	&\mbox{Find } (\bfsigma_h^0,\bbu_h)\in \Sigma^0_h\times U^0_h~\mbox{such that}\\
	&a_h(\bfsigma_h^0,\bftau_h^0) + b(\bftau_h^0, \bbu_h)= F_h(\bftau_h^0) \quad \forall \bftau_h\in \Sigma^0_h\\
	& b(\bfsigma_h^0, \bbv_h) = G_h(\bbv_h)\quad \forall \bbv_h\in U^0_h .
	\end{aligned}
} \right.
\end{equation}
Above, $F_h(\cdot)$ is given by, see \eqref{eq:cont-glob_forms}, \eqref{eq:part-approx} and \eqref{eq:fhh}:

\begin{equation}\label{eq:fh-glob}
F_h(\bftau_h) = -\sum_{E\in\Th} a_E(\widehat{\bfsigma}_{h,f},\Pi_E\bftau_h)
\end{equation}
while $G_h(\bbv_h)$ reads, see \eqref{eq:cont-glob_forms}:

\begin{equation}\label{eq:gh-glob}
G_h(\bbv_h) = \sum_{E\in\Th} \int_{\partial E}\widehat{\bfsigma}_{h,f}\bbn\cdot\bbv_h .
\end{equation}

Introducing the bilinear form $\A_h:(\Sigma_h^0\times U_h^0)\times (\Sigma_h^0\times U_h^0)\to \R$ defined by

\begin{equation}\label{eq:totbilinh}
\begin{aligned}
\A_h(\bfsigma_h^0,\bbu_h ;\bftau_h^0,\bbv_h):=
a_h(\bfsigma_h^0,\bftau_h^0)  + b( \bftau_h^0, \bbu_h)+b(\bfsigma_h^0, \bbv_h) ,
\end{aligned}
\end{equation}
problem \eqref{eq:discr-pbl-ls} can be written as

\begin{equation}\label{discr-pbl-cpt}
\left\lbrace{
	\begin{aligned}
	&\mbox{Find } (\bfsigma_h^0,\bbu_h)\in \Sigma_h^0\times U_h^0~\mbox{such that}\\
	&\A_h(\bfsigma_h^0,\bbu_h;\bftau_h^0,\bbv_h)= F_h(\bftau_h^0)+G(\bbv_h)\quad \forall (\bftau_h^0,\bbv_h)\in \Sigma_h^0\times U_h^0 .
	\end{aligned}
} \right.
\end{equation}



\section{Theoretical analysis using a mesh-dependent norm}\label{s:theoretical}

A significant part of the theoretical analysis follows the guidelines developed in \cite{ADLP_HR}. Hence, in many points we limit to state the results. However, we develop here an error analysis using a mesh-dependent norm for the stresses, which turns out to be a flexible tool in our case. In addition,
we remark that the {\em inf-sup} condition of Proposition \ref{pr:fortin-glob2} is new, and therefore its proof is given in full details.

 According to the assumption $\bbf_{|E}\in \P_0(E)$, we select the locally self-equilibrated solution $\widehat{\bfsigma}_{f}$ as

\begin{equation}\label{eq:part-const}
(\widehat{\bfsigma}_{f})_{|E} = -
\begin{pmatrix}
(\bbf_{|E})_1 (\bx - \bbx_C)_1& 0 \\
0 & (\bbf_{|E})_2 (\bbx - \bbx_C)_2
\end{pmatrix}.
\end{equation}
Therefore, we have $\widehat{\bfsigma}_{f}=\widehat{\bfsigma}_{h,f}$, see \eqref{eq:part-approx}. As a consequence, we get (cf. \eqref{eq:cont-glob_forms}, \eqref{eq:fh-glob} and \eqref{eq:gh-glob})

\begin{equation}\label{eq:magic-rhs}
\begin{aligned}
& F_h(\bftau_h) - F(\bftau_h) = \sum_{E\in\Th}\left(a_E(\widehat{\bfsigma}_{f},\bftau_h) -a_E(\widehat{\bfsigma}_{f},\Pi_E\bftau_h)\right) =
\sum_{E\in\Th}a_E(\widehat{\bfsigma}_{f}- \Pi_E \widehat{\bfsigma}_{f},\bftau_h) \\
& G_h(\bbv_h) - G(\bbv_h) = 0 .
\end{aligned}
\end{equation}

\subsection{Stability conditions}\label{ss:stability}
We introduce the following mesh-dependent quantity in $\Sigma_h := \prod_ {E\in\Th} \Sigma_h(E)$:

\begin{equation}\label{eq:meshdep}
||\bftau_h||_h^2 : = \sum_{E\in\Th} ||\bftau_h\bbn ||_{h,\partial E}^2 \qquad \mbox{ where
	$||\bftau_h\bbn ||_{h,\partial E} :=h_E^{1/2} ||\bftau_h\bbn ||_{0,\partial E}$} .
\end{equation}
It is easily seen that \eqref{eq:meshdep} defines a norm on $\Sigma_h$, see \eqref{eq:local_stress} along with \eqref{eq:div1} and \eqref{eq:div2}. Moreover, we have the following lemma.

\begin{lem}\label{lm:equiv_norm}
Under assumptions $\mathbf{(A1)}$ and $\mathbf{(A2)}$, it holds:

\begin{equation}\label{eq:normequiv}
||\bftau_h||_{0,\O}\lesssim ||\bftau_h||_h \lesssim ||\bftau_h||_{0,\O} \qquad \forall\,\bftau_h \in\Sigma_h.
\end{equation}

\end{lem}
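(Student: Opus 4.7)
Since both sides of \eqref{eq:normequiv} split as sums over the elements of $\Th$, it is enough to prove the local equivalence
\[
\|\bftau_h\|_{0,E}^2 \,\approx\, h_E\,\|\bftau_h\bbn\|_{0,\partial E}^2 \qquad \forall\,\bftau_h \in \Sigma_h(E),
\]
with constants depending only on the shape-regularity parameters $\gamma$ and $c$. I would proceed via the standard finite-dimensional scaling-and-compactness argument of the VEM literature.

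The space $\Sigma_h(E)$ is finite-dimensional: by Remark \ref{rm:stress_space}, $\dim\Sigma_h(E)=3n_E-3$, and under (A1)--(A2) the vertex count $n_E$ is bounded by a constant depending only on $\gamma,c$ (star-shapedness with respect to a ball of radius $\gamma h_E$, combined with the edge-length lower bound $c h_E$, yields $n_E\lesssim 1/(\gamma c)$ via a triangulation-from-the-kernel-point area estimate). Both $\|\cdot\|_{0,E}$ and $h_E^{1/2}\|\cdot\bbn\|_{0,\partial E}$ are genuine norms on $\Sigma_h(E)$: for the latter, if $\bftau_h\bbn\equiv\bfzero$ on $\partial E$, all boundary coefficients $\bbc_e,d_e$ of \eqref{eq:edge_approx} vanish, hence \eqref{eq:div1}--\eqref{eq:div2} force $\bdiv\bftau_h=\bfzero$; writing $\bftau_h=\C\teps(\bbw^\ast)$ and integrating by parts yields $\int_E\C\teps(\bbw^\ast):\teps(\bbw^\ast)=0$, so $\bbw^\ast$ is a rigid motion and $\bftau_h=\bfzero$.

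Next I rescale $E$ affinely onto a unit-diameter polygon $\hat E$ inheriting (A1)--(A2) with the same constants; the pullback maps $\Sigma_h(E)$ into $\Sigma_h(\hat E)$ (readily checked on the compatibility characterisation \eqref{eq:local_stress-alt}), and a routine change of variables gives $\|\bftau_h\|_{0,E}^2 = h_E^2\|\hat\bftau_h\|_{0,\hat E}^2$ and $h_E\|\bftau_h\bbn\|_{0,\partial E}^2 = h_E^2\|\hat\bftau_h\hat\bbn\|_{0,\partial\hat E}^2$. Both quantities scale identically in $h_E$, so their ratio is scale-invariant, and it is enough to prove equivalence on the reference configuration. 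Since $n_E$ takes only finitely many values and, for each fixed $n$, the set of unit-diameter $n$-gons satisfying (A1)--(A2) is compact modulo rigid motions, a continuity-of-norm-equivalence argument yields uniform bounds depending only on $\gamma$ and $c$.

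The main technical obstacle is this last compactness step, since $\Sigma_h(\hat E)$ is described implicitly through the auxiliary virtual displacement $\bbw^\ast$, making it awkward to exhibit a basis that varies continuously with the polygon. The cleanest remedy is to parametrise $\Sigma_h(\hat E)$ by the boundary degrees of freedom $(\bbc_i,d_i)_{i=1}^{n-1}$ of Lemma \ref{lm:sigmaf}: the equivalence between these DOFs and $\|\bftau_h\hat\bbn\|_{0,\partial\hat E}$ reduces to an elementary edge-by-edge polynomial statement, while equivalence with $\|\bftau_h\|_{0,\hat E}$ follows from a finite-dimensional continuity argument on the $(3n-3)$-dimensional DOF space.
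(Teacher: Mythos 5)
Your route is genuinely different from the paper's: you argue by finite-dimensionality, affine rescaling to a unit-diameter reference polygon, and compactness of the family of admissible polygons, whereas the paper proves the two inequalities directly. For the bound $\|\bftau_h\|_{0,E}\lesssim h_E^{1/2}\|\bftau_h\bbn\|_{0,\partial E}$ it invokes Lemma 5.1 of the companion paper (which gives $\|\bftau_h\|_{0,E}\lesssim h_E\|\bdiv\bftau_h\|_{0,E}+h_E^{1/2}\|\bftau_h\bbn\|_{0,\partial E}$) and then absorbs the divergence term using the fact that $\bdiv\bftau_h$ is a rigid-body motion, via integration by parts, the Agmon trace inequality and a polynomial inverse estimate. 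For the reverse bound it uses edge bubble functions and a 1D inverse estimate to get $h_E^{1/2}\|\bftau_h\bbn\|_{0,\partial E}\lesssim\|\bftau_h\bbn\|_{-1/2,\partial E}$, then the $H(\bdiv)$ trace estimate and an inverse estimate for $\bdiv\bftau_h$. All constants there come from quoted inequalities whose dependence on $\gamma,c$ is already tracked, so no compactness is needed.

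The gap in your argument sits exactly where you locate it, and your proposed remedy does not close it. Reduction to the local statement, the norm property of $h_E^{1/2}\|\cdot\,\bbn\|_{0,\partial E}$ (your rigid-motion argument for $\bbw^\ast$ is correct), the bound on $n_E$, and the scaling identities are all fine. But the uniformity over the compact family of reference polygons requires that the map sending (polygon, boundary DOFs) to $\|\bftau_h\|_{0,\hat E}$ be continuous, or at least appropriately semicontinuous, \emph{jointly} in both arguments. Given the DOFs, the element $\bftau_h=\C\teps(\bbw^\ast)$ with prescribed normal traces in $R(e)$ and the induced divergence in $RM(\hat E)$ is the stress of a local Neumann elasticity problem on $\hat E$; its $L^2$ norm therefore depends on the polygon through the solution operator of that boundary-value problem. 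Showing that this dependence is continuous under Hausdorff perturbations of the polygon (with the number of edges and the DOF identification varying consistently) is a domain-perturbation result for an elliptic system, not "an elementary edge-by-edge polynomial statement" nor a "finite-dimensional continuity argument on the DOF space": finite-dimensionality controls the dependence on the DOFs for a \emph{fixed} polygon, but says nothing about the dependence on the polygon itself, which is the whole point of the compactness step. Until that continuity is established, the compactness argument does not deliver constants depending only on $\gamma$ and $c$. This is precisely the difficulty that the paper's direct proof is designed to avoid, and one direction of the equivalence genuinely uses the structure of $\Sigma_h(E)$ (through the cited Lemma 5.1) rather than pure norm-equivalence on a finite-dimensional space.
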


\begin{proof}
Fix $E\in\Th$. Using Lemma 5.1 of \cite{ADLP_HR} we get

\begin{equation}\label{eq:first-equiv}
||\bftau_h||_{0,E}\lesssim h_E || \bdiv \bftau_h ||_{0,E} + h_E^{1/2}||\bftau_h\bbn ||_{0,\partial E}.
\end{equation}
Recalling that $\bdiv\bftau_h\in RM(E)$, an integration by parts, the Agmon inequality (see for instance \cite{Agmon}) and an inverse estimate on polygons (see Lemma 6.3 of \cite{BLRXX}) give:

\begin{equation}\label{eq:first-equiv2}
\begin{aligned}
|| \bdiv \bftau_h ||_{0,E}^2 &= \int_E \bdiv \bftau_h\cdot \bdiv \bftau_h =
\int_{\partial E}\bftau_h\bbn\cdot\bdiv\bftau_h\lesssim ||\bftau_h\bbn||_{0,\partial E}||\bdiv\bftau_h||_{0,\partial E}\\
& \lesssim ||\bftau_h\bbn||_{0,\partial E} \left( h_E^{-1/2} ||\bdiv\bftau_h||_{0,E} + |\bdiv\bftau_h|_{1,E}\right)\\
& \lesssim ||\bftau_h\bbn||_{0,\partial E}\, h_E^{-1/2} ||\bdiv\bftau_h||_{0,E} .
\end{aligned}
\end{equation}
Hence it holds

\begin{equation}\label{eq:first-equiv3}
|| \bdiv \bftau_h ||_{0,E}  \lesssim h_E^{-1/2} ||\bftau_h\bbn||_{0,\partial E}.
\end{equation}
Combining \eqref{eq:first-equiv} with \eqref{eq:first-equiv3} we obtain

\begin{equation}\label{eq:first-equiv4}
|| \bftau_h ||_{0,E}  \lesssim h_E^{1/2} ||\bftau_h\bbn||_{0,\partial E},
\end{equation}
from which the first estimate of \eqref{eq:normequiv} follows.

We now prove the second estimate in \eqref{eq:normequiv}.
For every edge $e$ in $E$, let us denote with $b_e$ the function defined on $\partial E$ such that: on $e$ it is the quadratic bubble with $\sup\,b_e =1$, and $b_e=0$ elsewhere. One has:

\begin{equation}\label{eq:second-equiv}
\begin{aligned}
|| \bftau_h\cdot\bbn||_{0,\partial E}^2&\lesssim \int_{\partial E} \bftau_h\cdot\bbn\cdot \big(\sum_{e\subset \partial E} b_e \bftau_h\cdot\bbn\big)
\leq ||\bftau_h\cdot\bbn\cdot||_{-1/2,\partial E}\, \big| \sum_{e\subset \partial E} b_e \bftau_h\cdot\bbn\big|_{1/2,\partial E}	\\
&\lesssim ||\bftau_h\cdot\bbn\cdot||_{-1/2,\partial E}\,  h_E^{-1/2} \big|\big| \sum_{e\subset \partial E} b_e \bftau_h\cdot\bbn\big|\big|_{0,\partial E}\\
&\lesssim h_E^{-1/2} ||\bftau_h\cdot\bbn||_{-1/2,\partial E}\, ||\bftau_h\cdot\bbn||_{0,\partial E} ,
\end{aligned}
\end{equation}
where we have also used a 1D inverse estimate. Hence, after exploiting a scaled trace inequality (cf. \cite{BLRXX}), we get

\begin{equation}\label{eq:second-equiv2}
h_E^{1/2}\, || \bftau_h\cdot\bbn||_{0,\partial E} \lesssim ||\bftau_h\cdot\bbn||_{-1/2,\partial E} \lesssim ||\bftau_h||_{0,E} + h_E ||\bdiv \bftau_h||_{0,E} .
\end{equation}
Now, using the technique of Lemma 6.3 in \cite{BLRXX}, we obtain

$$||\bdiv \bftau_h||_{0,E}\lesssim h_E^{-1}|| \bftau_h||_{0,E}.$$
Therefore, we have:

\begin{equation}\label{eq:second-equiv3}
h_E^{1/2}\, || \bftau_h\cdot\bbn||_{0,\partial E} \lesssim ||\bftau_h||_{0,E} ,
\end{equation}
from which the second inequality in \eqref{eq:normequiv} follows.

\end{proof}

As it is well-known (see for instance \cite{Bo-Bre-For} or \cite{Braess:book}), stability for problems with the format as in \eqref{eq:totbilinh}, is implied by the satisfaction of two conditions: the {\em ellipticity-on-the-kernel} condition, and the {\em inf-sup} condition. As far as the first one is concerned, we first prove that it holds:

\begin{equation}
\label{eq:ellkerh}
||\bftau_h||_\Sigma^2 \lesssim a_h(\bftau_h,\bftau_h)\qquad \forall\,\bftau_h\in K_h.
\end{equation}
Above $K_h\subseteq \Sigma^0_h$ is the discrete kernel, defined by:

\begin{equation}\label{eq:kernelh0.1}
K_h = \Big\{ \bftau_h\in\Sigma^0_h\, :\, b(\bftau_h,\bbv_h)=0\quad \forall \bbv_h\in U^0_h    \Big\} .
\end{equation}

Estimate \eqref{eq:ellkerh} is a consequence of the following stronger result.

\begin{lem}\label{lm:elker}
Suppose that  assumptions $\mathbf{(A1)}$ and $\mathbf{(A2)}$ are fulfilled. Then it holds:

\begin{equation}
\label{eq:ellkerh0.2}
||\bftau_h||_\Sigma^2 = || \bftau_h||_{0,\O}^2\lesssim a_h(\bftau_h,\bftau_h)\qquad \forall\,\bftau_h\in \Sigma^0_h.
\end{equation}	
	
\end{lem}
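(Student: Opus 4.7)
The equality $\|\bftau_h\|_\Sigma^2 = \|\bftau_h\|_{0,\Omega}^2$ is immediate from the definition \eqref{eq:sigma-norm}, since every $\bftau_h\in \Sigma_h^0$ satisfies $\bdiv\bftau_h=\bfzero$ on each element. The task therefore reduces to bounding the global $L^2$ norm by $a_h(\bftau_h,\bftau_h)$, and the plan is to pass through the mesh-dependent norm $\|\cdot\|_h$ introduced in \eqref{eq:meshdep}. By Lemma \ref{lm:equiv_norm}, it suffices to establish a local estimate of the form
\begin{equation*}
h_E \|\bftau_h\bbn\|_{0,\partial E}^2 \lesssim a_E^h(\bftau_h,\bftau_h) \qquad \forall\, E\in\Th.
\end{equation*}
Summing on $E$ and invoking the equivalence \eqref{eq:normequiv} will then give the result.

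To obtain the local estimate, I would split $\bftau_h = \Pi_E\bftau_h + (Id-\Pi_E)\bftau_h$ and use the triangle inequality on $\partial E$. The contribution from $(Id-\Pi_E)\bftau_h$ is controlled directly by the stabilization term, since
\begin{equation*}
h_E\|(Id-\Pi_E)\bftau_h\,\bbn\|_{0,\partial E}^2 \lesssim \frac{1}{\kappa_E}\, s_E\bigl((Id-\Pi_E)\bftau_h,(Id-\Pi_E)\bftau_h\bigr),
\end{equation*}
for either choice \eqref{eq:stab1} or \eqref{eq:stab1bis} (using $h_e\approx h_E$ under (A2)). For the projection part, I would use the fact that $\Pi_E\bftau_h$ is a polynomial of fixed degree on $E$: a scaled polynomial trace-inverse inequality (the same type exploited in Lemma 6.3 of \cite{BLRXX}) yields
\begin{equation*}
h_E\|\Pi_E\bftau_h\,\bbn\|_{0,\partial E}^2 \lesssim \|\Pi_E\bftau_h\|_{0,E}^2.
\end{equation*}
The coercivity of the compliance tensor $\D$ then gives $\|\Pi_E\bftau_h\|_{0,E}^2 \lesssim a_E(\Pi_E\bftau_h,\Pi_E\bftau_h)$. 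Combining these three ingredients and recalling the definition \eqref{eq:ah1} of $a_E^h$ produces the required local bound.

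The only place where some care is needed is to make sure that all the hidden constants are uniform in the mesh: the trace-inverse inequality for polynomials on $E$, and the equivalence in Lemma \ref{lm:equiv_norm}, both rely on the shape-regularity assumptions $\mathbf{(A1)}$ and $\mathbf{(A2)}$. This is the main technical point, but no new argument beyond what is already available in \cite{BLRXX,ADLP_HR} is required. Once assembled, these steps yield the desired coercivity of $a_h(\cdot,\cdot)$ on the whole of $\Sigma_h^0$, which is strictly stronger than the ellipticity-on-the-kernel property \eqref{eq:ellkerh}.
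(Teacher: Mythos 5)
Your argument is correct and follows essentially the same route as the paper: the paper's proof simply invokes the norm equivalence of Lemma \ref{lm:equiv_norm} together with ``standard VEM arguments,'' and your local bound $h_E\|\bftau_h\bbn\|_{0,\partial E}^2\lesssim a_E^h(\bftau_h,\bftau_h)$ (split into projected part, handled by a polynomial inverse trace inequality and the coercivity of $\D$, plus remainder, handled by the stabilization) is precisely the standard argument spelled out for this stabilization. The only difference is that the paper also records the reverse bound $a_h(\bftau_h,\bftau_h)\lesssim\|\bftau_h\|_{0,\O}^2$, which is not needed for the stated inequality.
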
	

\begin{proof}
By the norm definition \eqref{eq:sigma-norm} and the space definition \eqref{eq:affine-sigma}, we immediately infer

\begin{equation}\label{eq:ellkerh1}
||\bftau_h||_\Sigma = || \bftau_h||_{0,\O} \qquad \forall\, \bftau_h\in\Sigma_h^0.
\end{equation}
Using \eqref{eq:normequiv} and standard VEM arguments as in \cite{volley}, we get

\begin{equation}\label{eq:ellkerh2}
|| \bftau_h||_{0,\O}^2\lesssim a_h(\bftau_h,\bftau_h)\lesssim  || \bftau_h||_{0,\O}^2 \qquad \forall\, \bftau_h\in\Sigma_h^0.
\end{equation}
Estimate \eqref{eq:ellkerh} follows from \eqref{eq:ellkerh1} and \eqref{eq:ellkerh2}.

\end{proof}

From Lemma \ref{lm:elker} and \eqref{eq:normequiv} we infer the coercivity property:

\begin{equation}
\label{eq:ellker-nd}
||\bftau_h||_h^2 \lesssim a_h(\bftau_h,\bftau_h)\qquad \forall\,\bftau_h\in \Sigma^0_h.
\end{equation}	

The following lemma will be useful to prove the {\em inf-sup} condition.

\begin{lem}\label{lm:midpoint-norm}
Suppose that assumptions $\mathbf{(A1)}$ and $\mathbf{(A2)}$ are fulfilled, and fix $E\in\Th$. Take any $\bbw\in E$, node of $\Th$. Then it holds:

\begin{equation}\label{eq:mp1}
h_E \left( |\bbv_h(\bbw)|^2 + \sum_{\bbm\in\Mh(E)} |\bbv_h(\bbm)|^2 \right) \gtrsim ||\bbv_h||^2_{0,\partial E}
\qquad \forall \bbv_h\in U_h(E),
\end{equation} 	
where $\Mh(E)$ is the set of edge mid-points of $E$.

\end{lem}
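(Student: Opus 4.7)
The heart of the matter is that on each edge $e_i\subset\partial E$ the function $\bbv_h$ is linear, while on the skeleton of $\partial E$ the midpoint values and one vertex value suffice to recover $\bbv_h$ on every vertex by a short recursion. Label the vertices $V_0,V_1,\ldots,V_{n_E-1}=V_0$ cyclically so that $\bbw=V_0$ and the edge $e_i$ joins $V_{i-1}$ to $V_i$ with midpoint $M_i\in\Mh(E)$. Since $\bbv_h|_{e_i}\in\P_1(e_i)^2$, one has $\bbv_h(M_i)=\tfrac12\bigl(\bbv_h(V_{i-1})+\bbv_h(V_i)\bigr)$, hence $\bbv_h(V_i)=2\bbv_h(M_i)-\bbv_h(V_{i-1})$.

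Iterating this identity from $V_0=\bbw$, I obtain $\bbv_h(V_i)=(-1)^i\bbv_h(\bbw)+2\sum_{j=1}^{i}(-1)^{i-j}\bbv_h(M_j)$, so that
\begin{equation*}
|\bbv_h(V_i)|^2\lesssim |\bbv_h(\bbw)|^2+n_E\sum_{\bbm\in\Mh(E)}|\bbv_h(\bbm)|^2.
\end{equation*}
Under assumptions $\mathbf{(A1)}$--$\mathbf{(A2)}$ the number of vertices $n_E$ is uniformly bounded (the perimeter of $E$ is comparable to $h_E$ by star-shapedness, while vertex separation forces $n_E\lesssim 1$), so the factor $n_E$ is absorbed into the hidden constant.

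Next, a direct one-dimensional computation for a vector-valued linear polynomial on an interval of length $h_{e_i}$ gives the classical equivalence
\begin{equation*}
\|\bbv_h\|_{0,e_i}^2\lesssim h_{e_i}\bigl(|\bbv_h(V_{i-1})|^2+|\bbv_h(V_i)|^2\bigr).
\end{equation*}
Using $h_{e_i}\le h_E$, summing over $i=1,\ldots,n_E$, and inserting the bound on each $|\bbv_h(V_i)|^2$ obtained above produces
\begin{equation*}
\|\bbv_h\|_{0,\partial E}^2\lesssim h_E\, n_E\Bigl(|\bbv_h(\bbw)|^2+\sum_{\bbm\in\Mh(E)}|\bbv_h(\bbm)|^2\Bigr),
\end{equation*}
which is exactly \eqref{eq:mp1} after once more absorbing $n_E$ into the constant.

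The argument is essentially elementary; the only technical point to handle with care is the uniform bound on $n_E$, which relies on combining $\mathbf{(A1)}$ (to control $|\partial E|\lesssim h_E$) with $\mathbf{(A2)}$ (to force a minimum spacing between consecutive vertices). Everything else is a one-edge linear-polynomial calculation plus the midpoint recursion, and no further ingredients from the paper are needed.
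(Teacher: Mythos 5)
Your proof is correct and follows essentially the same route as the paper: both rely on the midpoint recursion $\bbv_h(V_i)=2\bbv_h(M_i)-\bbv_h(V_{i-1})$ to express every vertex value through $\bbv_h(\bbw)$ and the midpoint values, together with the uniform bound on $n_E$ from $\mathbf{(A1)}$--$\mathbf{(A2)}$. The only cosmetic difference is that the paper converts $\|\bbv_h\|_{0,\partial E}^2$ into nodal and midpoint values via the Cavalieri--Simpson rule, whereas you use the equivalent endpoint-value bound for linear polynomials on each edge; both are the same elementary one-dimensional fact.
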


\begin{proof} Denote with $\{ \bbw=\bbw_1, \bbw_2 \ldots, \bbw_{n_E} \}$ the set of nodes for $\Th$ on $\partial E$, ordered counter-clockwise. Furthermore, let $e_i = [\bbw_i, \bbw_{i+1}]$ ($i=1,\ldots,n_E$) be the edges of $E$ (here we have set $\bbw_{n_E+1}=\bbw_1$), and let $\bbm_i$ be the midpoint of $e_i$.
Fix $\bbv_h\in U_h(E)$; by using the Cavalieri-Simpson rule, we get
	
	\begin{equation}\label{eq:mp2}
	\int_{\partial E} |\bbv_h|^2 =
	\sum_{i=1}^{n_E} \frac{|e_i|}{6}\, \left[ |\bbv_h(\bbw_i)|^2 + 4\,|\bbv_h(\bbm_i)|^2  +
	|\bbv_h(\bbw_{i+1})|^2
	\right] .
	\end{equation}
	Due to assumption $\mathbf{(A2)}$, we get
	
	\begin{equation}\label{eq:mp3}
	\begin{aligned}
	\int_{\partial E} |\bbv_h|^2 &\approx
	h_E \sum_{i=1}^{n_E} \left[ |\bbv_h(\bbw_i)|^2 + 4\,|\bbv_h(\bbm_i)|^2  +
	|\bbv_h(\bbw_{i+1})|^2
	\right] \\
	&\approx h_E\left(
	\sum_{i=1}^{n_E} |\bbv_h(\bbw_i)|^2 + \sum_{i=1}^{n_E} |\bbv_h(\bbm_i)|^2
	\right) .
	\end{aligned}
	\end{equation}
We now notice that, since each component of $\bbv_h$ is a piecewise linear and continuous function on $\partial E$, it follows that for $i=2,\ldots,n_E$, the quantity $\bbv_h(\bbw_i)$ is uniquely determined by $\bbv_h(\bbw_1)$ and $\{ \bbv_h(\bbm_1),\ldots, \bbv_h(\bbm_{i-1}) \}$. Indeed, a direct computation shows that

$$
\bbv_h(\bbw_i) = 2\sum_{k=1}^{i-1} (-1)^{i-1-k} \bbv_h(\bbm_k) + (-1)^{i-1} \bbv_h(\bbw_1) \qquad i=2,\ldots,n_E .
$$
Hence, assumption $\mathbf{(A1)}$ and $\mathbf{(A2)}$ implies that

\begin{equation}\label{eq:mp4}
| \bbv_h(\bbw_i) |^2 \lesssim  |\bbv_h(\bbw_1)|^2 + \sum_{i=1}^{n_E} |\bbv_h(\bbm_i)|^2
\qquad i=1,\ldots,n_E .
\end{equation}
Estimate \eqref{eq:mp1} now follows from a combination of \eqref{eq:mp3} and \eqref{eq:mp4}.
	
\end{proof}

\begin{remark}\label{rm:evenodd}
It is easy to see that, if $n_E$ is odd, then the values $\bbv_h(\bbw_i)$ ($1\le i \le n_E$) can be determined without using $\bbv_h(\bbw)$, but only the midpoint values $\{ \bbv_h(\bbm_1),\ldots, \bbv_h(\bbm_{n_E}) \}$.  	
\end{remark}

We now prove the following local {\em inf-sup} condition.

\begin{lem}\label{lm:inf-sup-loc}
Suppose that assumptions $\mathbf{(A1)}$ and $\mathbf{(A2)}$ are fulfilled. Then, there exists $\beta>0$ such that
	
	\begin{equation}\label{eq:inf-sup-loc}
	\sup_{\bftau_h\in \Sigma_h(E)}\frac{b_E( \bftau_h,\bbv_h)}{|| \bftau_h\bbn||_{0,\partial E}}\ge \beta\, ||\bbv_h||_{0,\partial E}\qquad \forall\, \bbv_h\in U_h(E) ,
	\end{equation}
where the bilinear form $b_E(\cdot,\cdot)$ is defined by (cf. \eqref{eq:cont-glob_forms}):

\begin{equation}\label{eq:inf-sup-loc2}
b_E( \bftau,\bbv)= -\int_{\partial E}\bftau\bbn\cdot\bbv \qquad \forall\, (\bftau,\bbv)\in \Sigma(E)\times U(E) .	
\end{equation}

\end{lem}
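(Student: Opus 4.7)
The approach is to take as test stress $\bftau_h \in \Sigma_h(E)$ the preimage (under the trace map) of the $L^2(\partial E)$-orthogonal projection of $\bbv_h|_{\partial E}$ onto $\prod_{e\subset \partial E} R(e)$, and then to show that this projection retains a uniform fraction of $\|\bbv_h\|_{0,\partial E}$.

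First I note that the trace map $\bftau_h \mapsto \bftau_h \bbn$ is a bijection from $\Sigma_h(E)$ onto $\prod_{e\subset\partial E} R(e)$: given any edge data $\bfvarphi \in \prod_e R(e)$, one constructs $\bftau_h = \C\teps(\bbw^*)$ by solving the local elasticity problem on $E$ with tractions $\bfvarphi$ and body force equal to the unique $\bbr \in RM(E)$ balancing $\bfvarphi$ against $RM(E)$. This is consistent with $\dim \Sigma_h(E) = 3n_E$, three more than $\dim \Sigma^0_h(E) = 3n_E - 3$ (Remark \ref{rm:stress_space}). Writing $\bbv_h|_{e_i}(s) = \bbv_h(\bbm_i) + s\,\bbd_i$ with $s \in [-1/2, 1/2]$ and slope $\bbd_i\in\R^2$, a direct computation identifies the $L^2$-projection $P_R\bbv_h|_{e_i}(s) = \bbv_h(\bbm_i) + s\,(\bbn_i\cdot\bbd_i)\,\bbn_i$ and yields
\[
\|P_R\bbv_h\|_{0,\partial E}^2 = A + B, \quad A := \sum_i |e_i|\,|\bbv_h(\bbm_i)|^2, \quad B := \tfrac{1}{12}\sum_i |e_i|\,(\bbn_i\cdot\bbd_i)^2.
\]

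Choosing $\bftau_h \in \Sigma_h(E)$ with $\bftau_h\bbn = -P_R\bbv_h$, the $L^2$-orthogonality of the projection gives $b_E(\bftau_h, \bbv_h) = \|P_R\bbv_h\|_{0,\partial E}^2$ and $\|\bftau_h\bbn\|_{0,\partial E} = \|P_R\bbv_h\|_{0,\partial E}$, so that $b_E(\bftau_h,\bbv_h)/\|\bftau_h\bbn\|_{0,\partial E} = \|P_R\bbv_h\|_{0,\partial E}$. The lemma thus reduces to showing $\|P_R\bbv_h\|_{0,\partial E} \gtrsim \|\bbv_h\|_{0,\partial E}$ uniformly in $h$ and in the polygon shape, within $\mathbf{(A1)}$ and $\mathbf{(A2)}$.

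The main obstacle is precisely this uniform lower bound on the projection. The deficit $\bbv_h|_{e_i} - P_R\bbv_h|_{e_i} = s\,(\bbt_i\cdot\bbd_i)\,\bbt_i$ is tangential and odd in $s$, and qualitative injectivity is clear: if $P_R\bbv_h = 0$ then $\bbv_h|_{e_i}(s) = s\,(\bbt_i\cdot\bbd_i)\,\bbt_i$, and continuity of $\bbv_h$ at each vertex (whose two adjacent edge tangents are linearly independent under $\mathbf{(A1)}$ and $\mathbf{(A2)}$) forces $\bbv_h\equiv 0$ on $\partial E$. The quantitative version then follows from finite-dimensional norm equivalence, made uniform in the shape-regular regime by a standard compactness/scaling argument. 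An alternative route is to combine Lemma \ref{lm:midpoint-norm} with a bound of $|\bbv_h(\bbw)|^2$ via the normal jumps encoded in $B$: at any vertex $\bbw$ the two adjacent edge normals span $\R^2$ with Gram determinant bounded below by a constant depending only on $\gamma$ and $c$, which yields $h_E\,|\bbv_h(\bbw)|^2 \lesssim A + B$ after accounting for midpoint contributions. For odd $n_E$, Remark \ref{rm:evenodd} sidesteps the vertex term entirely and the proof shortens to the immediate estimate $\|\bbv_h\|_{0,\partial E}^2 \lesssim A$.
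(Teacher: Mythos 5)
Your proposal is essentially the paper's proof. The test function you obtain from the $L^2(\partial E)$-orthogonal projection $P_R\bbv_h$ is exactly the choice \eqref{eq:l2.3} in the paper (full normal component, tangential component frozen at the edge midpoint), the identity $b_E(\bftau_h,\bbv_h)=\|P_R\bbv_h\|_{0,\partial E}^2=\|\bftau_h\bbn\|_{0,\partial E}^2$ reproduces \eqref{eq:l2.4}--\eqref{eq:l2.10}, and your ``alternative route'' for the key lower bound $\|P_R\bbv_h\|_{0,\partial E}\gtrsim\|\bbv_h\|_{0,\partial E}$ --- recovering a vertex value from the two adjacent normal components and then invoking Lemma \ref{lm:midpoint-norm} --- is precisely the paper's argument. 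One correction is needed: you assert that \emph{at any vertex} the two adjacent edge normals have Gram determinant bounded below by a constant depending only on $\gamma$ and $c$. This is false: assumptions $\mathbf{(A1)}$--$\mathbf{(A2)}$ allow vertices whose interior angle is arbitrarily close to $\pi$ (hanging nodes), where the two normals are nearly parallel and the vertex value cannot be recovered from its two normal components. What is true, and what the paper actually uses, is that \emph{some} vertex $\bbw_m$ has uniformly non-parallel adjacent normals (the number of edges is uniformly bounded under $\mathbf{(A2)}$ and the exterior angles sum to $2\pi$, so at least one turning angle is bounded away from zero), and a single good vertex is all that Lemma \ref{lm:midpoint-norm} requires. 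With that fix your second route is complete; your first route via finite-dimensional norm equivalence and a compactness/scaling argument over the shape-regular class is also viable in principle, but it is only sketched and is not the paper's path.
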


\begin{proof}
Fix $\bbv_h\in U_h(E)$ and choose $\bftau_h\in\Sigma_h(E)$ such that:

\begin{equation}\label{eq:l2.3}
(\bftau_h\bbn\cdot\bbn)_{|e} = - (\bbv_h\cdot\bbn)_{|e}\ \, \qquad
(\bftau_h\bbn\cdot\bbt)_{|e} = - (\bbv_h\cdot\bbt)(\bbm_e) \qquad \forall\, e\in\Eh .
\end{equation}
Due to \eqref{eq:local_stress}, the above choice is admissible. Thus, using also the mid-point rule, we get:

\begin{equation}\label{eq:l2.4}
b_E(\bftau_h,\bbv_h) =
\int_{\partial E} |\bbv_h\cdot\bbn|^2 +
\sum_{e\in\Eh(E)} |e|\, |(\bbv_h\cdot\bbt)(\bbm_e)|^2 .
\end{equation}
Applying to $\bbv_h\cdot\bbn$ the same argument as in \eqref{eq:mp2}-\eqref{eq:mp3}, from \eqref{eq:l2.4} we infer

\begin{equation}\label{eq:l2.5}
b_E(\bftau_h,\bbv_h) \approx
h_E\left(
\sum_{i=1}^{n_E}\left[ |(\bbv_h\cdot\bbn_i^-)(\bbw_i)|^2 +
|(\bbv_h\cdot\bbn_i^+)(\bbw_i)|^2\right]+ \sum_{i=1}^{n_E} |\bbv_h(\bbm_i)|^2
\right)  .
\end{equation}
Above, $\bbn_i^-$ and $\bbn_i^+$ are the two normals to the edges which share $\bbw_i$ as a common vertex. We now notice that, due to assumptions $\mathbf{(A1)}$ and $\mathbf{(A2)}$, there exists a node $\bbw_m$ ($1\le m \le n_E$), for which $ |(\bbv_h\cdot\bbn_m^-)(\bbw_m)|^2 +
|(\bbv_h\cdot\bbn_m^+)(\bbw_m)|^2 \approx |\bbv_h(\bbw_m)|^2$. Therefore, from \eqref{eq:l2.5} we get

\begin{equation}\label{eq:l2.5bis}
b_E(\bftau_h,\bbv_h) \gtrsim
h_E\left(
|\bbv_h(\bbw_m)|^2 + \sum_{\bbm\in\Mh(E)} |\bbv_h(\bbm)|^2
\right)  .
\end{equation}

Applying Lemma \ref{lm:midpoint-norm} we thus infer

\begin{equation}\label{eq:l2.9}
b_E(\bftau_h,\bbv_h) \gtrsim || \bbv_h ||_{0,\partial E}^2 .
\end{equation}
Furthermore, from \eqref{eq:l2.3} we immediately get

\begin{equation}\label{eq:l2.10}
 || \bftau_h\bbn ||_{0,\partial E}\lesssim  || \bbv_h ||_{0,\partial E} .
\end{equation}
Estimate \eqref{eq:inf-sup-loc} is now a consequence of \eqref{eq:l2.9} and \eqref{eq:l2.10}.

\end{proof}

Introducing the local discrete kernel $K_h(\partial E) $ defined by:

\begin{equation}\label{eq:kernelh-loc}
K_h(\partial E) = \Big\{ \bftau_h\bbn_{|\partial E} \ :\ \bftau_h\in\Sigma_h(E)\, ,\ b_E(\bftau_h,\bbv_h)=0\quad \forall \bbv_h\in U_h(E)    \Big\} ,
\end{equation}
Lemma \ref{lm:inf-sup-loc} implies the following result (cf. \cite{Bo-Bre-For}).

\begin{lem}\label{lm:inf-sup-loc2}
	Suppose that  assumptions $\mathbf{(A1)}$ and $\mathbf{(A2)}$ are fulfilled. Then, there exists $\beta>0$ such that
	
	\begin{equation}\label{eq:inf-sup-loc-2}
	\sup_{\bbv_h\in U_h(E)}\frac{b_E( \bftau_h,\bbv_h)}{|| \bbv_h||_{0,\partial E}}\ge \beta\, ||\bftau_h\bbn||_{L^2(\partial E)^{2}/K_h(\partial E)}\qquad \forall\, \bftau_h\in \Sigma_h(E) .
	\end{equation}

\end{lem}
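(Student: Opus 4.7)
The statement is the transposed version of the local inf-sup condition proved in Lemma \ref{lm:inf-sup-loc}, so my plan is to derive it by a standard duality argument rather than by constructing $\bbv_h$ directly from $\bftau_h$. In the finite-dimensional setting of the local spaces $\Sigma_h(E)$ and $U_h(E)$ the argument is nothing more than the closed range theorem (or, equivalently, a rank-nullity statement for the matrix representing $b_E$), and this is precisely the content of Proposition 4.3.4 of \cite{Bo-Bre-For}. I would therefore cite that reference after making explicit how the present hypothesis matches its setting.

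The setup I would use is as follows. Consider the trace map $T:\Sigma_h(E)\to L^2(\partial E)^2$, $T\bftau_h=(\bftau_h\bbn)_{|\partial E}$, whose image is a finite-dimensional subspace $X_h$ of $L^2(\partial E)^2$. Restrict the $L^2(\partial E)^2$ pairing to $b_E(\bftau_h,\bbv_h)=-\int_{\partial E}T\bftau_h\cdot\bbv_h$ on $X_h\times U_h(E)_{|\partial E}$, and observe that the definition \eqref{eq:kernelh-loc} of $K_h(\partial E)$ is exactly the left kernel of $b_E$. Hence $b_E$ induces a nondegenerate bilinear form on $(X_h/K_h(\partial E))\times U_h(E)_{|\partial E}$, and the two associated linear operators are transposes of each other.

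Lemma \ref{lm:inf-sup-loc} is precisely the assertion that the second of these operators, $\bbv_h\mapsto b_E(\cdot,\bbv_h)$, is bounded from below by $\beta$ in the norms $||\bftau_h\bbn||_{0,\partial E}$ and $||\bbv_h||_{0,\partial E}$. Equivalently, it says that this operator is injective. In a finite-dimensional setting, an operator $L:V\to W$ is bounded below by $\beta$ if and only if its transpose $L^\ast:W^\ast\to V^\ast$ is surjective onto the range and admits a right inverse of norm at most $1/\beta$ (this is simply the statement that the smallest nonzero singular value is preserved under transposition). Applying this equivalence to the operator above, restricted to $X_h/K_h(\partial E)$, yields \eqref{eq:inf-sup-loc-2} with the same constant $\beta$.

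I do not anticipate any real technical obstacle: the work has already been done in Lemma \ref{lm:inf-sup-loc}, and the passage to the dual inequality is a routine linear-algebra manipulation that does not require any new estimate, so the proof reduces essentially to pointing out the relevant equivalence in \cite{Bo-Bre-For}. The only place where care is needed is to keep track of the quotient by $K_h(\partial E)$ on the stress side, which is forced by the fact that Lemma \ref{lm:inf-sup-loc} is stated for \emph{all} $\bbv_h\in U_h(E)$ (so there is no quotient on the displacement side), while $b_E$ is in general degenerate on $X_h$.
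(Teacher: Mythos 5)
Your argument is correct and coincides with what the paper intends: the paper gives no proof of this lemma, stating only that it follows from Lemma \ref{lm:inf-sup-loc} via the standard finite-dimensional equivalence of the two transposed inf-sup conditions in \cite{Bo-Bre-For}, which is exactly the duality/closed-range argument you spell out (including the correct observation that the quotient appears only on the stress side because the right kernel is trivial). No gap.
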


We are now ready to prove the Proposition:

\begin{prop}\label{pr:loal-comp}
Under assumptions $\mathbf{(A1)}$ and $\mathbf{(A2)}$, there exists a linear operator $\pi_E\, :\, \Sigma(E)\to \Sigma_h(E)$ such that:

\begin{equation}\label{eq:com-loc}
\begin{aligned}
&b_E(\pi_E\,\bftau,\bbv_h)= b_E(\bftau,\bbv_h)\qquad \forall\, \bftau\in\Sigma(E)\, , \ \forall\, \bbv_h\in U_h(E),\\
& || (\pi_E\,\bftau)\bbn||_{h,\partial E}\lesssim ||\bftau||_{\Sigma(E)}.
\end{aligned}
\end{equation}

\end{prop}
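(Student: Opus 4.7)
The plan is a classical Fortin-type construction, using Lemma \ref{lm:inf-sup-loc} together with its companion Lemma \ref{lm:inf-sup-loc2} to invert $b_E$ on the trace space induced by $U_h(E)$ and produce a stable preimage in $\Sigma_h(E)$. Since $b_E(\bftau_h,\bbv_h)$ only depends on $\bbv_h|_{\partial E}$, the codomain is effectively the finite-dimensional trace space; Lemma \ref{lm:inf-sup-loc2} then gives injectivity of the induced map $\bftau_h\mapsto b_E(\bftau_h,\cdot)$ modulo its local kernel, Lemma \ref{lm:inf-sup-loc} gives injectivity of its transpose, and in finite dimensions the two together deliver an isomorphism with continuous inverse.

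Given $\bftau\in\Sigma(E)$, I form the linear functional $\ell_\bftau(\bbv_h):=b_E(\bftau,\bbv_h)$ on $U_h(E)$ and invoke this isomorphism to produce $\bftau_h\in\Sigma_h(E)$ with $b_E(\bftau_h,\bbv_h)=\ell_\bftau(\bbv_h)$ for every $\bbv_h\in U_h(E)$ and
\[
||\bftau_h\,\bbn||_{0,\partial E}\lesssim \sup_{\bbv_h\in U_h(E)}\frac{|b_E(\bftau,\bbv_h)|}{||\bbv_h||_{0,\partial E}}.
\]
Selecting in each fiber the minimum-$||\cdot\,\bbn||_{0,\partial E}$ representative makes the assignment $\bftau\mapsto\bftau_h=:\pi_E\bftau$ linear, and the first line of \eqref{eq:com-loc} is then satisfied by construction.

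It remains to bound the supremum above by $h_E^{-1/2}||\bftau||_{\Sigma(E)}$. Fix $\bbv_h\in U_h(E)$; since $\bbv_h\in H^1(E)^2$ and $\bftau$ is symmetric, integration by parts with any $H^1(E)^2$ extension $\widetilde{\bbv}_h$ of $\bbv_h|_{\partial E}$ gives
\[
\int_{\partial E}\bftau\,\bbn\cdot\bbv_h = \int_E \bdiv\bftau\cdot\widetilde{\bbv}_h + \int_E \bftau:\teps(\widetilde{\bbv}_h).
\]
Using classical extension results on star-shaped polygons satisfying $\mathbf{(A1)}$ and $\mathbf{(A2)}$, $\widetilde{\bbv}_h$ can be chosen so that $||\widetilde{\bbv}_h||_{0,E}\lesssim h_E^{1/2}||\bbv_h||_{0,\partial E}$ and $||\teps(\widetilde{\bbv}_h)||_{0,E}\lesssim h_E^{-1/2}||\bbv_h||_{0,\partial E}$. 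Cauchy-Schwarz and $h_E\lesssim 1$ then yield
\[
\Big|\int_{\partial E}\bftau\,\bbn\cdot\bbv_h\Big|\lesssim h_E^{-1/2}\bigl(||\bftau||_{0,E}+h_E\,||\bdiv\bftau||_{0,E}\bigr)||\bbv_h||_{0,\partial E}\lesssim h_E^{-1/2}||\bftau||_{\Sigma(E)}||\bbv_h||_{0,\partial E}.
\]
Multiplying the resulting estimate on $||\pi_E\bftau\,\bbn||_{0,\partial E}$ by $h_E^{1/2}$ delivers $||(\pi_E\bftau)\,\bbn||_{h,\partial E}\lesssim ||\bftau||_{\Sigma(E)}$.

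The main obstacle is producing the extension $\widetilde{\bbv}_h$ with the two scaling bounds above on a generic star-shaped polygonal element; this is where the geometric assumptions $\mathbf{(A1)}$-$\mathbf{(A2)}$ are crucially used, and it is precisely what delivers the $h_E^{-1/2}$ factor that makes the mesh-dependent norm $||\cdot||_{h,\partial E}$ natural for the stability bound. Everything else (the inf-sup-based inversion, the integration by parts, and the final algebra) is a routine assembly of the mixed-method machinery laid down in the earlier lemmas.
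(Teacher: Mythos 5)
Your proposal is correct and follows essentially the same route as the paper's proof: solvability of the first equation via Lemma~\ref{lm:inf-sup-loc}, selection of the minimum-norm representative to obtain a linear operator, stability of that representative via the {\em inf-sup} condition of Lemma~\ref{lm:inf-sup-loc2}, and finally a bound of the form $|b_E(\bftau,\bbv_h)|\lesssim h_E^{-1/2}\,||\bftau||_{\Sigma(E)}\,||\bbv_h||_{0,\partial E}$ which, after multiplying by $h_E^{1/2}$, gives the continuity estimate in the $||\cdot||_{h,\partial E}$ norm. The only (cosmetic) difference is in that last bound: where the paper invokes a scaled duality estimate, a 1D inverse estimate and a trace estimate for $||\bftau\bbn||_{-1/2,\partial E}$, you prove the same inequality directly by integrating by parts against a stable $H^1(E)$ extension of the boundary data --- just be aware that the two scaling bounds you claim for that extension are not consequences of ``classical extension results'' alone (which would only control the extension by $||\bbv_h||_{1/2,\partial E}$), but also require the finite-dimensionality of the trace of $U_h(E)$, i.e.\ the same 1D inverse estimate for piecewise linears on $\partial E$ that the paper uses explicitly.
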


\begin{proof}
Fix $\bftau\in\Sigma(E)$.	
Due to Lemma \ref{lm:inf-sup-loc}, the linear system in the first line of \eqref{eq:com-loc} is solvable, and two solutions differ up to an element of $K_h(\partial E)$, cf. \eqref{eq:kernelh-loc}. To prove the continuity estimate in \eqref{eq:com-loc}, let us take $\pi_E\bftau\in\Sigma_h(E)$ as the solution which minimizes $||(\pi_E\bftau)\bbn ||_{0,\partial E}$. From \eqref{eq:inf-sup-loc-2} and the first of \eqref{eq:com-loc}, we thus get:

\begin{equation}\label{eq:fortin1}
||(\pi_E\bftau)\bbn ||_{0,\partial E} \lesssim
\sup_{\bbv_h\in U_h(E)}\frac{b_E( \pi_E\bftau,\bbv_h)}{|| \bbv_h||_{0,\partial E}} =
\sup_{\bbv_h\in U_h(E)}\frac{b_E( \bftau,\bbv_h)}{|| \bbv_h||_{0,\partial E}}
\end{equation}
By recalling \eqref{eq:inf-sup-loc2}, a (scaled) duality estimate and a 1D inverse estimate for piecewise linear polynomials, give:

\begin{equation}\label{eq:fortin2}
\begin{aligned}
b_E( \bftau,\bbv_h)= - \int_{\partial E}\bftau\bbn\cdot \bbv_h & \lesssim || \bftau\bbn ||_{-1/2,\partial E}\left( | \bbv_h |_{1/2,\partial E} +
h_E^{-1/2}|| \bbv_h||_{0,\partial E} \right)\\
& \lesssim || \bftau\bbn ||_{-1/2,\partial E}\, h_E^{-1/2}|| \bbv_h||_{0,\partial E} .
\end{aligned}
\end{equation}
Therefore, from \eqref{eq:fortin1} and \eqref{eq:fortin2} we obtain

\begin{equation}\label{eq:fortin3}
h_E^{1/2}||(\pi_E\bftau)\bbn ||_{0,\partial E} \lesssim
|| \bftau\bbn ||_{-1/2,\partial E} .
\end{equation}
The continuity estimate in \eqref{eq:com-loc} now follows from a trace estimate and definition \eqref{eq:meshdep}.

\end{proof}

We notice that $\bftau\in\Sigma^0(E)$ implies $\pi_E\bftau\in\Sigma^0_h(E)$, cf. \eqref{eq:cont-loc_spaces} and \eqref{eq:affine-sigma}. Indeed, by definition \eqref{eq:local_stress}, $\bdiv(\pi_E\bftau)\in RM(E)$. Since $RM(E)\subseteq U_h(E)$, we can take
$\bbv_h = \bdiv(\pi_E\bftau)$ in \eqref{eq:com-loc}, to obtain (using also the integration by parts):

\begin{equation}\label{eq:fortin4}
\int_E |\bdiv(\pi_E\bftau)|^2 = -
b_E(\pi_E\bftau, \bdiv(\pi_E\bftau)) =
-b_E(\bftau, \bdiv(\pi_E\bftau))=
\int_E \bdiv\bftau\cdot\bdiv(\pi_E\bftau) =0 .
\end{equation}
This observation, together with Proposition \ref{pr:loal-comp}, immediately leads to the following result.

\begin{cor}\label{cor:local-comp}
	Under assumptions $\mathbf{(A1)}$ and $\mathbf{(A2)}$, there exists a linear operator $\pi_E\, :\, \Sigma^0(E)\to \Sigma_h^0(E)$ such that:
	
	\begin{equation}\label{eq:com-loc-bis}
	\begin{aligned}
	&b_E(\pi_E\,\bftau,\bbv_h)= b_E(\bftau,\bbv_h)\qquad \forall\, \bftau\in\Sigma^0(E)\, , \ \forall\, \bbv_h\in U_h(E),\\
	& || (\pi_E\,\bftau)\bbn||_{h,\partial E}\lesssim ||\bftau||_{\Sigma(E)}.
	\end{aligned}
	\end{equation}
	
\end{cor}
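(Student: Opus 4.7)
The plan is to deduce Corollary \ref{cor:local-comp} directly from Proposition \ref{pr:loal-comp}, which has already been proved, together with the observation recorded in equation \eqref{eq:fortin4}. In other words, I would simply take the operator $\pi_E$ of Proposition \ref{pr:loal-comp}, restrict it to the subspace $\Sigma^0(E)\subseteq \Sigma(E)$, and verify that its image lies in $\Sigma^0_h(E)$; the two properties in \eqref{eq:com-loc-bis} are then immediate consequences of the corresponding properties in \eqref{eq:com-loc}.

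The only substantive point to check is that $\pi_E\bftau\in\Sigma^0_h(E)$, i.e.\ that $\bdiv(\pi_E\bftau)=\bfzero$, whenever $\bftau\in\Sigma^0(E)$. Here the key structural input is that by definition of $\Sigma_h(E)$, see \eqref{eq:local_stress} together with \eqref{eq:div1}, we have $\bdiv(\pi_E\bftau)\in RM(E)$. Since $RM(E)$ consists of (traces of) functions in $H^1(E)^2$ which are affine, I can realize $\bdiv(\pi_E\bftau)$ as (the restriction to $E$ of) an element $\bbv_h\in U_h(E)$; this is the content of the inclusion $RM(E)\subseteq U_h(E)$ used in the excerpt. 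I would then test the identity in the first line of \eqref{eq:com-loc} against $\bbv_h:=\bdiv(\pi_E\bftau)$ and integrate by parts on $E$, exactly as in \eqref{eq:fortin4}:
\[
\int_E |\bdiv(\pi_E\bftau)|^2 = -b_E(\pi_E\bftau,\bdiv(\pi_E\bftau))= -b_E(\bftau,\bdiv(\pi_E\bftau)) = \int_E \bdiv\bftau\cdot\bdiv(\pi_E\bftau)=0,
\]
where the last equality uses $\bdiv\bftau=\bfzero$ because $\bftau\in\Sigma^0(E)$. Hence $\bdiv(\pi_E\bftau)=\bfzero$, so $\pi_E\bftau\in\Sigma^0_h(E)$.

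With this established, the first identity in \eqref{eq:com-loc-bis} is just the first identity in \eqref{eq:com-loc} restricted to $\bftau\in\Sigma^0(E)$, and the continuity bound in \eqref{eq:com-loc-bis} is literally the continuity bound in \eqref{eq:com-loc}. Since nothing non-trivial is added beyond Proposition \ref{pr:loal-comp} and the integration-by-parts argument of \eqref{eq:fortin4}, I do not expect any real obstacle; the only thing to be careful about is the use of $RM(E)\subseteq U_h(E)$, which is valid because elements of $RM(E)$ are globally affine on $E$ and therefore affine on each edge, hence legitimate members of $U_h(E)$ as defined in \eqref{eq:local_displ}.
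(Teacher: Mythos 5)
Your proposal is correct and follows exactly the paper's own route: the paper establishes the corollary by the same observation that $\bdiv(\pi_E\bftau)\in RM(E)\subseteq U_h(E)$, tests the Fortin identity with $\bbv_h=\bdiv(\pi_E\bftau)$, and integrates by parts as in \eqref{eq:fortin4} to conclude $\bdiv(\pi_E\bftau)=\bfzero$, after which \eqref{eq:com-loc-bis} is inherited from \eqref{eq:com-loc}. No gaps.
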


Recalling \eqref{eq:cont-spaces}, we define the linear operator $\pi_h\,:\,\Sigma^0\to \Sigma_h^0$ by adding the local contributions $\pi_E$, i.e.:

\begin{equation}\label{eq:fortin-glob}
\pi_{h|E} : = \pi_E\qquad \forall\, E\in\Th .
\end{equation}
Obviously, Corollary \ref{cor:local-comp} and \eqref{eq:cont-glob_forms} give:

\begin{prop}\label{pr:fortin-glob}
	Under assumptions $\mathbf{(A1)}$ and $\mathbf{(A2)}$, there exists a linear operator $\pi_h\, :\, \Sigma^0\to \Sigma_h^0$ such that:
	
	\begin{equation}\label{eq:com-glob}
	\begin{aligned}
	&b(\pi_h\,\bftau,\bbv_h)= b(\bftau,\bbv_h)\qquad \forall\, \bftau\in\Sigma^0\, , \ \forall\, \bbv_h\in U_h^0,\\
	& || \pi_h\,\bftau||_{h}\lesssim ||\bftau||_{\Sigma}.
	\end{aligned}
	\end{equation}
\end{prop}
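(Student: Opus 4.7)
The plan is to build $\pi_h$ by gluing the local operators furnished by Corollary \ref{cor:local-comp} and to verify the two required properties by summing the local identities over $\Th$. The structural fact that makes this straightforward is that both $\Sigma^0 = \prod_{E\in\Th} \Sigma^0(E)$ and $\Sigma_h^0 = \prod_{E\in\Th} \Sigma_h^0(E)$ are \emph{broken} product spaces, with no interelement continuity imposed on the stress (traction continuity is only enforced weakly by the displacement multiplier). Consequently, the element-by-element definition \eqref{eq:fortin-glob}, namely $(\pi_h\bftau)_{|E} := \pi_E(\bftau_{|E})$, automatically yields a well-defined linear map $\Sigma^0 \to \Sigma_h^0$, since $\pi_E(\bftau_{|E}) \in \Sigma_h^0(E)$ by the local construction.

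For the commuting identity in the first line of \eqref{eq:com-glob}, I would recall from \eqref{eq:cont-glob_forms} that the global form splits additively as $b(\bftau,\bbv)=\sum_{E\in\Th} b_E(\bftau_{|E},\bbv_{|E})$. By \eqref{eq:discr-spaces} every $\bbv_h \in U_h^0$ satisfies $\bbv_{h|E} \in U_h(E)$, so the local commuting identity in \eqref{eq:com-loc-bis} can be applied elementwise and the contributions summed:
\[
b(\pi_h \bftau, \bbv_h) = \sum_{E\in\Th} b_E\bigl(\pi_E(\bftau_{|E}),\, \bbv_{h|E}\bigr) = \sum_{E\in\Th} b_E\bigl(\bftau_{|E},\, \bbv_{h|E}\bigr) = b(\bftau,\bbv_h).
\]

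For the continuity bound, the mesh-dependent norm \eqref{eq:meshdep} is also additive over elements, so combining the local continuity estimate of \eqref{eq:com-loc-bis} with the additivity of $||\cdot||_\Sigma^2$ coming from \eqref{eq:sigma-norm} gives
\[
||\pi_h \bftau||_h^2 = \sum_{E\in\Th} ||(\pi_E\bftau)\bbn||_{h,\partial E}^2 \lesssim \sum_{E\in\Th} ||\bftau||_{\Sigma(E)}^2 = ||\bftau||_\Sigma^2,
\]
which is the second line of \eqref{eq:com-glob}.

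There is honestly no substantial obstacle to this proposition: the work has been done at the local level in Corollary \ref{cor:local-comp}, and the present statement is a routine assembly argument. The only two verifications worth flagging are (i) that restrictions of $U_h^0$ functions to each element indeed lie in $U_h(E)$, which is immediate from \eqref{eq:discr-spaces}, and (ii) that the global objects $b(\cdot,\cdot)$, $||\cdot||_h^2$, and $||\cdot||_\Sigma^2$ are all sums of their local counterparts, which is clear from their definitions.
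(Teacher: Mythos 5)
Your proposal is correct and is exactly the argument the paper intends: the paper simply asserts that the result follows ``obviously'' from Corollary \ref{cor:local-comp} and the additive structure in \eqref{eq:cont-glob_forms}, via the elementwise definition \eqref{eq:fortin-glob}. You have merely written out the routine summation details that the authors omit.
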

With Proposition \ref{pr:fortin-glob} at hand, the following {\em inf-sup} condition is easily proved (it is nothing but Fortin's trick, see \cite{Bo-Bre-For}).

\begin{prop}\label{pr:fortin-glob2}
	Under assumptions $\mathbf{(A1)}$ and $\mathbf{(A2)}$, we have:
	
	\begin{equation}\label{eq:com-glob2}
	  \sup_{\bftau_h\in \Sigma_h^0}\frac{b( \bftau_h,\bbv_h)}{|| \bftau_h||_h }  \gtrsim ||\bbv_h||_{U^0/H} \qquad \forall\, \bbv_h\in U_h^0/H .
	\end{equation}
\end{prop}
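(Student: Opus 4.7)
The plan is to apply Fortin's classical argument, combining the continuous inf-sup condition \eqref{eq:cont-inf-sup} with the operator $\pi_h$ supplied by Proposition \ref{pr:fortin-glob}. Given an arbitrary $\bbv_h\in U_h^0/H\subseteq U^0/H$, I would first invoke \eqref{eq:cont-inf-sup} to select $\bftau\in\Sigma^0$ (in general not discrete) such that
$$
b(\bftau,\bbv_h)\gtrsim ||\bftau||_\Sigma\,||\bbv_h||_{U^0/H}.
$$

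The key second step is to lift this $\bftau$ into the discrete space via $\pi_h\bftau\in\Sigma_h^0$ and use it as a test function on the left-hand side of \eqref{eq:com-glob2}. Exploiting the commuting property (first line of \eqref{eq:com-glob}) and the continuity estimate $||\pi_h\bftau||_h\lesssim ||\bftau||_\Sigma$ (second line of \eqref{eq:com-glob}), I would write
$$
\sup_{\bftau_h\in\Sigma_h^0}\frac{b(\bftau_h,\bbv_h)}{||\bftau_h||_h}
\;\ge\; \frac{b(\pi_h\bftau,\bbv_h)}{||\pi_h\bftau||_h}
\;=\; \frac{b(\bftau,\bbv_h)}{||\pi_h\bftau||_h}
\;\gtrsim\; \frac{b(\bftau,\bbv_h)}{||\bftau||_\Sigma}
\;\gtrsim\; ||\bbv_h||_{U^0/H},
$$
where the final bound follows from the choice of $\bftau$. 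This chain of inequalities is precisely \eqref{eq:com-glob2}.

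I do not anticipate any essential obstacle at this stage, since all the nontrivial work has already been absorbed into Proposition \ref{pr:fortin-glob} (and upstream into Lemmas \ref{lm:inf-sup-loc}--\ref{lm:midpoint-norm} and Corollary \ref{cor:local-comp}): what remains is the standard textbook Fortin deduction. The only minor subtlety worth mentioning is that the bilinear form $b(\cdot,\cdot)$ depends on the displacement field only through its trace on $\Eh$, cf.\ \eqref{kerbt}, so pairing a non-discrete $\bftau\in\Sigma^0$ with the discrete $\bbv_h\in U_h^0$ is entirely legitimate.
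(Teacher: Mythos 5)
Your argument is correct and coincides with the paper's own proof: both apply Fortin's trick, using the operator $\pi_h$ from Proposition \ref{pr:fortin-glob} to transfer the continuous \emph{inf-sup} condition \eqref{eq:cont-inf-sup} to the discrete pair, with the continuity bound $||\pi_h\bftau||_h\lesssim||\bftau||_\Sigma$ closing the chain. The only cosmetic difference is that the paper keeps the supremum over all $\bftau\in\Sigma^0$ throughout, whereas you fix a near-maximizing $\bftau$ at the outset; the content is identical.
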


\begin{proof}
Fix $\bbv_h\in U^0_h$.
Using \eqref{eq:com-glob} and the {\em inf-sup} condition for the continuous problem (see \eqref{eq:cont-inf-sup}), we infer

\begin{equation}\label{eq:fortin-glob2}
\sup_{\bftau_h\in \Sigma_h^0}\frac{b( \bftau_h,\bbv_h)}{|| \bftau_h||_h } \geq
\sup_{\bftau\in \Sigma^0}\frac{b( \pi_h\bftau,\bbv_h)}{|| \pi_h\bftau||_h }=
\sup_{\bftau\in \Sigma^0}\frac{b( \bftau,\bbv_h)}{|| \pi_h\bftau||_h }\gtrsim
\sup_{\bftau\in \Sigma^0}\frac{b( \bftau,\bbv_h)}{|| \bftau||_\Sigma }\gtrsim ||\bbv_h||_{U^0/H} .
\end{equation}
	
\end{proof}

\subsection{An interpolation operator for the stresses}\label{ss:interpol-oper}

We now recall the interpolation operator introduced in \cite{ADLP_HR}.
We first set, given $r > 2$:

\begin{equation}\label{eq:regspace}
W^r(E):=\left\{  \bftau  \ : \bftau\in L^r(E)^{2\times 2} \ , \  \bftau=\bftau^T\ , \ \bdiv\bftau\in L^2(E)^2    \right\} .
\end{equation}
To continue, we locally define the operator $\IE : W^r(E)\to  \Sigma_h(E)$ as follows. Given $\bftau\in W^r(E)$, $\IE\bftau\in \Sigma_h(E)$ is determined by:

\begin{equation}\label{eq:loc-interp_0}
\int_{\partial E} (\IE \bftau) \bbn\cdot \bfvarphi_\ast = \int_{\partial E}  \bftau\bbn\cdot \bfvarphi_\ast \qquad \forall \bfvarphi_\ast\in  R_\ast(\partial E) ,
\end{equation}
where

\begin{equation}\label{eq:Rast}
R_\ast(\partial E) = \left\{
\bfvarphi_\ast\in L^2(\partial E)^2 \,: \,
\bfvarphi_{\ast | e} = \bfgamma_e + \delta_e (\bbx -\bbx_C)^\perp \quad \bfgamma_e\in\R^2, \  \delta_e\in\R, \ \forall e\in\partial E  \right\}.
\end{equation}
If $\bftau$ is not sufficiently regular, the integral in the right-hsnd side of \eqref{eq:loc-interp_0} is intended as a duality between $W^{-\frac{1}{r},r}(\partial E)^2$ and $W^{\frac{1}{r},r'}(\partial E)^2$. If $\bftau$ is a regular function, the above condition is equivalent to require:

\begin{equation}\label{eq:loc-interp}
\left\lbrace{
	\begin{aligned}
	&\int_e (\IE \bftau) \bbn = \int_e  \bftau\bbn \qquad \forall e\in \partial E ;\\
	&\int_e (\IE \bftau) \bbn\cdot (\bbx - \bbx_C)^\perp = \int_e  \bftau\bbn \cdot (\bbx - \bbx_C)^\perp \qquad \forall e\in \partial E .
	\end{aligned}
} \right.
\end{equation}

%
%
%
	

The global interpolation operator $\Ih : W^r(\O)\to \Sigma_h$ is then defined by simply glueing the local contributions provided by $\IE$. More precisely, we set $(\Ih\tau)_{|E} :=\IE\bftau_{|E}$ for every $E\in\Th$ and $\bftau\in W^r(\O)$. It can be proved, see \cite{ADLP_HR}, that the commuting diagram property:

\begin{equation}\label{eq:commuting}
\bdiv(\Ih\bftau) = \Pi_{RM}(\bdiv\bftau)
\end{equation}
holds true, $\Pi_{RM}$ being the $L^2$-projection operator onto the space of local rigid body motions.
Furthermore, the following estimates have been proved in \cite{ADLP_HR}.

\begin{prop}\label{pr:approxest} Under assumptions $\mathbf{(A1)}$ and $\mathbf{(A2)}$,
	for the interpolation operator $\IE$ defined in \eqref{eq:loc-interp}, the following estimates hold:
	
	\begin{equation}\label{eq:l2est}
	|| \bftau -\IE\bftau||_{0,E}\lesssim h_E |\bftau|_{1,E} \qquad \forall \bftau\in  \widetilde\Sigma(E) \cap H^1(E)^4_s .
	\end{equation}
	
	\begin{equation}\label{eq:divest}
	\begin{aligned}
	|| \bdiv(\bftau -\IE\bftau)||_{0,E}  \lesssim h_E |\bdiv\bftau|_{1,E}   \ \
	\forall \bftau\in \widetilde\Sigma(E) \cap H^1(E)^4_s
	\mbox{ \rm s.t.  $\bdiv\bftau\in H^1(E)^2$}.
	\end{aligned}
	\end{equation}	
	
\end{prop}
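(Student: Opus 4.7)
The plan is to handle the two estimates separately.

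For the divergence estimate \eqref{eq:divest}, I would simply invoke the commuting diagram property \eqref{eq:commuting}: since $\bdiv(\bftau-\IE\bftau)=\bdiv\bftau-\Pi_{RM}(\bdiv\bftau)$ and $RM(E)\supset\R^2$, the $L^2$--projection error on a star--shaped element (assumption $\mathbf{(A1)}$) satisfies the classical Bramble--Hilbert/Dupont--Scott estimate $||\bdiv\bftau-\Pi_{RM}(\bdiv\bftau)||_{0,E}\lesssim h_E|\bdiv\bftau|_{1,E}$, and we are done.

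For the $L^2$ estimate \eqref{eq:l2est}, I would first observe that $\IE$ preserves constants. Indeed, any constant symmetric tensor $\bftau_c$ belongs to $\Sigma_h(E)$ (one has $\bdiv\bftau_c=\bfzero\in RM(E)$, and $\bftau_c\bbn\in R(e)$ on every edge since $d_e=0$), and by the counting in Remark \ref{rm:stress_space} the moment conditions \eqref{eq:loc-interp_0} uniquely determine an element of $\Sigma_h(E)$, so $\IE\bftau_c=\bftau_c$. Choosing $\bftau_c$ to be the $L^2(E)$--projection of $\bftau$ onto constants and setting $\bfeta:=\bftau-\bftau_c$, a triangle inequality gives
$$||\bftau-\IE\bftau||_{0,E}\le||\bfeta||_{0,E}+||\IE\bfeta||_{0,E},$$
and Bramble--Hilbert on the star--shaped $E$ bounds the first term by $h_E|\bftau|_{1,E}$. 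For the second term I would apply \eqref{eq:first-equiv4} to the discrete tensor $\IE\bfeta$, obtaining $||\IE\bfeta||_{0,E}\lesssim h_E^{1/2}||(\IE\bfeta)\bbn||_{0,\partial E}$; then a reference--element scaling argument yields $||(\IE\bfeta)\bbn||_{0,e}\lesssim||\bfeta\bbn||_{0,e}$ edge by edge, and a scaled trace inequality gives $h_E^{1/2}||\bfeta\bbn||_{0,\partial E}\lesssim||\bfeta||_{0,E}+h_E|\bfeta|_{1,E}\lesssim h_E|\bftau|_{1,E}$, which closes the proof.

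The main technical obstacle is the boundary stability $||(\IE\bfeta)\bbn||_{0,e}\lesssim||\bfeta\bbn||_{0,e}$. Since $\IE$ is defined via moment conditions against the test space $R_\ast(\partial E)$ (see \eqref{eq:Rast}), which differs from the trial space $R(e)$ of \eqref{eq:edge_approx}, the interpolant is \emph{not} the $L^2$--projection of $\bftau\bbn$, and stability must therefore be established through a shape--regular scaling argument: the $3\times 3$ moment--to--coefficient matrix on each edge is invertible thanks to the non--degeneracy of $\int_e s\,\bbn\cdot(\bbx-\bbx_C)^\perp$ already noted in the proof of Lemma \ref{lm:sigmaf}, and assumptions $\mathbf{(A1)}$--$\mathbf{(A2)}$ provide the uniform bound on its inverse needed to make the estimate independent of $h$.
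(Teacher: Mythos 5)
Your argument is sound, but note that the paper itself offers no proof of Proposition \ref{pr:approxest}: it is imported verbatim from \cite{ADLP_HR} ("the following estimates have been proved in \cite{ADLP_HR}"), so your proposal is necessarily a reconstruction rather than a match. As a reconstruction it follows the standard and correct route. The divergence estimate via the commuting diagram \eqref{eq:commuting} is immediate, since $RM(E)$ contains the constants and the $L^2$-projection is a best approximation; just be aware that \eqref{eq:commuting} is itself only quoted from \cite{ADLP_HR}, so your proof of \eqref{eq:divest} rests on that cited identity. For \eqref{eq:l2est}, the decomposition $\bftau-\IE\bftau=\bfeta-\IE\bfeta$ with $\bfeta=\bftau-\bftau_c$, combined with \eqref{eq:first-equiv4} applied to $\IE\bfeta\in\Sigma_h(E)$ and the Agmon/scaled trace inequality, is exactly the natural mechanism, and you correctly isolate the two points that need care: (i) invariance of constants, which requires unisolvence of the moment conditions \eqref{eq:loc-interp_0} --- this follows from the edge-by-edge invertibility you describe ($\bbc_e=\frac{1}{|e|}\int_e\bftau\bbn$ since $\int_e s=0$, then $d_e$ from the third moment, with $\int_e s\,\bbn\cdot(\bbx-\bbx_C)^\perp\approx|e|^2$) together with the fact, stated after \eqref{eq:meshdep}, that $\bftau_h\bbn=0$ on $\partial E$ forces $\bftau_h=0$ in $\Sigma_h(E)$; and (ii) the boundary stability $||(\IE\bfeta)\bbn||_{0,e}\lesssim||\bfeta\bbn||_{0,e}$, where the same explicit formulas give the bound with a constant controlled by $|e|\gtrsim h_E$ from $\mathbf{(A2)}$. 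The only cosmetic omission is that you never address the undefined space $\widetilde\Sigma(E)$ appearing in the statement; your argument in fact only uses $\bftau\in H^1(E)^4_s$ (which embeds into $W^r(E)$), so you prove, if anything, a marginally more general claim.
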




\subsection{Error estimates}\label{ss:errest}


We denote with $\P_0(\Th)$ the space of piecewise constant functions with respect to the given mesh $\Th$.

Once one has the stability conditions of estimate \eqref{eq:ellker-nd} and Proposition \ref{pr:fortin-glob2}, along with the interpolation estimates of Proposition \ref{pr:approxest}, an error analysis can be derived using the techniques of \cite{BFM} or \cite{ADLP_HR}. Indeed, one can prove:

\begin{prop}\label{pr:error-est} Suppose that  assumptions $\mathbf{(A1)}$ and $\mathbf{(A2)}$ are fulfilled. The following error estimate holds:
	
	\begin{equation}\label{eq:erroreq}
	|| \bfsigma- \bfsigma_h||_{0,\O} + || \bbu - \bbu_h ||_{U^0/H} \leq C(\Omega,\bfsigma,\bbu)\, h ,
\end{equation}
where $(\bfsigma,\bbu)\in \Sigma^f\times U^0/H$ is the solution to the continuous Problem \ref{cont-pbl-v2}, and $(\bfsigma_h,\bbu_h)$ is such that $\bfsigma_h=\bfsigma^0_h + \widehat\bfsigma_{h,f}$, $(\bfsigma_h,\bbu_h)\in \Sigma^f_h\times U^0_h/H$ being the solution to the discrete problem \ref{eq:discr-pbl-ls}.
Furthermore, $C(\Omega,\bfsigma,\bbu)$ is independent of $h$ but depends on the domain $\Omega$ and on the Sobolev regularity of $\bfsigma$ and $\bbu$.
%
\end{prop}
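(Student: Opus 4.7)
The plan is to follow the standard abstract error analysis for saddle-point problems, combining the mixed-method stability results of Section \ref{ss:stability} with the interpolation estimates of Section \ref{ss:interpol-oper}, and carefully tracking every consistency residual in the mesh-dependent norm $\|\cdot\|_h$.

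Since $\widehat{\bfsigma}_f = \widehat{\bfsigma}_{h,f}$ (by the choice \eqref{eq:part-const} under the assumption $\bbf_{|E}\in\P_0(E)$), the stress error coincides with $\bfsigma^0-\bfsigma_h^0$, so I would split it by inserting the interpolant $\Ih\bfsigma^0$: the piece $\bfsigma^0 - \Ih\bfsigma^0$ is already $O(h)$ by Proposition \ref{pr:approxest}, and the commuting diagram \eqref{eq:commuting} together with $\bdiv\bfsigma^0=\bfzero$ guarantees $\Ih\bfsigma^0\in\Sigma_h^0$. The remaining task is to control the pair $(\bfdelta_h,\bftheta_h) := (\Ih\bfsigma^0-\bfsigma_h^0,\,\Pi_h\bbu-\bbu_h)$, with $\Pi_h\bbu\in U_h^0$ a standard nodal interpolant of $\bbu$ on the skeleton.

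Subtracting the continuous problem \eqref{cont-pbl-v2} (tested on discrete functions) from the discrete problem \eqref{eq:discr-pbl-ls}, the pair $(\bfdelta_h,\bftheta_h)$ solves a saddle-point system of the same format as \eqref{eq:totbilinh}, whose right-hand side collects three types of residuals: (i) the VEM consistency $a_h(\Ih\bfsigma^0,\bftau_h^0)-a(\bfsigma^0,\bftau_h^0)$, bounded via polynomial $k$-consistency of $a_E^h$ on $\P_\ast$ combined with the $L^2$ approximation estimate \eqref{eq:l2est}; (ii) the load consistency $F_h-F$, which by \eqref{eq:magic-rhs} reduces to $\sum_E a_E(\widehat{\bfsigma}_f-\Pi_E\widehat{\bfsigma}_f,\bftau_h^0)$ and is $O(h)$ (and in fact vanishes for the $\P_1$ variant of Remark \ref{rm:twoproj}, since $\widehat{\bfsigma}_f$ is then linear and preserved by $\Pi_E$); (iii) the compatibility defects $b(\bftau_h^0,\Pi_h\bbu-\bbu)$ and $b(\bfsigma^0-\Ih\bfsigma^0,\bbv_h)$, attacked by edge-wise scaled trace inequalities together with definition \eqref{eq:meshdep}. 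At this point, the coercivity \eqref{eq:ellker-nd} (from Lemma \ref{lm:elker} plus norm equivalence) and the inf-sup condition of Proposition \ref{pr:fortin-glob2} supply the abstract saddle-point stability, which delivers $\|\bfdelta_h\|_h + \|\bftheta_h\|_{U^0/H}\lesssim h$. Lemma \ref{lm:equiv_norm} then upgrades this to the required $L^2$ estimate on the stress, and the displacement estimate follows.

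The most delicate step I anticipate is (iii), specifically the bound on $b(\bfsigma^0-\Ih\bfsigma^0,\bbv_h)$: unlike the Fortin operator $\pi_h$ of Proposition \ref{pr:fortin-glob}, the interpolation $\Ih$ does \emph{not} make this term vanish, because $U_h$ restricted to $\partial E$ contains a tangential linear component not captured by $R_\ast(\partial E)$. The argument therefore rests on the scaled duality $|\int_e(\bfsigma^0-\Ih\bfsigma^0)\bbn\cdot\bbv_h|\lesssim h_E^{-1/2}\|(\bfsigma^0-\Ih\bfsigma^0)\bbn\|_{-1/2,\partial E}\|\bbv_h\|_{0,\partial E}$, combined with a trace/approximation bound on the interpolant, and this is exactly where the mesh-dependent norm $\|\cdot\|_h$ plays its cleaning role, yielding sharper scaling than the plain $\Sigma$-norm analysis of \cite{ADLP_HR}.
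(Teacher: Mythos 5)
Your proposal is correct and follows essentially the same route as the paper's proof: insert the interpolant $\Ih\bfsigma$ (exact on the particular solution since $\widehat{\bfsigma}_f=\widehat{\bfsigma}_{h,f}$), invoke the abstract saddle-point stability from the coercivity \eqref{eq:ellker-nd} and the {\em inf-sup} condition of Proposition \ref{pr:fortin-glob2} in the mesh-dependent norm, and bound exactly the same four residuals (VEM consistency of $a_h$, the two compatibility defects in $b$ via scaled trace/duality estimates, and the load consistency $F_h-F$, which vanishes for the $\P_1$ projection and is $O(h)$ for the $\P_0$ one). The only cosmetic difference is that you work with $\Ih\bfsigma^0$ directly while the paper interpolates $\bfsigma$ and then splits.
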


\begin{proof}
	We take $\bfsigma_I\in \Sigma_h^f$ defined as $\bfsigma_I=\Ih\bfsigma$, see section \ref{ss:interpol-oper}. Due to the splitting $\bfsigma=\bfsigma^0+\widehat{\bfsigma}_f$, with $\bfsigma^0\in\Sigma^0$, we have
	
	\begin{equation}\label{eq:splitting}
	\bfsigma_I=\Ih\bfsigma= \Ih \bfsigma^0+\Ih\widehat{\bfsigma}_f := \bfsigma^0_I + \widehat{\bfsigma}_{I,f} .
	\end{equation}
	From \eqref{eq:commuting}, we get $\bfsigma^0_I\in\Sigma^0_h$ and $\widehat{\bfsigma}_{I,f}\in \Sigma^f_h$. We also take $\bbu_I\in U^0_h$ as the usual VEM interpolant of $\bbu$, see for example \cite{volley} or \cite{BLRXX}.

	We now form $(\bfsigma_h - \bfsigma_I,\bbu_h - \bbu_I)\in\Sigma_h^0\times U_h^0$. Then, using the
	{\em ellipticity-on-the-kernel} condition of estimate \eqref{eq:ellker-nd} and the {\em inf-sup} condition of Proposition \ref{pr:fortin-glob2}, there exists $(\bftau_h, \bbv_h)\in\Sigma_h^0\times U_h^0$ such that (see \cite{Bo-Bre-For} and \cite{Braess:book}, for instance):
	
	\begin{equation}\label{eq:costest}
	|| \bftau_h||_h + || \bbv_h ||_{U^0/H}
	\lesssim 1
	\end{equation}
	and
	
	\begin{equation}\label{eq:stabest}
	||\bfsigma_h - \bfsigma_I||_h + || \bbu_h - \bbu_I ||_{U^0/H}
	\lesssim \A_h (\bfsigma_h - \bfsigma_I,\bbu_h - \bbu_I;\bftau_h,\bbv_h) .
	\end{equation}
	
	We have, considering the splitting $\bfsigma=\bfsigma^0+\widehat{\bfsigma}_f$ and \eqref{eq:splitting}, and using both \eqref{cont-pbl} and \eqref{discr-pbl-cpt}, together with \eqref{eq:magic-rhs}:
	
	\begin{equation}\label{eq:stabest2}
	\begin{aligned}
	\A_h & (\bfsigma_h  - \bfsigma_I,\bbu_h - \bbu_I ;\bftau_h,\bbv_h)=
	\A_h (\bfsigma_h ,\bbu_h ;\bftau_h,\bbv_h) - \A_h ( \bfsigma_I,  \bbu_I;\bftau_h,\bbv_h) \\
	& = \left[a(\bfsigma,\bftau_h) - a_h(\bfsigma_I,\bftau_h)\right]
	+ b(\bftau_h, \bbu-\bbu_I) +b( \bfsigma-\bfsigma_I, \bbv_h) + [F_h(\bftau_h) - F(\bftau_h)] \\
	&= T_1+T_2+T_3+ T_4 .
	\end{aligned}
	\end{equation}
	
	For both the choices \eqref{eq:pi0} and \eqref{eq:pi1}, the term $T_1$ can be treated using the techniques of \cite{ADLP_HR}, to obtain:

	\begin{equation}\label{eq:stabest6}
	T_1\lesssim \left(   ||\bfsigma- \bfsigma_I||_{0,\O}+ ||\bfsigma - \bfsigma_\pi)||_{0,\O} +h\,||\bdiv\bfsigma_I||_{0,\O}
	\right)  ||\bftau_h||_h ,
	\end{equation}
	where $\bfsigma_\pi$ is the $L^2$-projection of $\bfsigma$ onto $\P_0(\Th)^{2\times 2}_s$.
	
	Regarding $T_2$, using the Agmon's trace inequality (see for example \cite{Agmon}), one has:
	
	\begin{equation}\label{eq:stabest7}
		\begin{aligned}	
		T_2 = - \sum_{E\in\Th}
		\int_{\partial E}\bftau_h\bbn&\cdot(\bbu-\bbu_I)\lesssim \sum_{E\in\Th} h_E^{1/2} || \bftau_h||_{0,\partial E} h_E^{-1/2} || \bbu-\bbu_I||_{0,\partial E}\\
		& \lesssim \sum_{E\in\Th} h_E^{1/2} || \bftau_h||_{0,\partial E} \left( h_E|| \bbu-\bbu_I||_{0,E} + | \bbu-\bbu_I|_{1,E} \right)\\
		&\lesssim || \bbu-\bbu_I||_{U^0}\, || \bftau_h||_{h} .
		\end{aligned}
	\end{equation}

	Term $T_3$ can be treated using standard trace inequalities, to obtain:
	
	\begin{equation}\label{eq:stabest7bis}
	T_3 \lesssim || \bfsigma-\bfsigma_I||_{H(\bdiv)}\, || \bbv_h||_{U^0} .
	\end{equation}

To estimate the term $T_4$ we first recall that, see \eqref{eq:magic-rhs}:	

\begin{equation}\label{eq:magic-rhs2}
F_h(\bftau_h) - F(\bftau_h) =
\sum_{E\in\Th}a_E(\widehat{\bfsigma}_{f}- \Pi_E \widehat{\bfsigma}_{f},\bftau_h) .
\end{equation}	
If $\Pi_E$ is selected according with \eqref{eq:pi1}, then $\Pi_E \widehat{\bfsigma}_{f}= \widehat{\bfsigma}_{f}$, and $T_4$ vanishes. If $\Pi_E$ is selected according with \eqref{eq:pi0}, we have $\Pi_E \widehat{\bfsigma}_{f}=0$ (cf. \eqref{eq:part-const}). Then we get, also using \eqref{eq:normequiv}:

\begin{equation}\label{eq:t4}
\begin{aligned}
T_4 = F_h(\bftau_h) - F(\bftau_h)
&\lesssim \sum_{E\in\Th}a_E(\widehat{\bfsigma}_{f},\bftau_h)\lesssim
\left( \sum_{E\in\Th} || \widehat{\bfsigma}_{f} ||_{0,E}^2 \right)^{1/2}
\left( \sum_{E\in\Th}||\bftau_h ||_{0,E}^2  \right)^{1/2}\\
& \lesssim
\left( \sum_{E\in\Th} || \widehat{\bfsigma}_{f} ||_{0,E}^2 \right)^{1/2} || \bftau_h||_{h} .
\end{aligned}
\end{equation}	
A direct computation taking into account \eqref{eq:part-const} gives $|| \widehat{\bfsigma}_{f} ||_{0,E}\lesssim h_E || \bbf ||_{0,E}$. Therefore, we obtain

\begin{equation}\label{eq:t4_bis}
T_4  \lesssim
\left( \sum_{E\in\Th} h_E^2 || \bbf ||_{0,E}^2 \right)^{1/2} || \bftau_h||_{h} .
\end{equation}

	From \eqref{eq:stabest}, \eqref{eq:stabest2}, \eqref{eq:stabest6}, \eqref{eq:stabest7}, \eqref{eq:stabest7bis} and \eqref{eq:t4_bis}, we get:
	
	\begin{equation}\label{eq:stabest8}
	\begin{aligned}
	||\bfsigma_h - \bfsigma_I||_h & + || \bbu_h - \bbu_I ||_{U^0/H}
	\lesssim  \Big( ||\bfsigma- \bfsigma_I||_{H(\bdiv)}+ ||\bfsigma - \bfsigma_\pi)||_{0,\O} \\
	&+ h\,||\bdiv\bfsigma_I||_{0,\O}+||\bbu -\bbu_I||_{U^0} + h\, ||\bbf||_{0,\O} \Big)
	\left(  || \bftau_h||_h + || \bbv_h ||_{U^0} \right) .
	\end{aligned}
	\end{equation}

Using \eqref{eq:costest}, standard approximation results and estimates \eqref{eq:l2est}-\eqref{eq:divest}, we infer:
	
	\begin{equation}\label{eq:conv-est}
	||\bfsigma_h - \bfsigma_I||_h  + || \bbu_h - \bbu_I ||_{U^0/H}
	 \leq C(\Omega,\bfsigma,\bbu)\, h ,
	\end{equation}
	where $C(\Omega,\bfsigma,\bbu)$ is independent of $h$ but depends on the domain $\Omega$ and on the Sobolev regularity of $\bfsigma$ and $\bbu$. We now use the triangle inequality and the estimate
	
	$$
	||\bftau_h||_{0,\O}\lesssim || \bftau_h||_h \qquad \forall \bftau_h\in\Sigma_h .
	$$
	
	Exploiting \eqref{eq:conv-est}, again standard approximation results and \eqref{eq:l2est}, we thus obtain:
	
	\begin{equation}\label{eq:stabest9}
	\begin{aligned}
	||\bfsigma - \bfsigma_h||_{0,\O}  + || \bbu - \bbu_h ||_{U^0/H}
	&\leq  ||\bfsigma - \bfsigma_I||_{0,\O}  + || \bbu - \bbu_I ||_U +  ||\bfsigma_I - \bfsigma_h||_{0,\O}  + || \bbu_I - \bbu_h ||_U \\
	&\lesssim
	||\bfsigma - \bfsigma_I||_{0,\O}  + || \bbu - \bbu_I ||_U +  ||\bfsigma_I - \bfsigma_h||_h  + || \bbu_I - \bbu_h ||_U \\
	&\leq C(\Omega,\bfsigma,\bbu)\, h .
	\end{aligned}
	\end{equation}

\end{proof}

%
%
%

\section{Numerical results}\label{s:numer}
The present section is devoted to validation of the proposed dual hybrid methods. First, in section \ref{ss:convacc} convergence and accuracy are numerically assessed on a couple of benchmarks having a closed-form solution. Subsequently, in section \ref{ss:fold}}, an elastic problem stemming from a simple electromechanical application is considered, proving applicability of the method to the analysis and simulation of real structures. In all presented tests reference is made for comparison to the standard displacement-based linear Virtual Element Method detailed in \cite{ABLS_part_I}.
\subsection{Convergence and accuracy assessment}
\label{ss:convacc}
A set of two boundary value problems on the unit square domain $\Omega = [0, 1]^2$ is considered, for which an analytical solution is available \cite{BeiraoLovaMora}.
Material parameters are assigned in terms of Lam\'e constants $\lambda = 1$, $\mu = 1$, assuming plane strain and homogeneous isotropy conditions. The tests are defined by choosing a required solution and deriving the corresponding body force $\bbf$, as reported in the following:
\begin{itemize}
	\item[$\bullet$]
	Test $a$
	\begin{eqnarray}
		\label{eq:test_2a_sol}
		\left\{
		\begin{array}{l}
			u_1 = x^3 - 3 x y^2 \\
			u_2 = y^3 - 3 x^2 y \\
			\bbf = \bfzero
		\end{array}
		\right .
	\end{eqnarray}
	\item[$\bullet$]
	Test $b$
	\begin{eqnarray}
		\label{eq:test_2b_sol}
		\left\{
		\begin{array}{l}
			u_1 = u_2 = \sin(\pi x)  \sin(\pi y) \\
			f_1 = f_2 = -\pi^2 \left[ -(3 \mu + \lambda ) \sin(\pi x) \sin ( \pi y) + ( \mu + \lambda ) \cos ( \pi x) \cos ( \pi y ) \right]
		\end{array}
		\right .
	\end{eqnarray}
\end{itemize}
Test $a$ has Dirichlet non-homogeneous boundary conditions, zero body force and a polynomial solution; Test $b$ has Dirichlet homogeneous boundary conditions, trigonometric body force and a trigonometric solution.

The tests are run on two sets of structured [resp. unstructured] square, hexagon, and concave quadrilateral [resp. triangle, quadrilateral, and Voronoi] simple polygonal meshes, each type being represented and labeled in Fig. \ref{fig:mesh}, for a sequence of five uniform mesh refinements.

Numerical solutions for the above tests are sought with the proposed dual hybrid method, with the two versions of projection operator Eq. \eqref{eq:pi0} [resp. Eq. \eqref{eq:pi1}], which will be labeled DH P$0$ [resp. DH P$1$].
For comparison purposes, a further numerical solution with the linear displacement based VEM presented in \cite{ABLS_part_I} is computed and labeled $\rm DISP$.

Convergence rate and accuracy level are investigated computing the following error quantities:
\begin{itemize}
	\item[$\bullet$] Discrete relative error quantity for the stress field:
	\begin{equation}
	\label{eq:stress_err_norm}
	E_{\bfsigma}   :=\left( \frac{\sum_{E \in \Th} \int_{E} \,|| \bfsigma_h - \bfsigma ||^2}
	                             {\sum_{E \in \Th} \int_{E} \,||              \bfsigma ||^2}  \right)^{1/2} .
	\end{equation}
	\item[$\bullet$] Discrete relative weighted error quantity for the inter-element traction field:
	\begin{equation}
	\label{eq:traction_err_norm}
	E_{\bbt_{\bbn}}   :=\left( \frac{\sum_{e \in \Eh} |e| \int_{e} \,|| \bbt_{\bbn,h} - \bbt_{\bbn} ||^2}
	                                {\sum_{e \in \Eh} |e| \int_{e} \,||                 \bbt_{\bbn} ||^2}  \right)^{1/2} .
	\end{equation}
    where $\bbn$ is one outward unit normal to the edge $e$	chosen once and for all. Quantities $\bbt_{\bbn,h}$ are the average of the two contributions stemming from the two elements adjacent to edge $e$.
	\item[$\bullet$] Discrete $H^1$-type error quantity for the inter-element displacement field:
	\begin{equation}
	\label{eq:displ_err_norm}
	E_{{\bbu}} :=\left( \sum_{e \in \Eh} |e| \int_{e} \, \left|\left| \frac{\partial \bbu_h}{\partial \bbe} - \frac{\partial \bbu}{\partial \bbe} \right|\right|^2 \right)^{1/2} .
	\end{equation}
	where $\bbe$ is the unit tangent to the edge $e$ chosen once and for all.
\end{itemize}

Inspection of Figs. \ref{fig:Test_a_E_sig}-\ref{fig:Test_b_E_tn} confirms expected convergence rates for the three compared methods.
In terms of accuracy, for both tests and for all mesh types, dual hybrid virtual element methods DH P0 and DH P1 outperform displacement based virtual element method DISP, for stress and inter-element traction field, respectively. Comparatively, DH P1 shows the highest edge on Test b (cf. also Remark \ref{rm:twoproj}). It is noted that the three stress [resp. traction] fields coincide for Test a adopting triangles. Figs. \ref{fig:Test_a_E_u}-\ref{fig:Test_b_E_u} show that DH P1 and DISP methods are fairly comparable in terms of displacement field accuracy, for both tests and for all mesh types, with some selected cases in which DISP presents the lowest error levels while DH P0 the highest ones. It is noted that the three displacement fields coincide for Test a adopting structured quadrilaterals, and for both tests adopting triangles.

\subsection{Structural application: folded-beam suspension}
\label{ss:fold}
A representative section of a typical microelectromechanical system (MEMS) \cite{Patil_2014}, consisting of two bulky portions connected with four slender beams (see Fig. \ref{fig:comb_geom}), is considered, as a structural application on which we test the VEM methods described in the previous section. As an electromechanical plate-like device, a two-dimensional linear elastic analysis under plane stress assumption is carried out. Geometry, dimensions, boundary conditions and loading are represented in Fig. \ref{fig:comb_geom}. Material parameters are $E = 60$ GPa, $\nu = 0.22$; edge traction $q = 10^{-2}$ N/m. The relevant half domain is meshed with triangles, quadrilaterals, and Voronoi polygons as previously done. The latter spatial discretization makes use of non-convex polygons in the zones surrounding re-entering corners, which, given the particular geometry under consideration, further enlights the broader mesh capability offered by VEM methods in respect with standard FEM discretizations.

Progressively finer meshes are considered for DISP, DH P0, DH P1 method, respectively, while a reference solution is computed with CPE4H hybrid element implemented in COMSOL \cite{Comsol:2008} on a very fine mesh. Results in terms of relative error on the vertical displacement of target node $A$ (cf. Fig. \ref{fig:comb_geom}) against total number of degrees of freedom are shown in Fig. \ref{fig:comb_rslt}, which confirms the efficiency of the proposed dual hybrid method as a tool for structural analysis.
\clearpage
\newpage

\section{Conclusions}\label{s:conclusions}

We have presented a Virtual Element Method for 2D linear elastic problems, based on a dual hybrid variational formulation. The scheme offers two variants, which differ from each other according to the choice of the VEM stress projection. A stability and convergence analysis has been developed, and several numerical tests have been performed, confirming the theoretical predicitions. Our study shows that dual hybrid VEMs represents a valid alternative to standard displacement-based VEM schemes, especially if one is interested in an accurate description of the stress field. We finally remark that hybrid elasticity methods have been used, in the FEM framework, to tackle Structural Mechanics problems (e.g. plate problems, see \cite{deMirandaUbertini}): our VEM scheme might be fruitfully employed within that context as well.

\medskip

\begin{center}
	{\large {\bf Aknowledgements}}
\end{center}
E.A. gratefully acknowledges the partial financial support of the University of Rome Tor Vergata Mission Sustainability
Programme through project SPY-E81I18000540005.
\medskip

\begin{figure}[h!]
	\centering
	\renewcommand{\thesubfigure}{}
	\subfigure[QuadS]{\includegraphics[bb = 120 290 600 600,clip,scale=0.3]{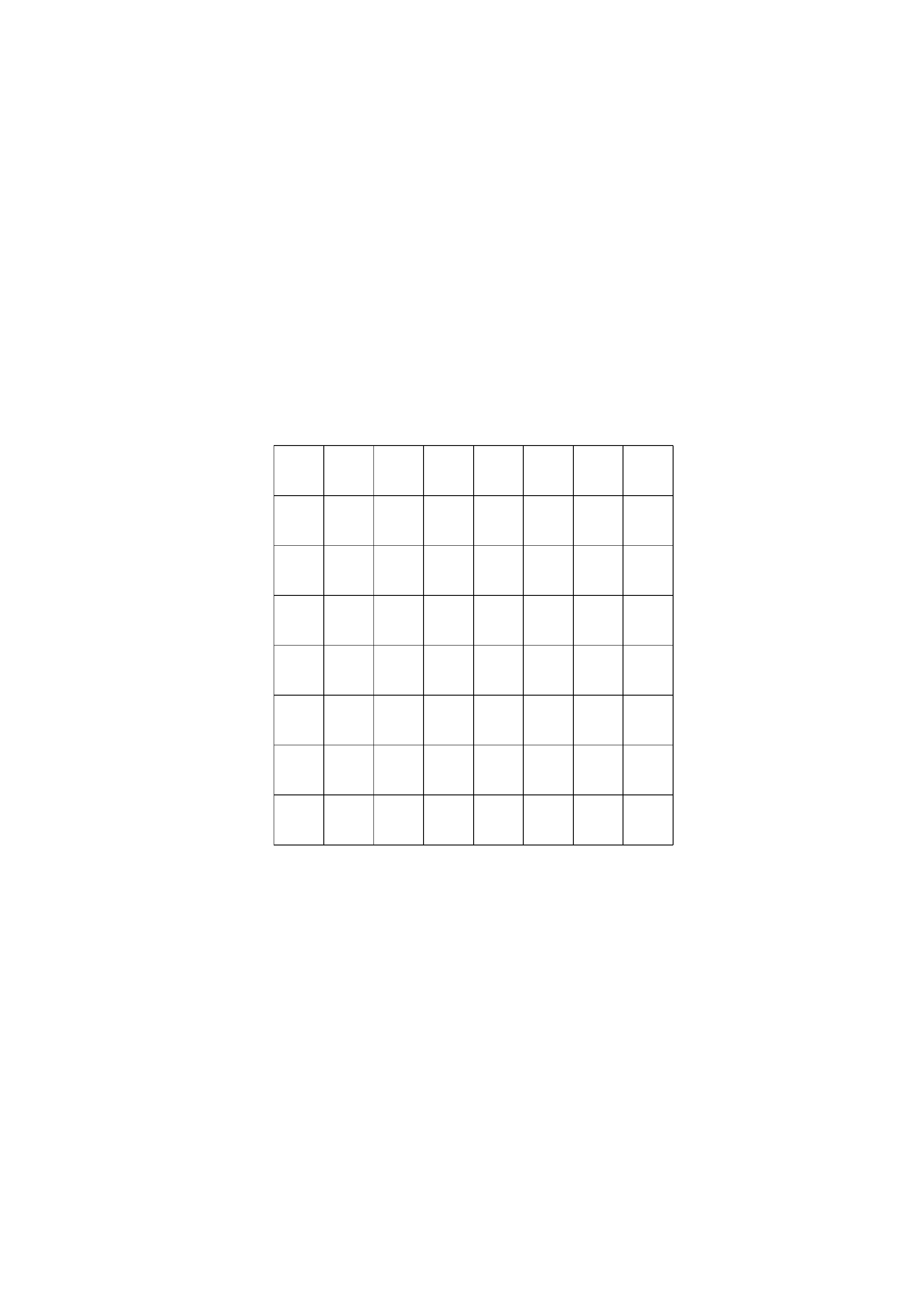}}
	\subfigure[HexS]{\includegraphics[bb = 170 290 600 600,clip,scale=0.3]{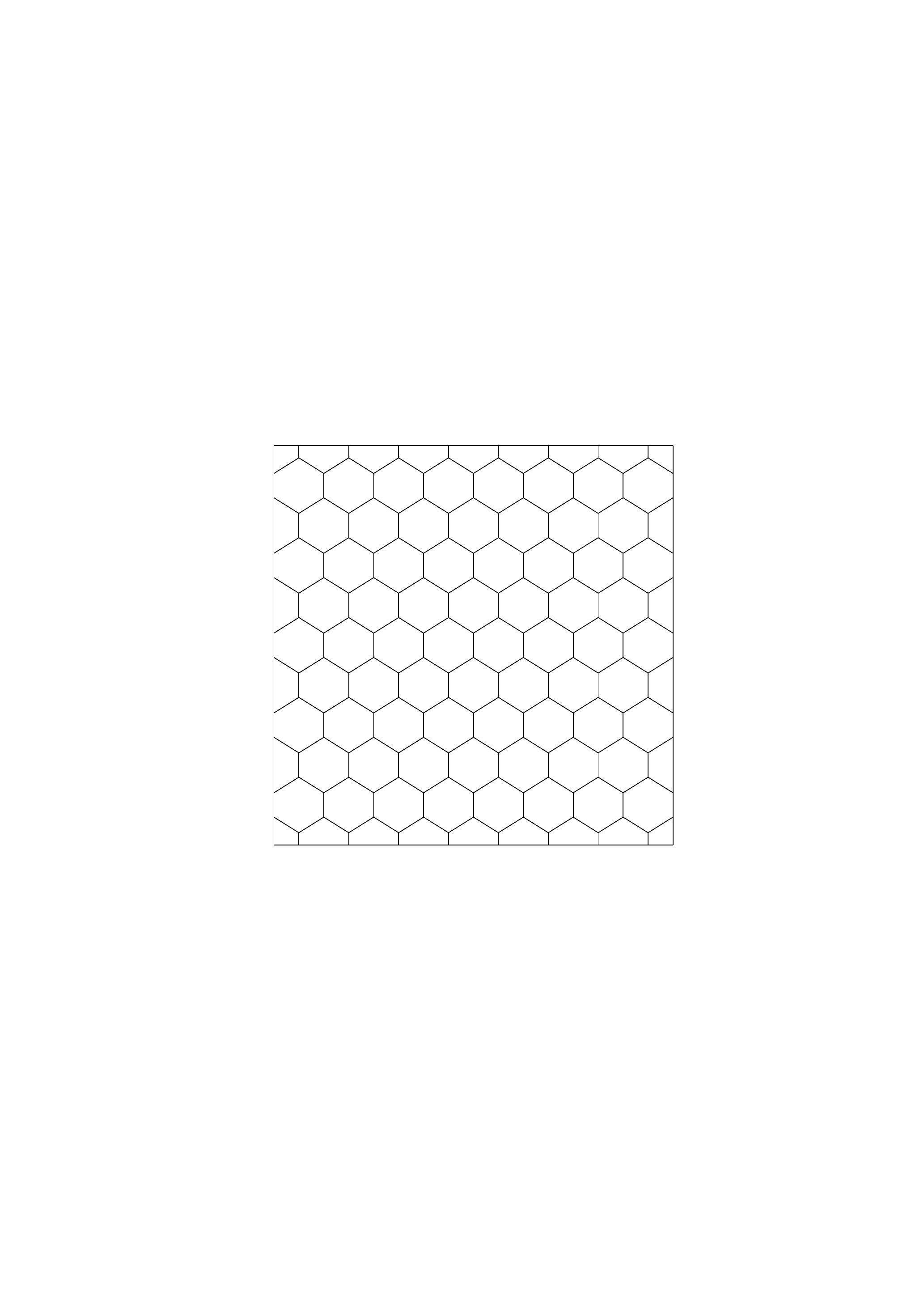}}
	\subfigure[ConcS]{\includegraphics[bb = 170 290 600 600,clip,scale=0.3]{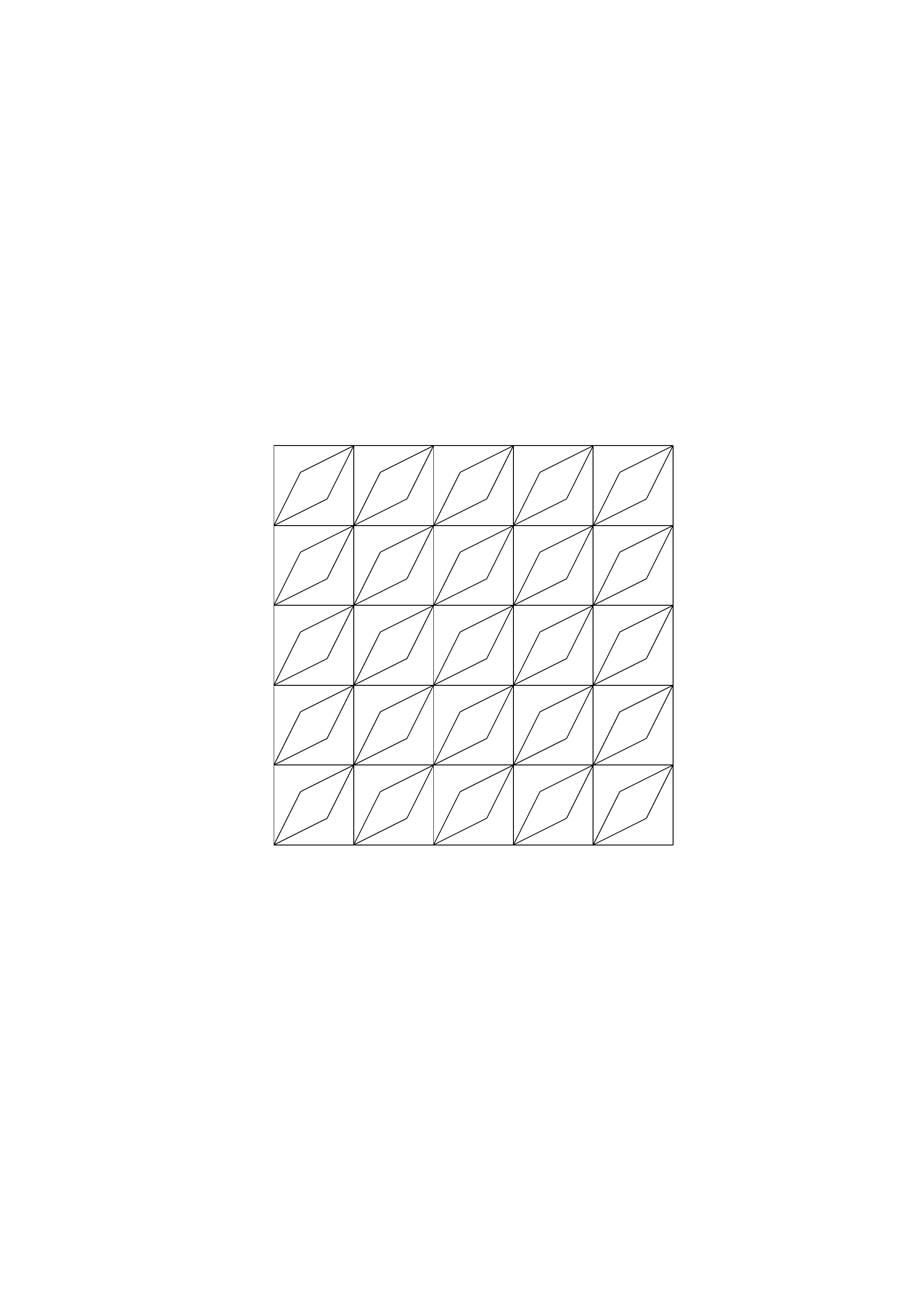}}
	\subfigure[TriU]{\includegraphics[bb = 120 290 600 600,clip,scale=0.3]{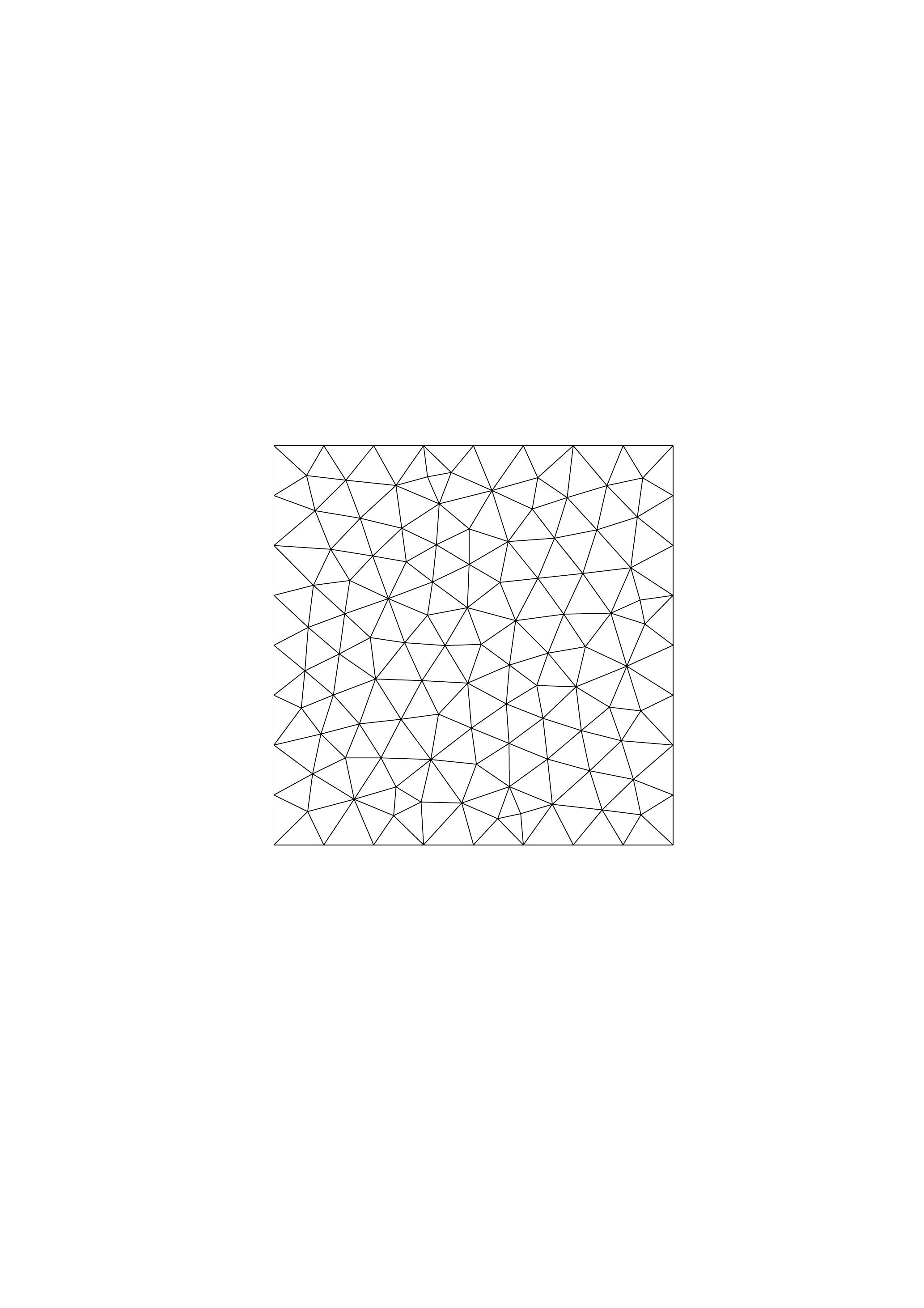}}
	\subfigure[QuadU]{\includegraphics[bb = 170 290 600 600,clip,scale=0.3]{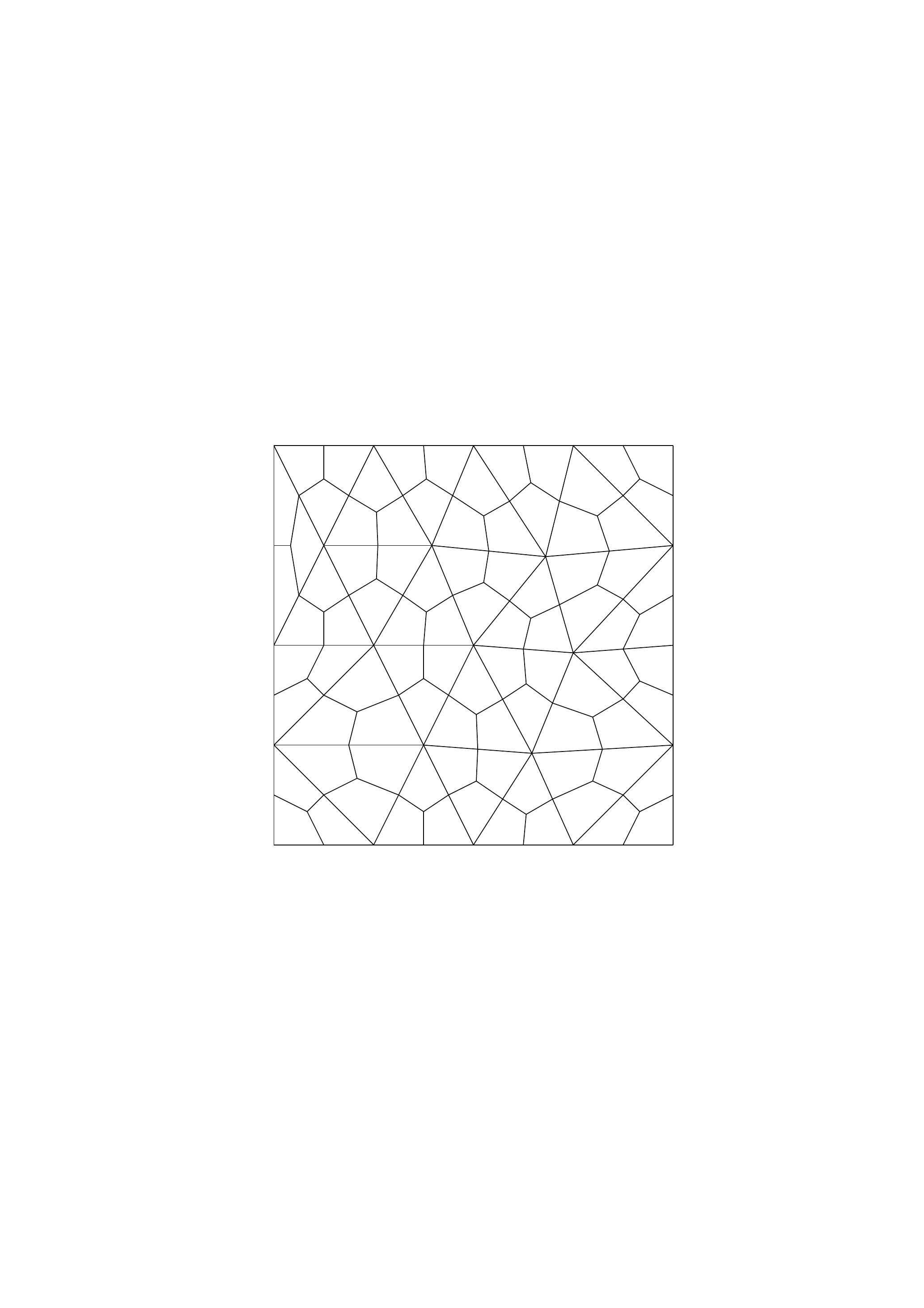}}
	\subfigure[voroU]{\includegraphics[bb = 170 290 600 600,clip,scale=0.3]{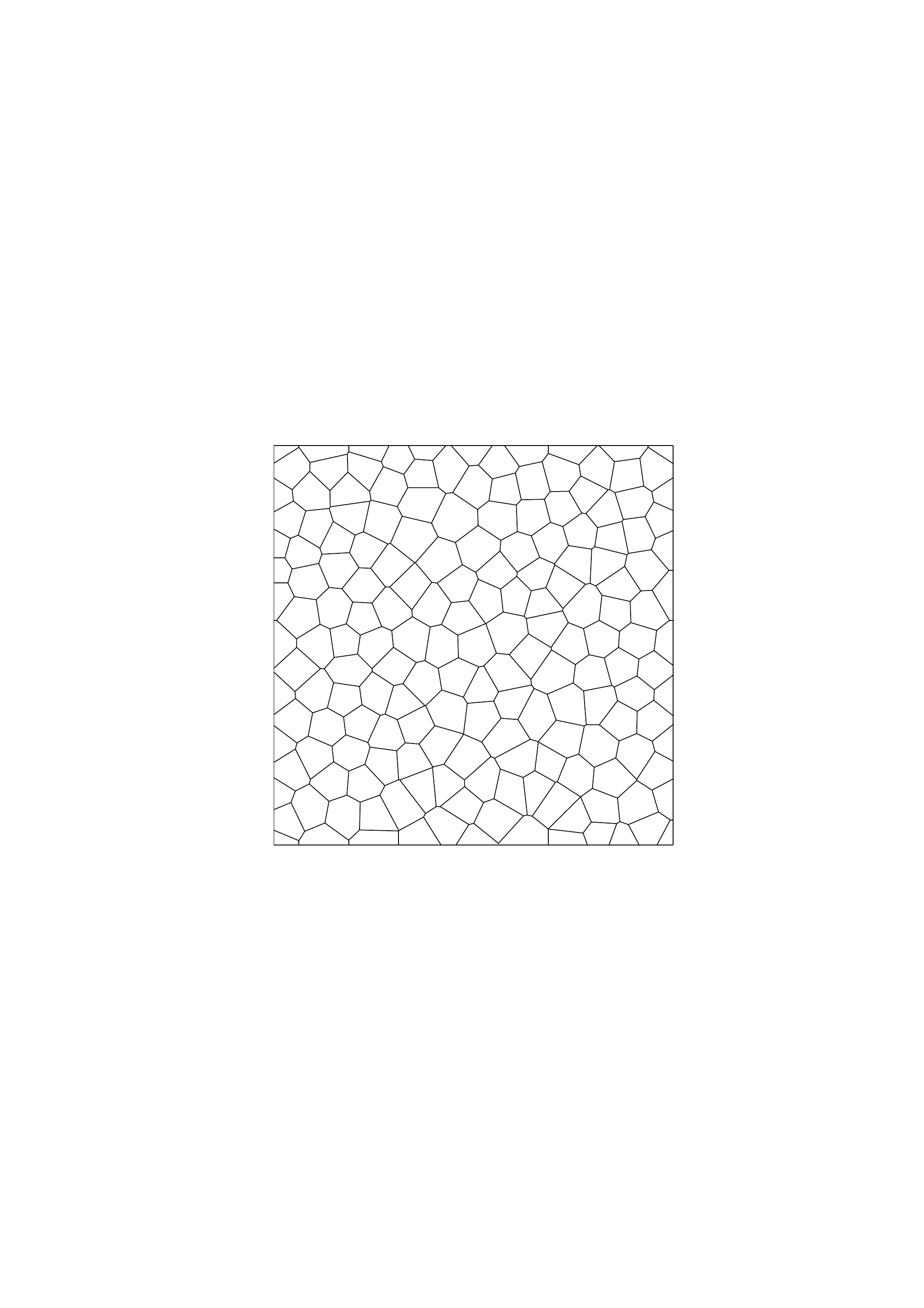}}
	\caption{Mesh types and labels for Test a and Test b. Upper row - Structured: Quad/Hexagon/Concave polygons.
Lower row - Unstructured: Tri/Quad/Voronoi polygons.}
	\label{fig:mesh}
\end{figure}

\clearpage
\newpage
\begin{figure}
\subfigure
{\includegraphics[bb = 135 260 550 556,clip,scale=0.55]
                {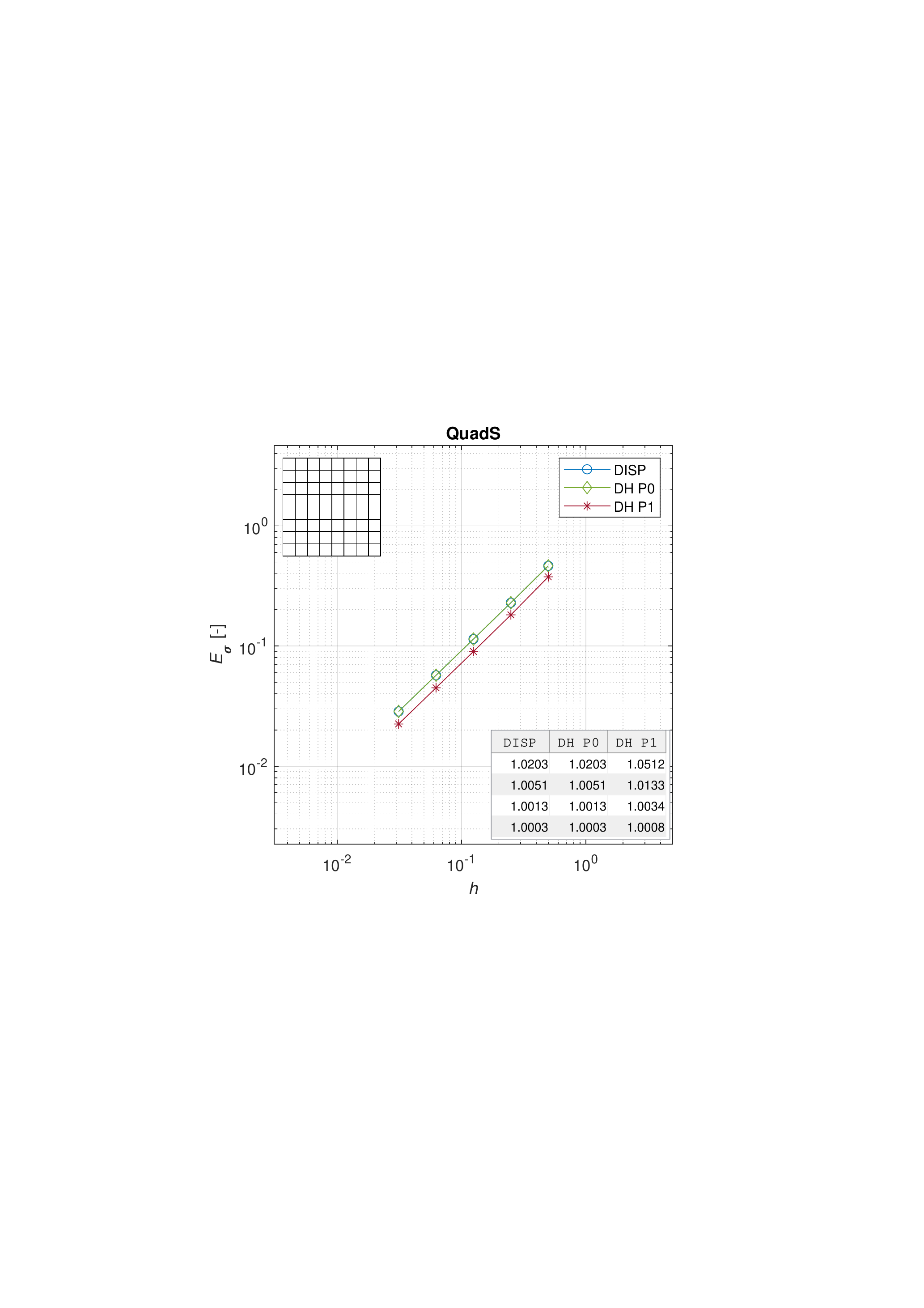}}
\subfigure
{\includegraphics[bb = 150 260 600 556,clip,scale=0.55]
                {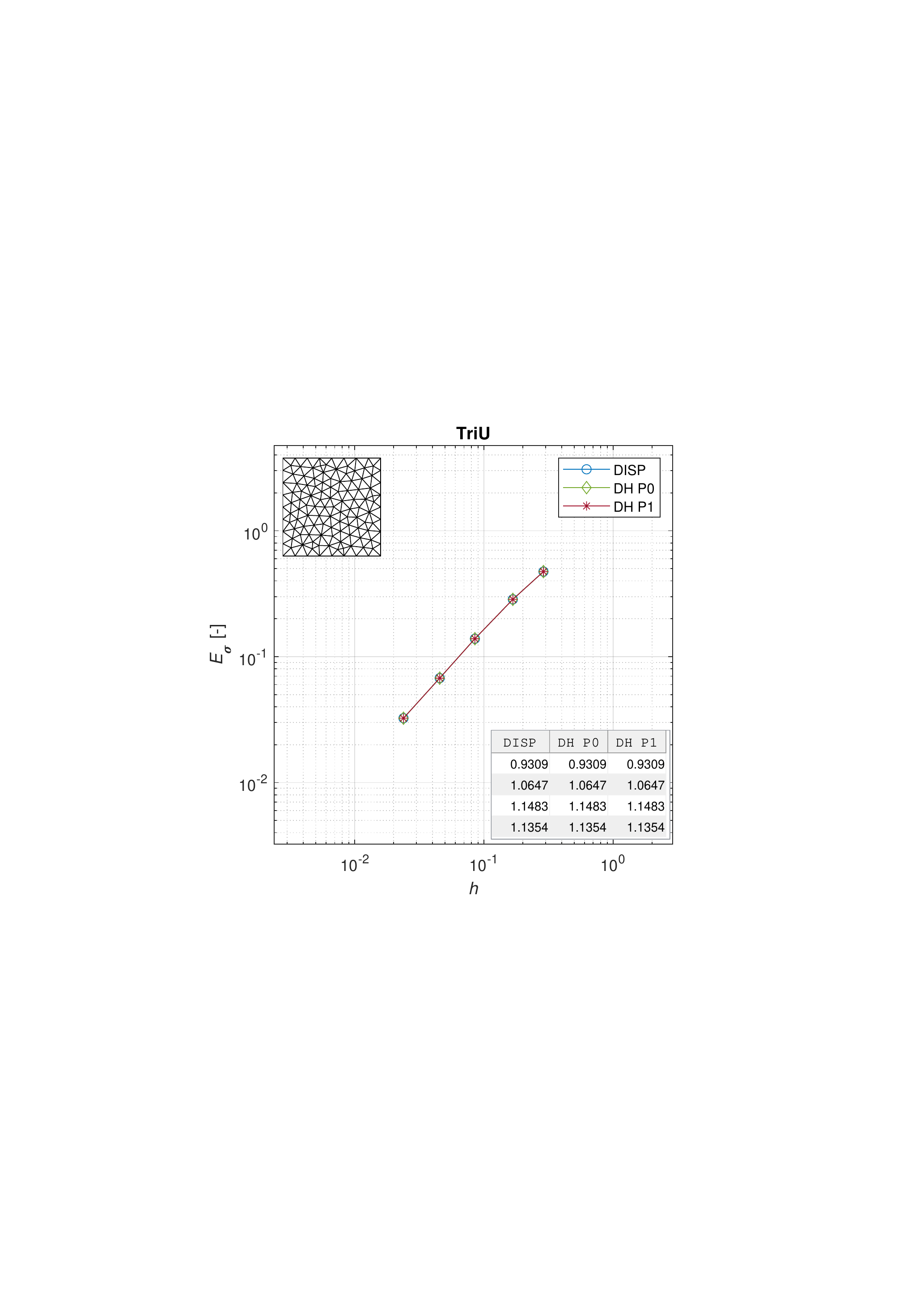}}
\subfigure
{\includegraphics[bb = 135 260 550 556,clip,scale=0.55]
                {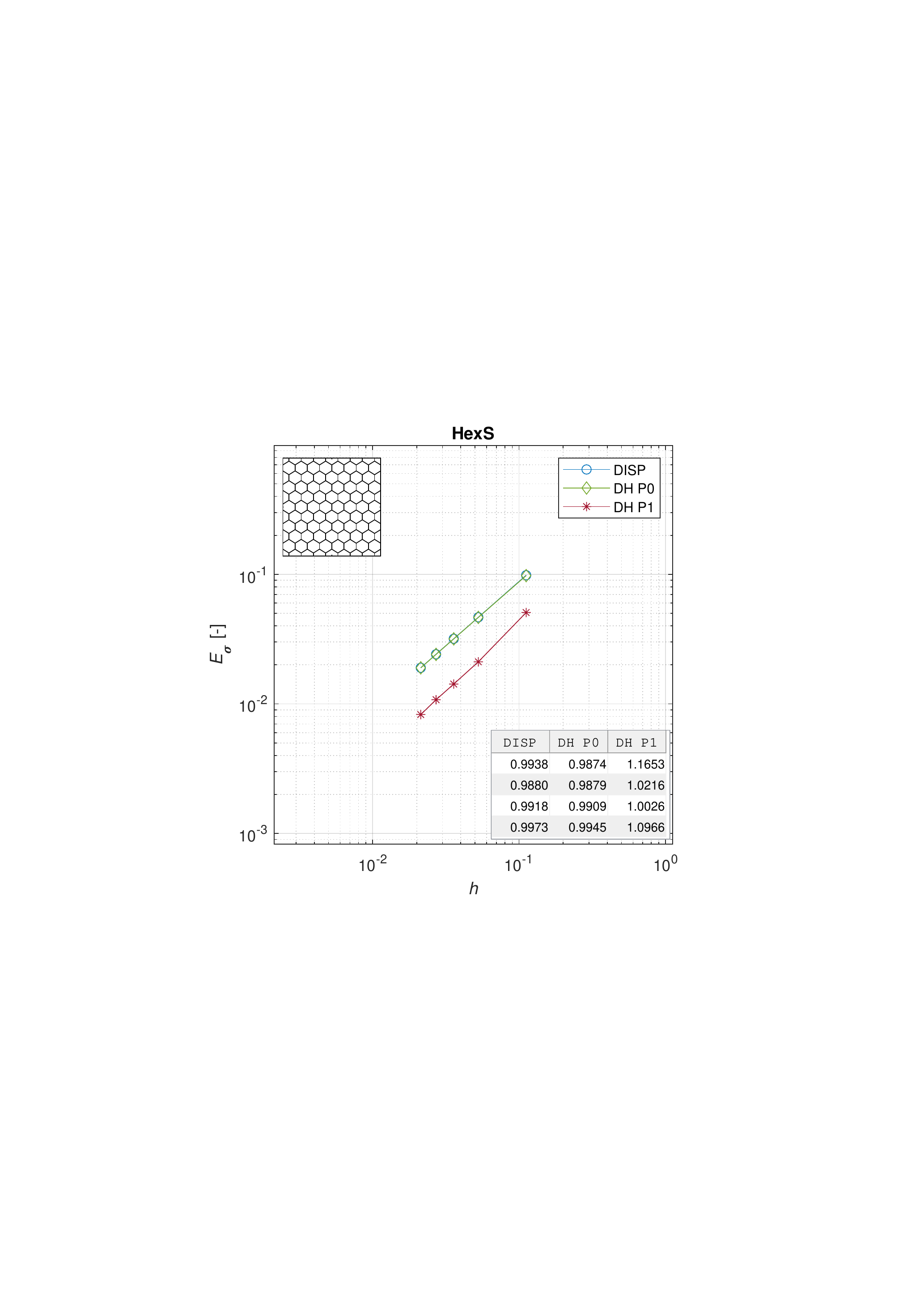}}
\subfigure
{\includegraphics[bb = 150 260 600 556,clip,scale=0.55]
                {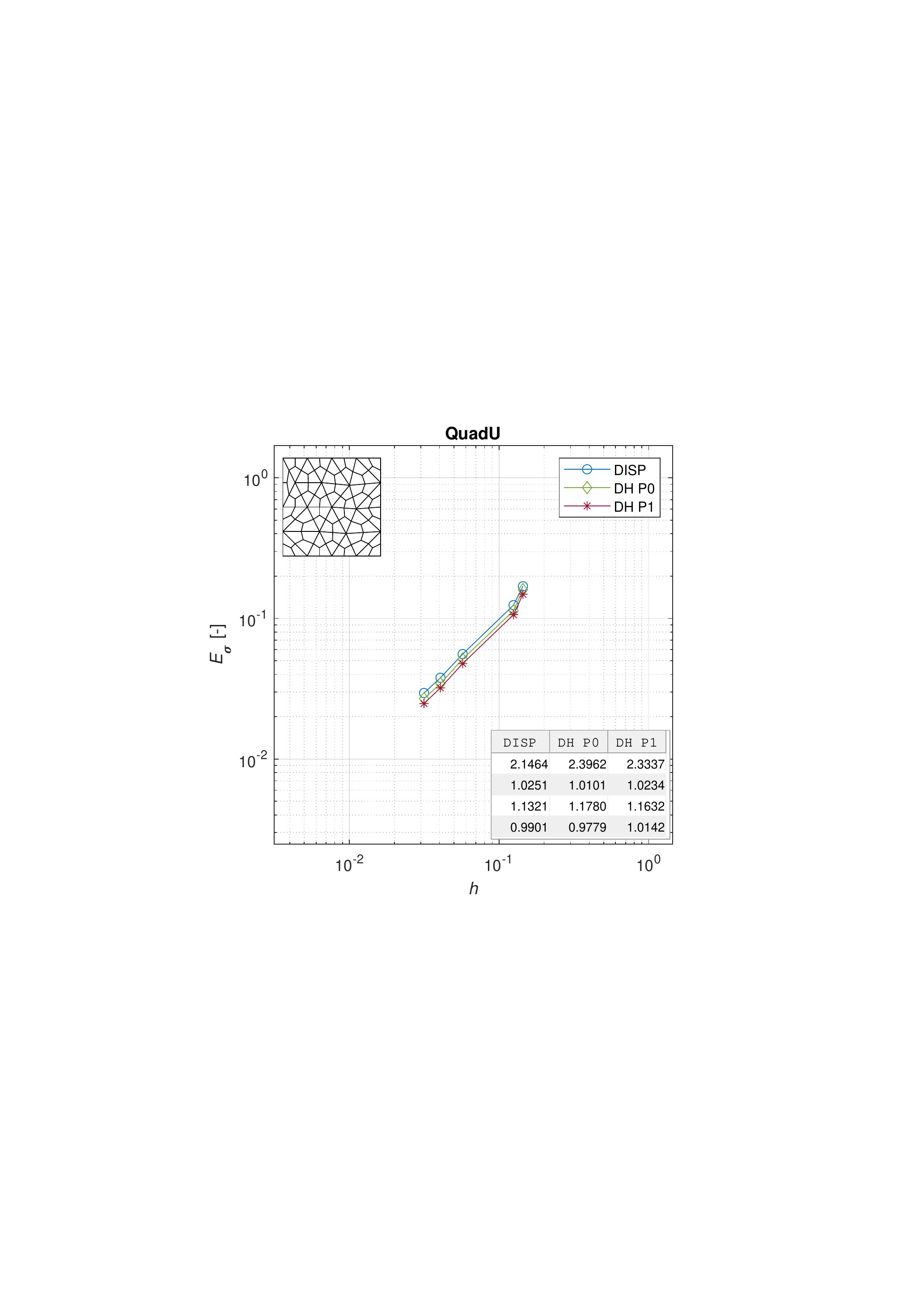}}
\subfigure
{\includegraphics[bb = 135 260 550 556,clip,scale=0.55]
                {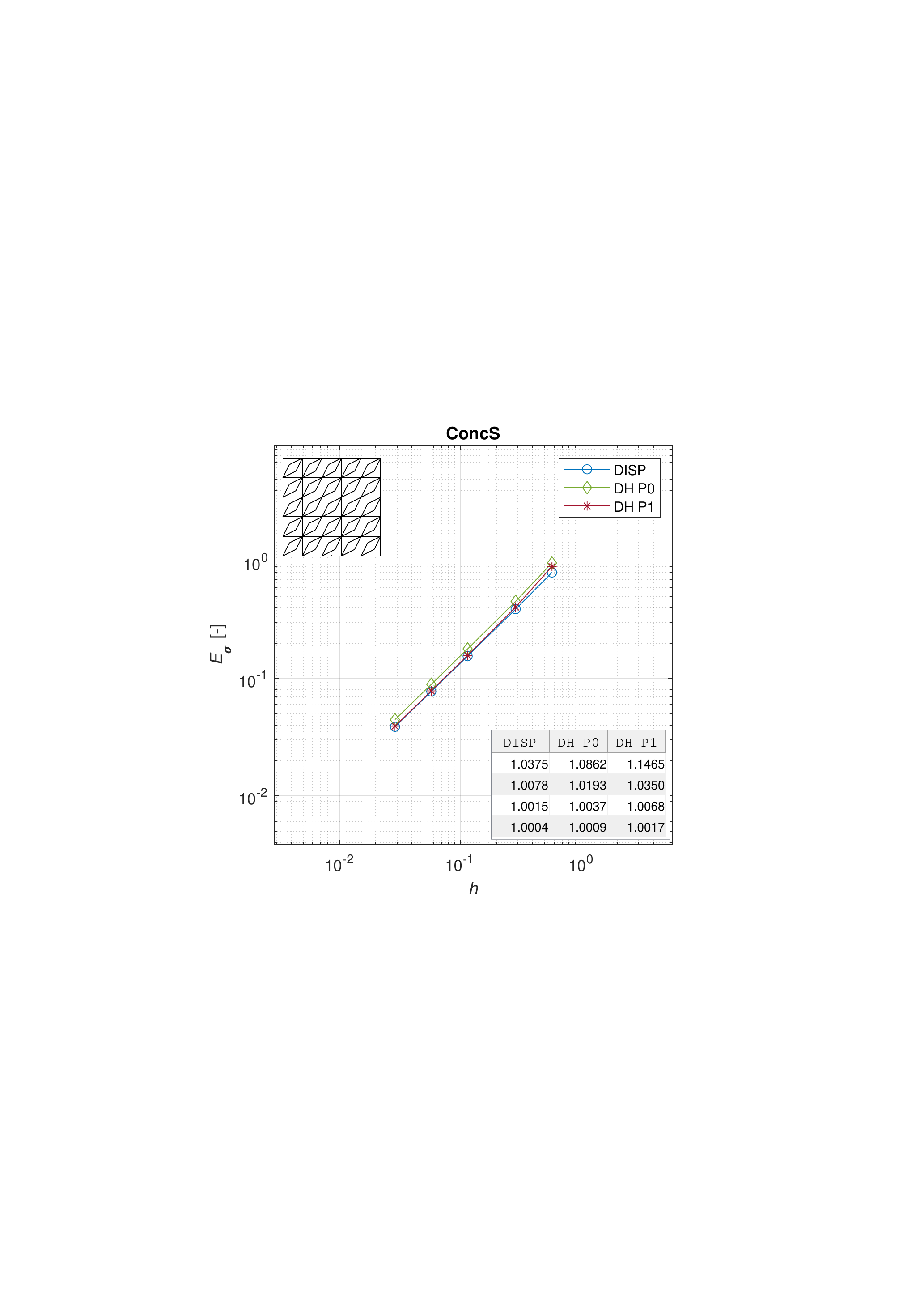}}
\subfigure
{\includegraphics[bb = 150 260 600 556,clip,scale=0.55]
                {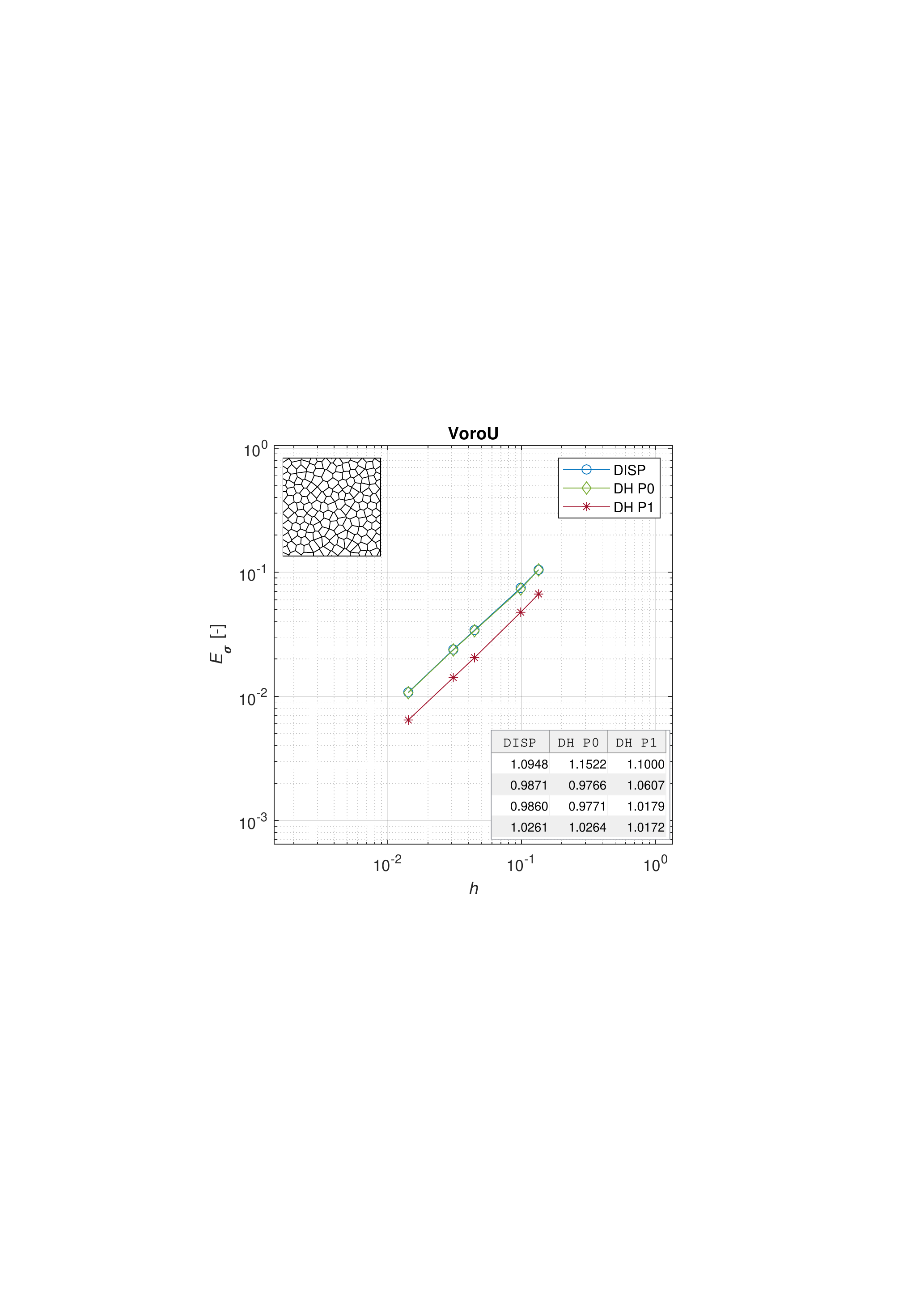}}
\caption{Test a - $E_{\bfsigma}$ {\it vs.} $h$ curves with branch slopes. Structured mesh - left. Unstructured mesh - right.}
\label{fig:Test_a_E_sig}
\end{figure}

\clearpage
\newpage
\begin{figure}
\subfigure
{\includegraphics[bb = 135 260 550 556,clip,scale=0.55]
                {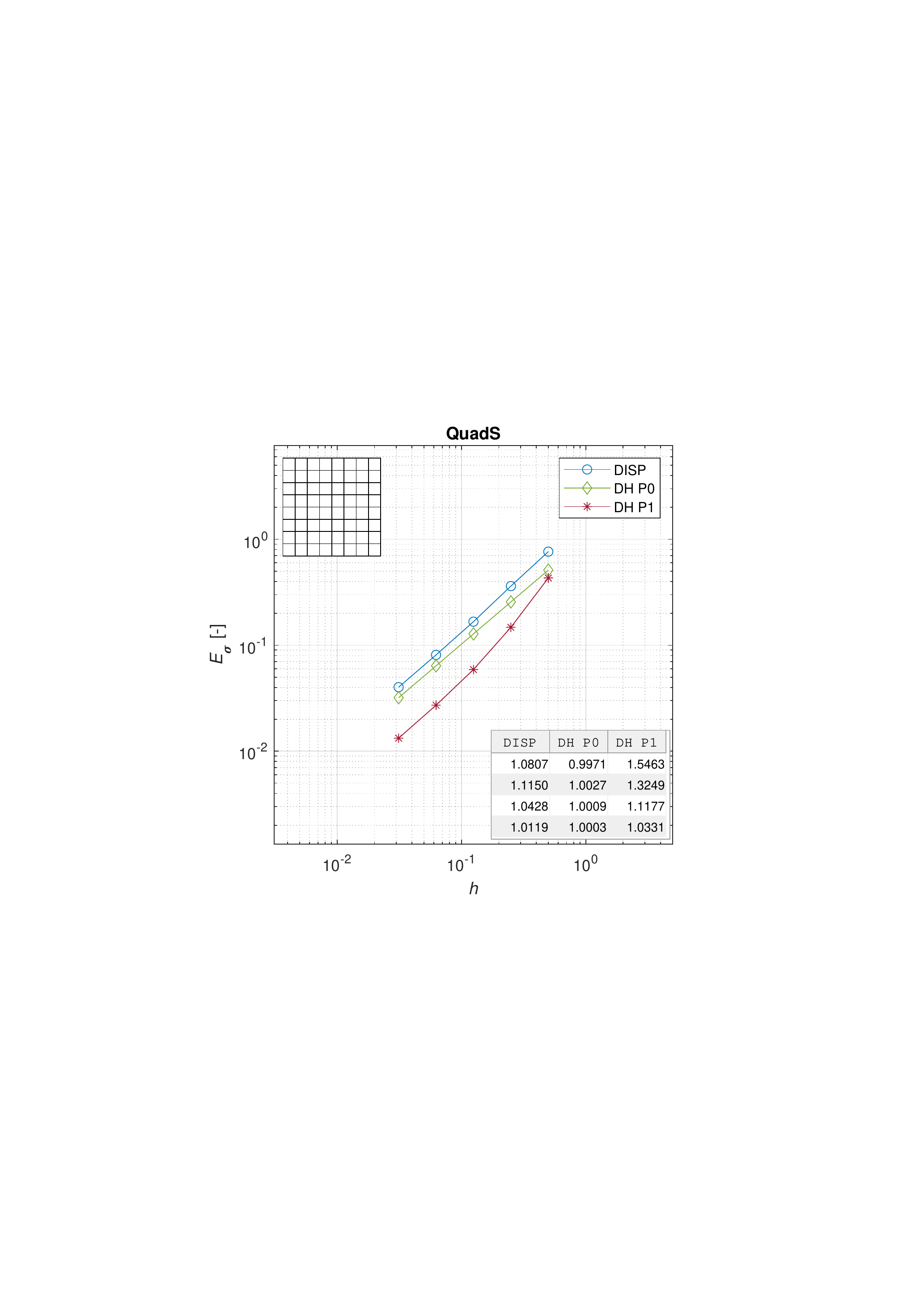}}
\subfigure
{\includegraphics[bb = 150 260 600 556,clip,scale=0.55]
                {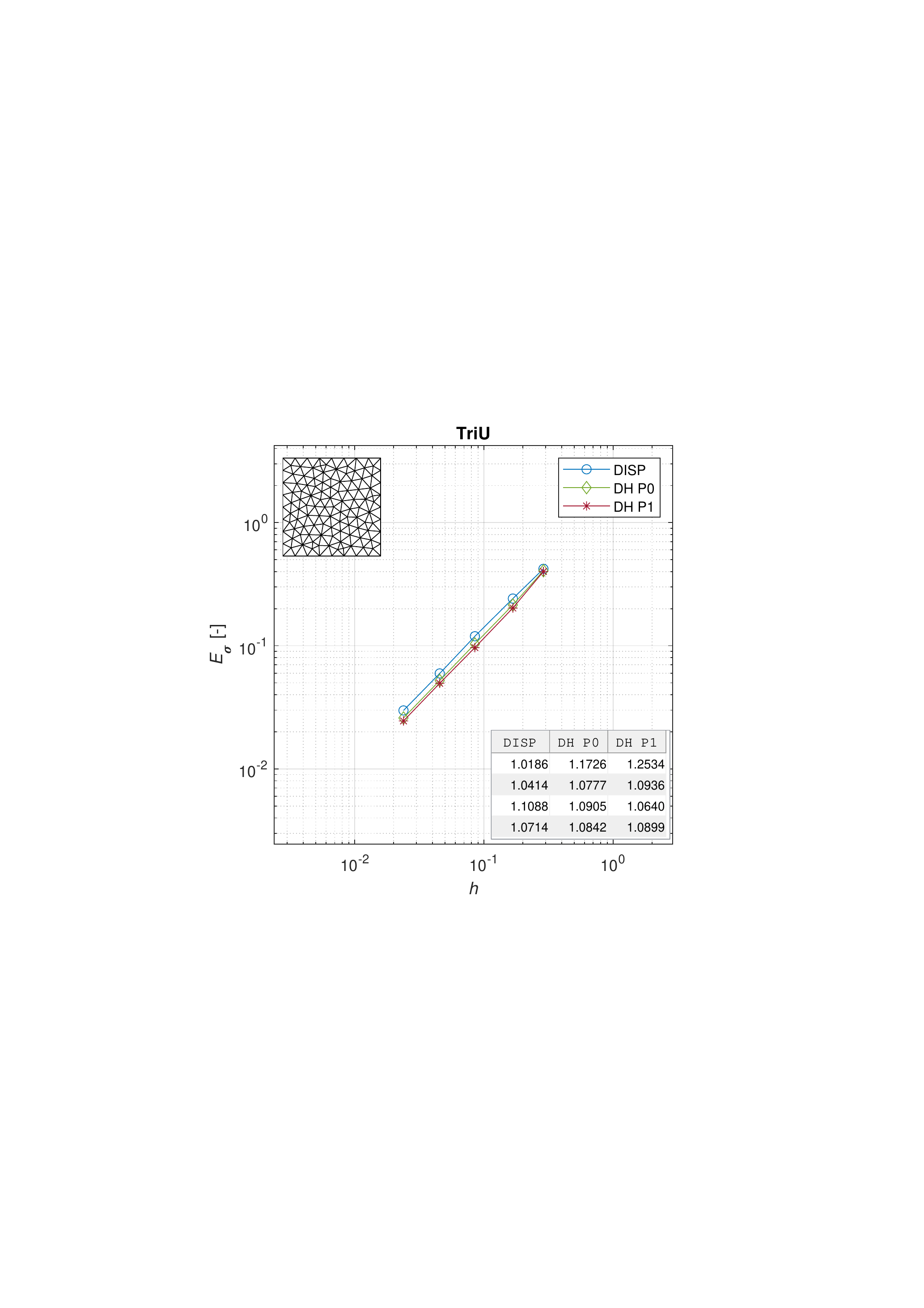}}
\subfigure
{\includegraphics[bb = 135 260 550 556,clip,scale=0.55]
                {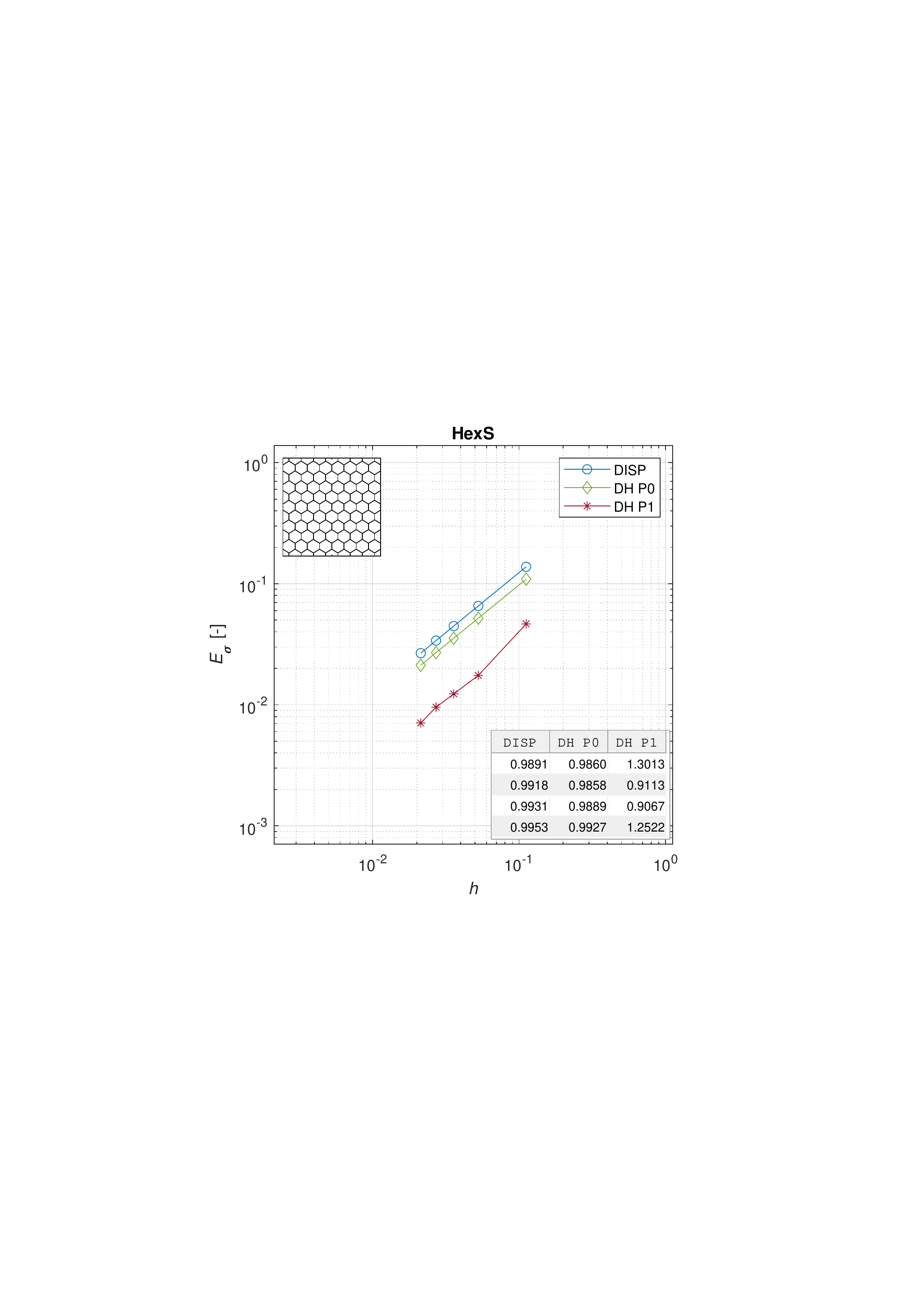}}
\subfigure
{\includegraphics[bb = 150 260 600 556,clip,scale=0.55]
                {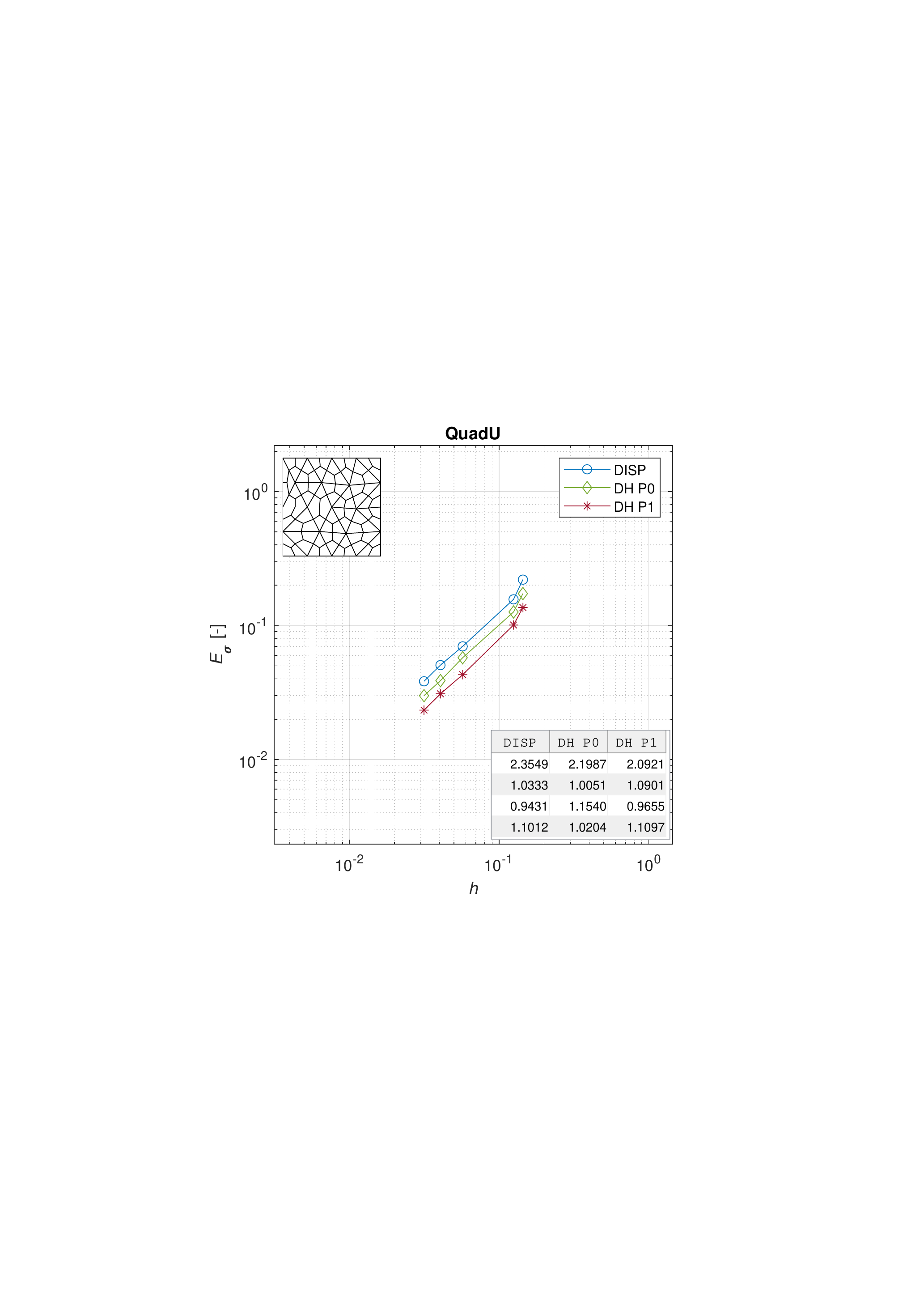}}
\subfigure
{\includegraphics[bb = 135 260 550 556,clip,scale=0.55]
                {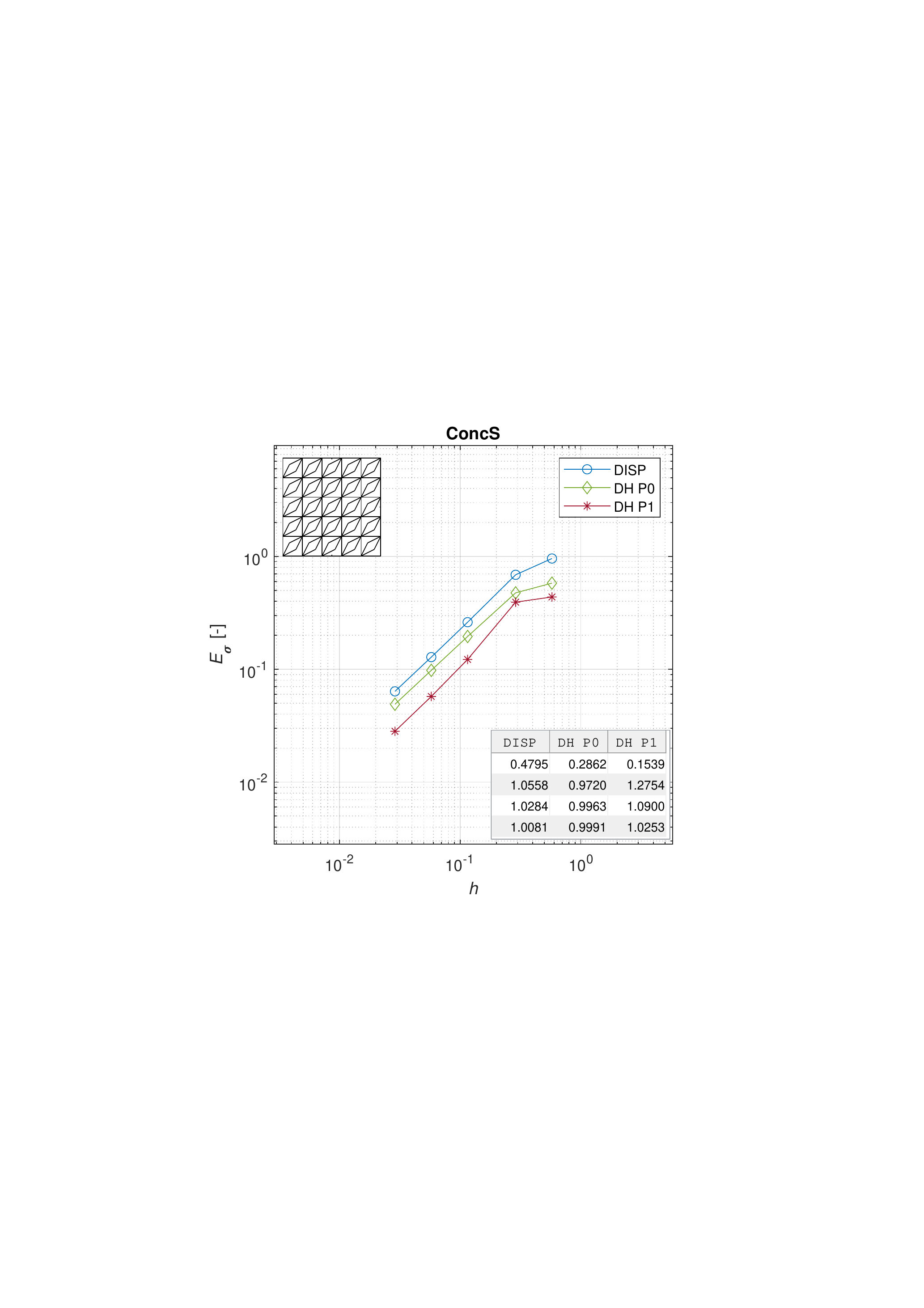}}
\subfigure
{\includegraphics[bb = 150 260 600 556,clip,scale=0.55]
                {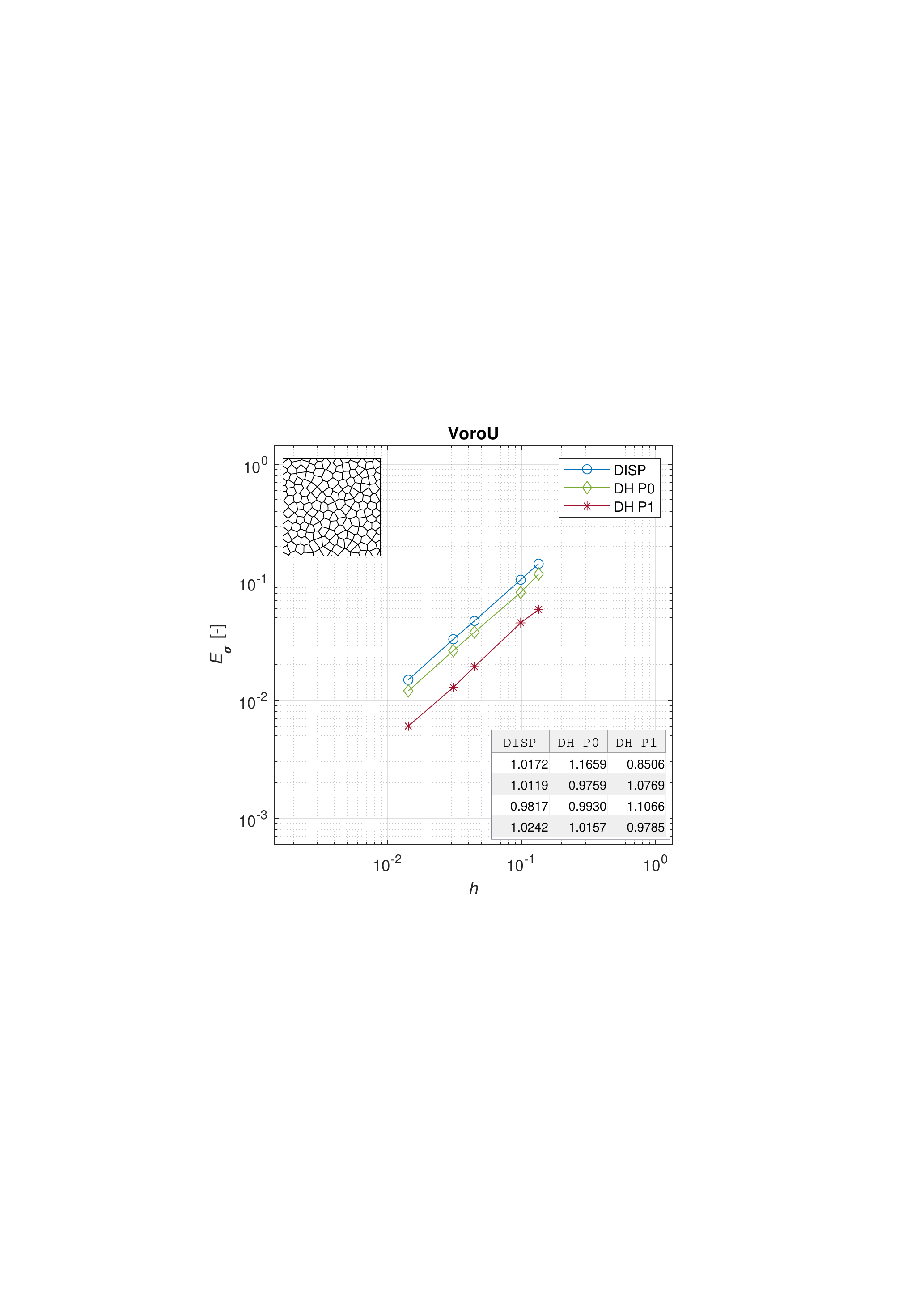}}
\caption{Test b - $E_{\bfsigma}$ {\it vs.} $h$ curves with branch slopes. Structured mesh - left. Unstructured mesh - right.}
\label{fig:Test_b_E_sig}
\end{figure}

\clearpage
\newpage
\begin{figure}
\subfigure
{\includegraphics[bb = 135 260 550 556,clip,scale=0.55]
                {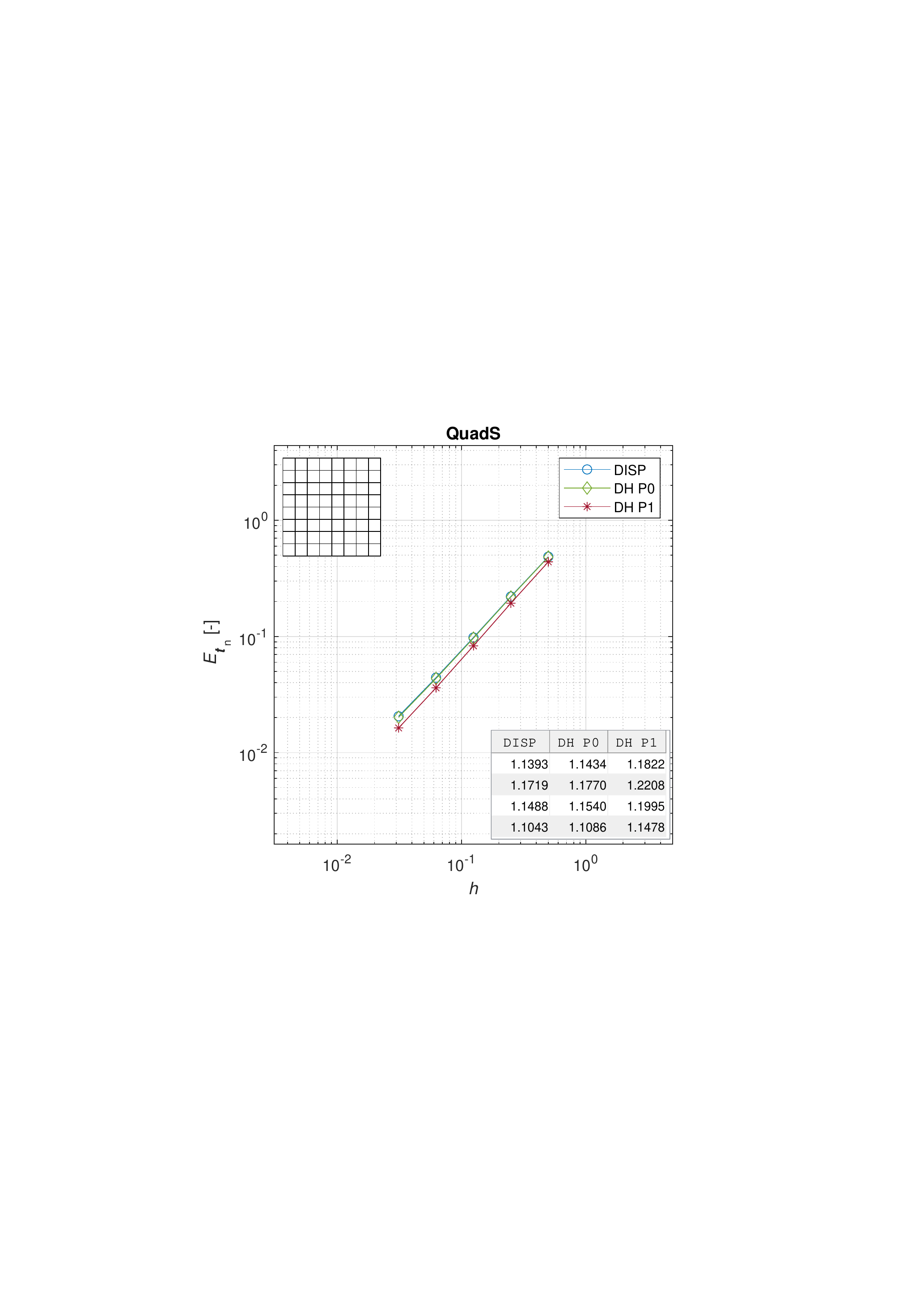}}
\subfigure
{\includegraphics[bb = 150 260 600 556,clip,scale=0.55]
                {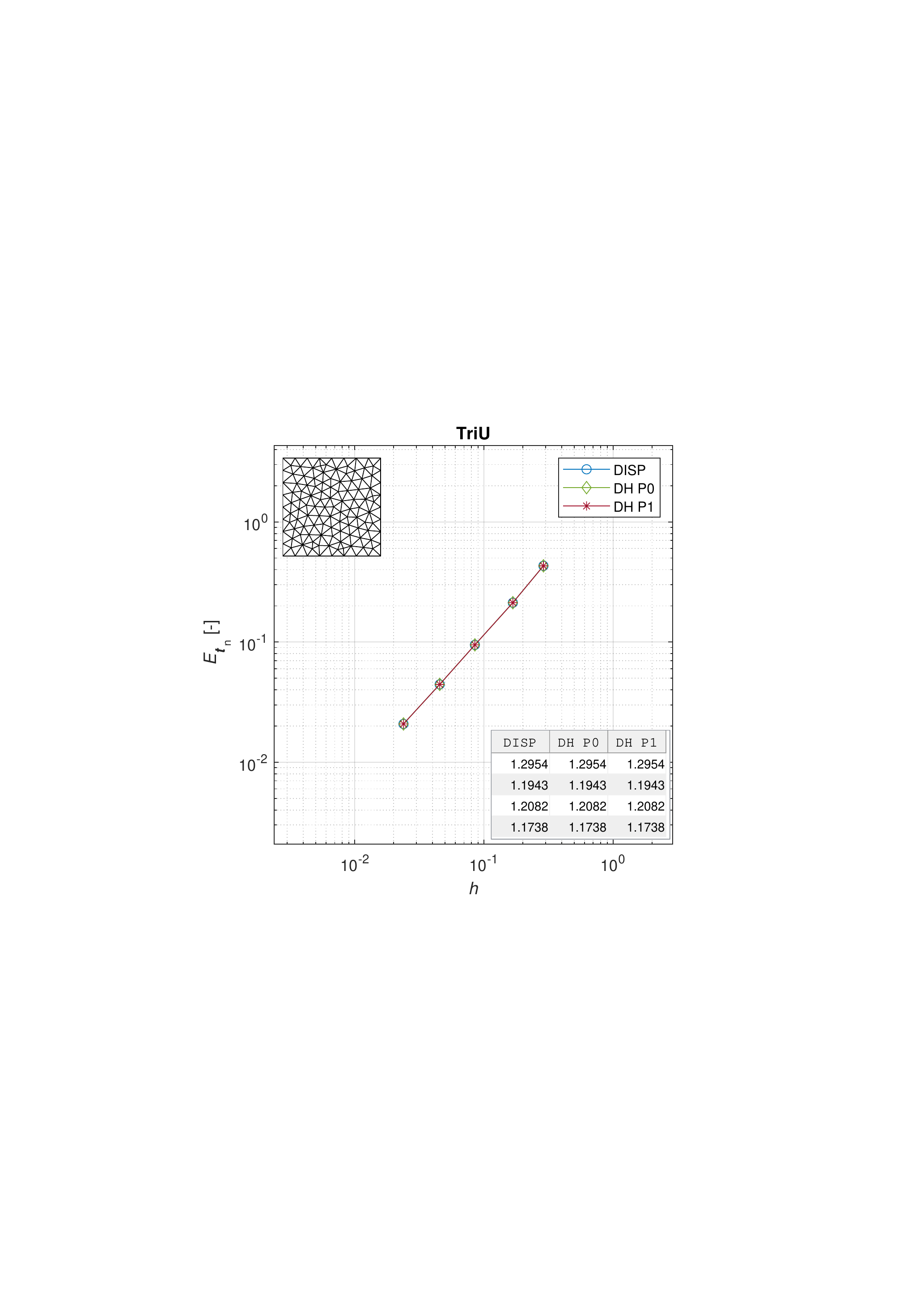}}
\subfigure
{\includegraphics[bb = 135 260 550 556,clip,scale=0.55]
                {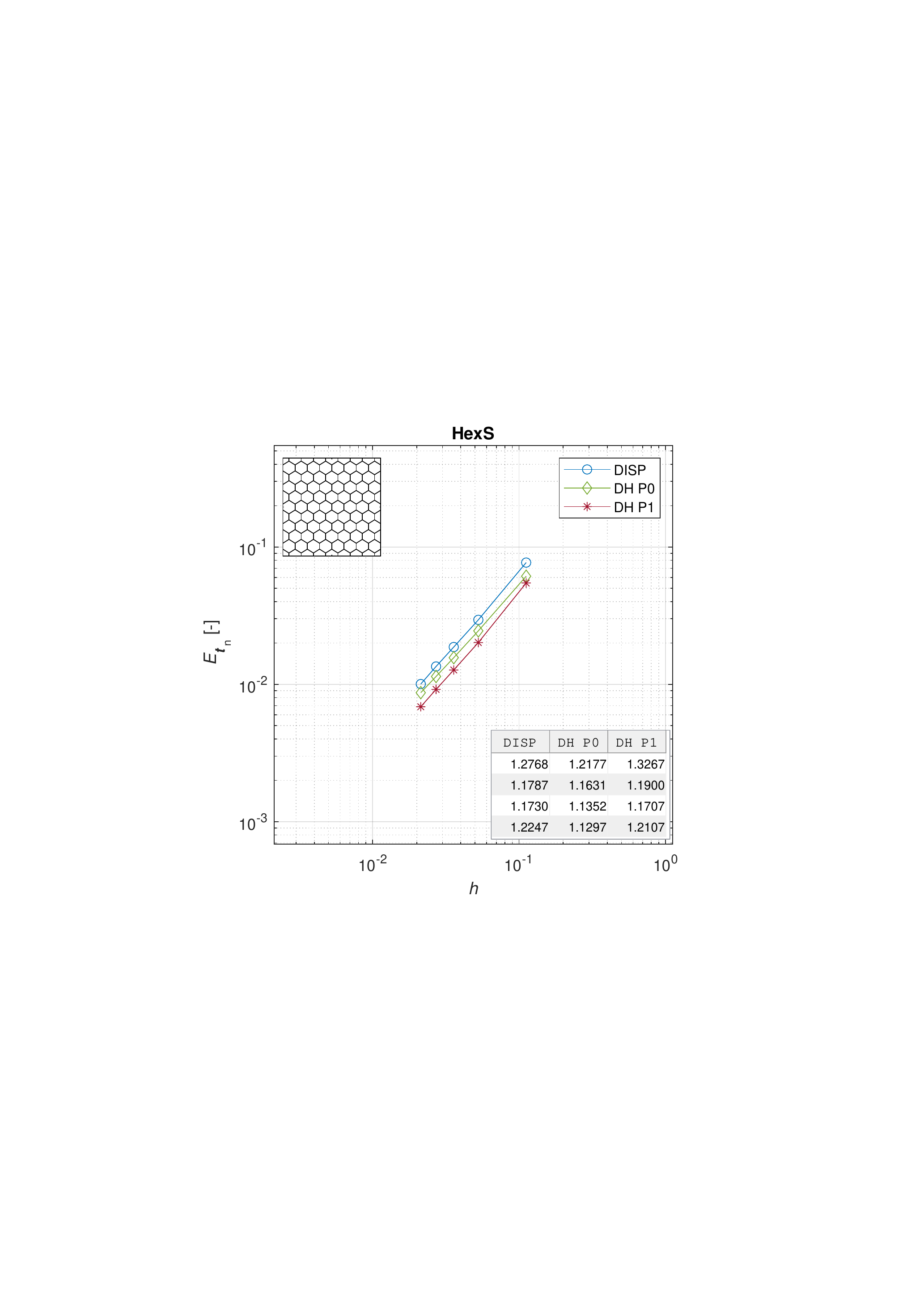}}
\subfigure
{\includegraphics[bb = 150 260 600 556,clip,scale=0.55]
                {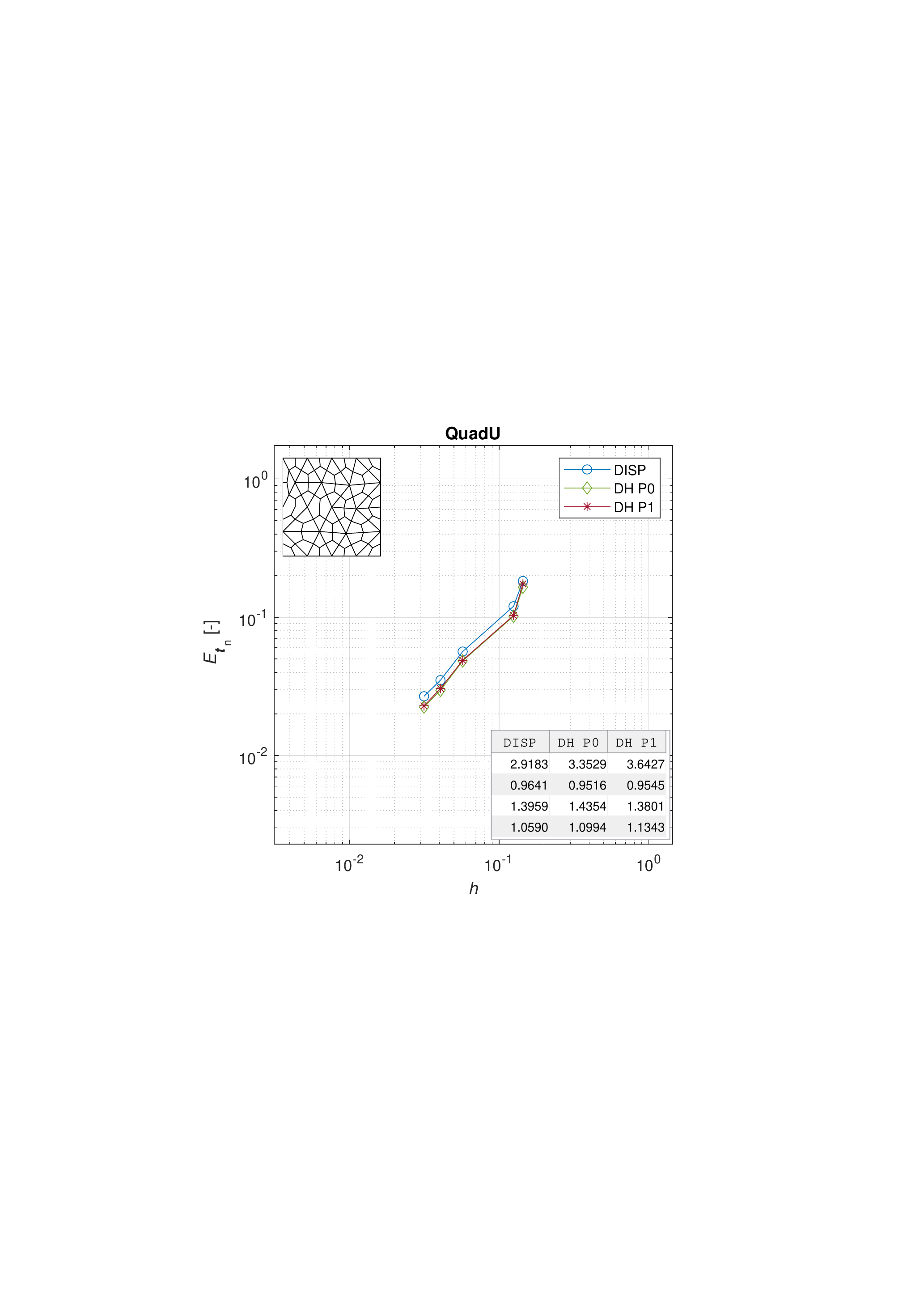}}
\subfigure
{\includegraphics[bb = 135 260 550 556,clip,scale=0.55]
                {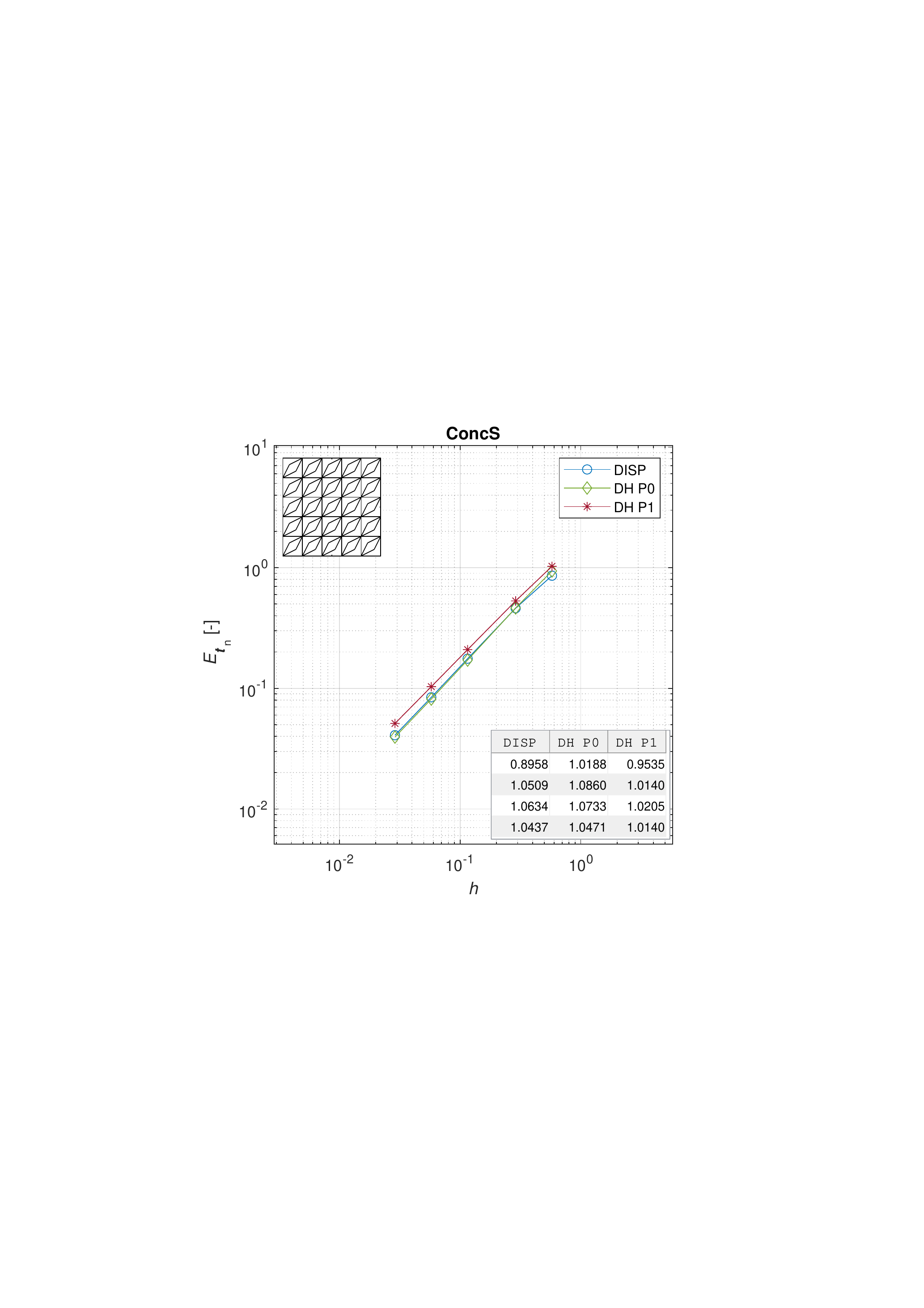}}
\subfigure
{\includegraphics[bb = 150 260 600 556,clip,scale=0.55]
                {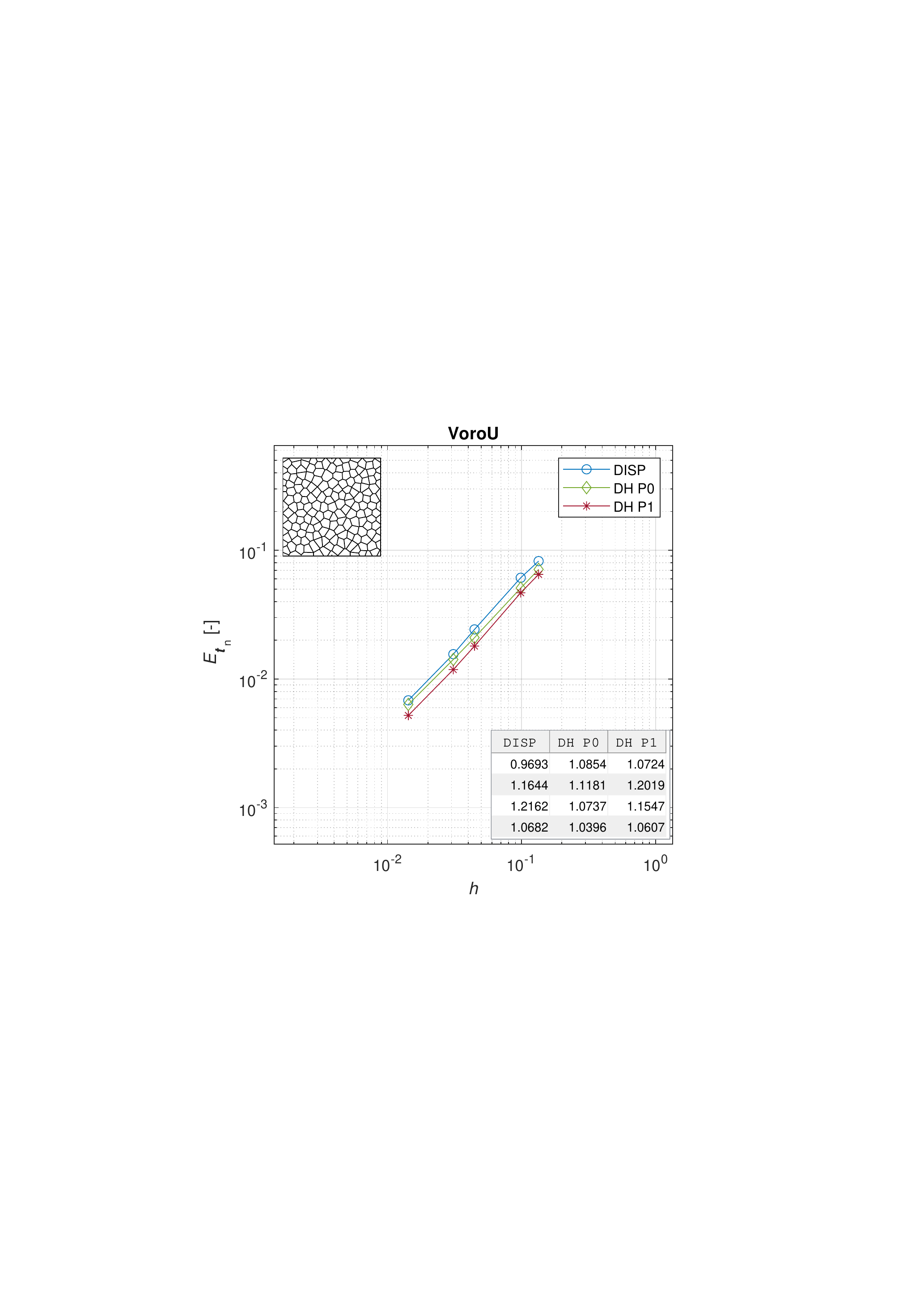}}
\caption{Test a - $E_{\bbt_{\bbn}}$ {\it vs.} $h$ curves with branch slopes. Structured mesh - left. Unstructured mesh - right.}
\label{fig:Test_a_E_tn}
\end{figure}

\clearpage
\newpage
\begin{figure}
\subfigure
{\includegraphics[bb = 135 260 550 556,clip,scale=0.55]
                {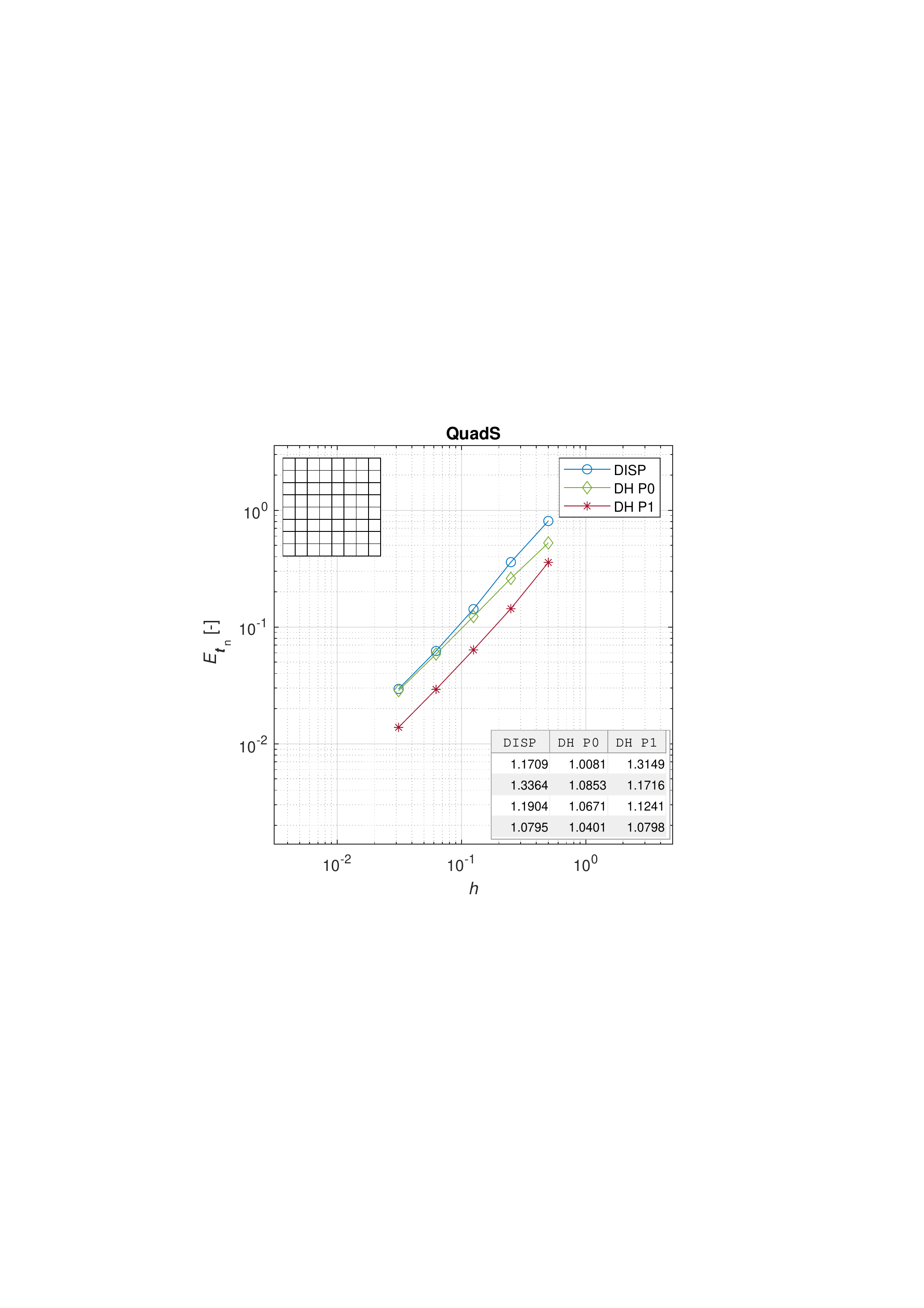}}
\subfigure
{\includegraphics[bb = 150 260 600 556,clip,scale=0.55]
                {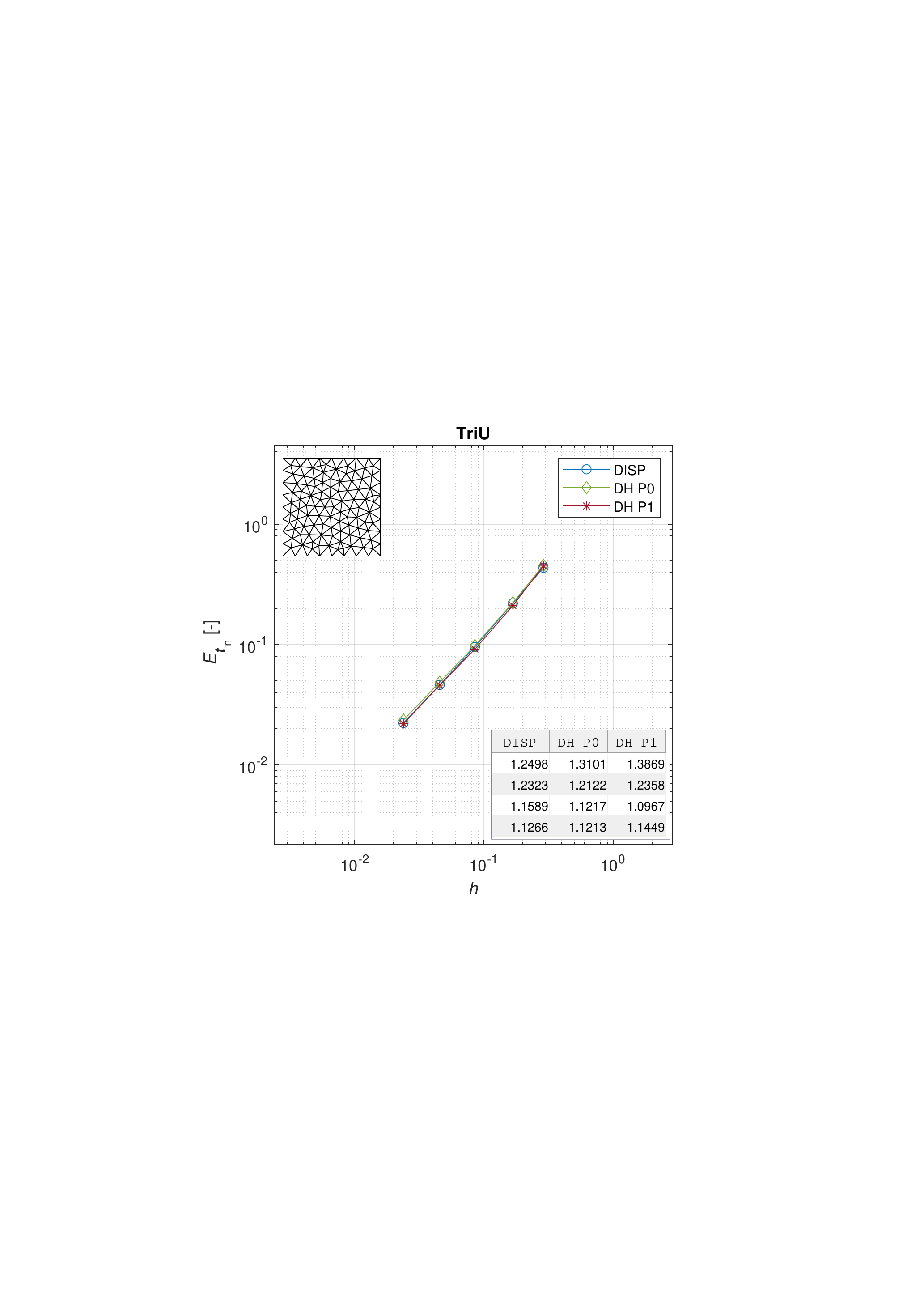}}
\subfigure
{\includegraphics[bb = 135 260 550 556,clip,scale=0.55]
                {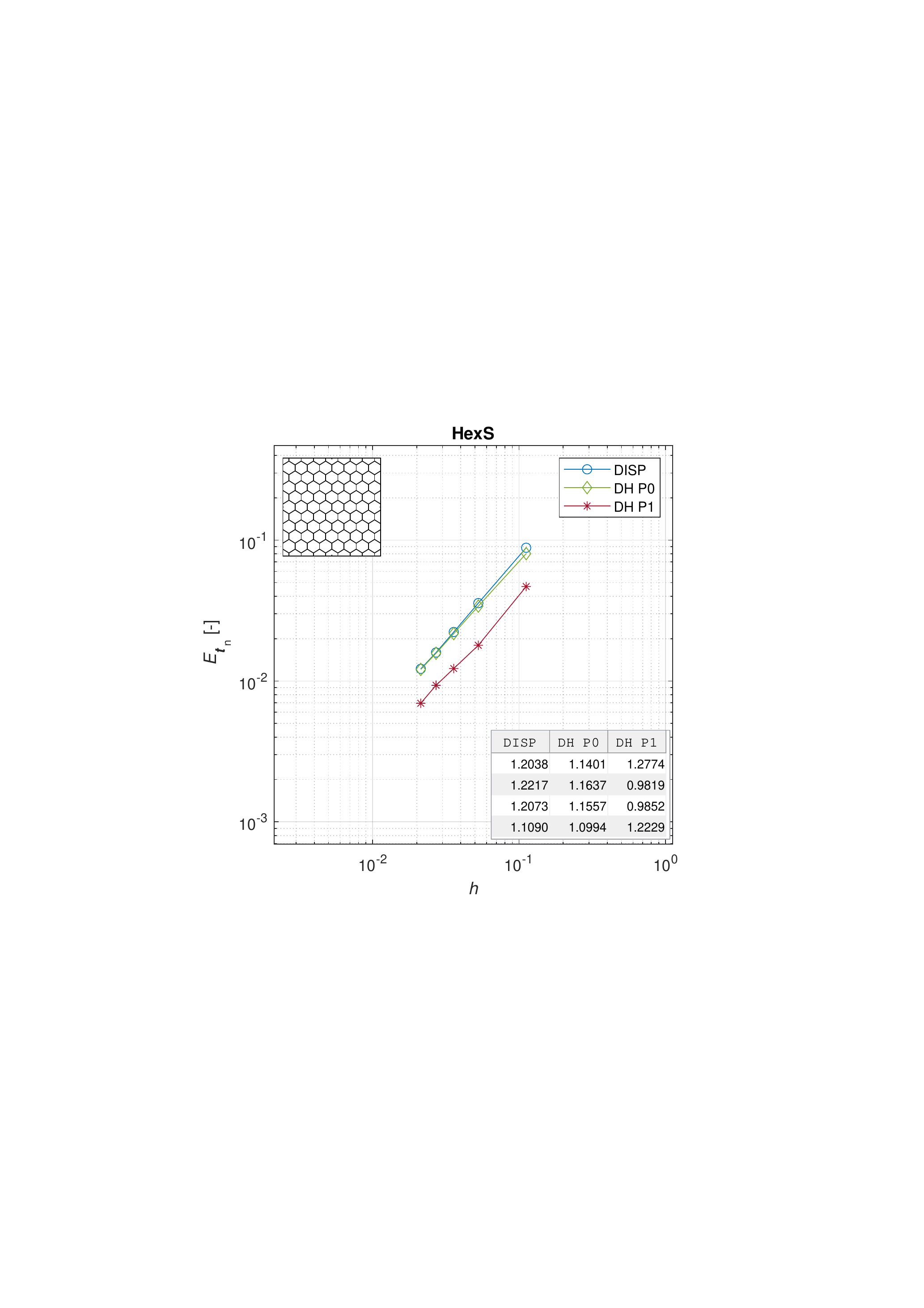}}
\subfigure
{\includegraphics[bb = 150 260 600 556,clip,scale=0.55]
                {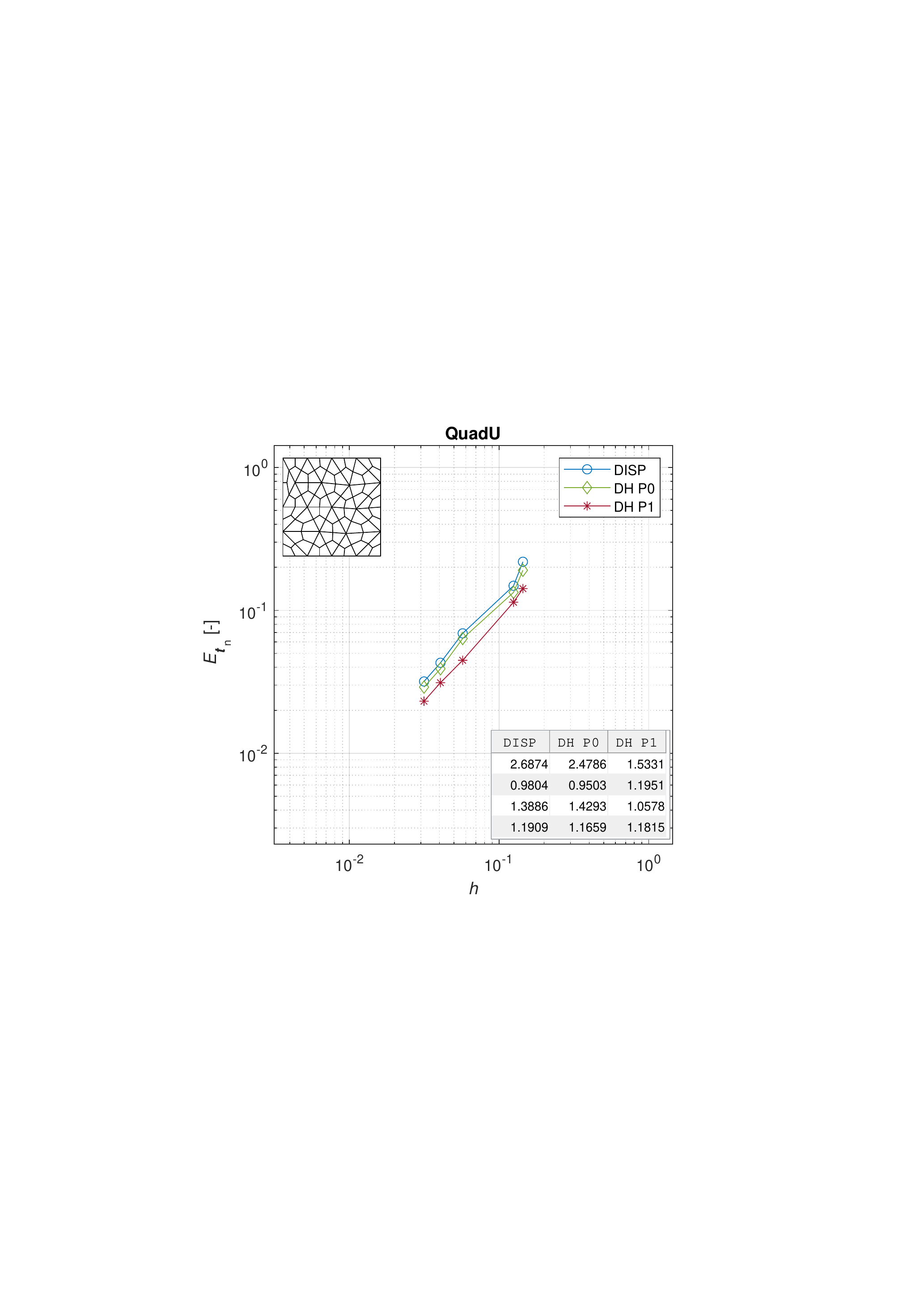}}
\subfigure
{\includegraphics[bb = 135 260 550 556,clip,scale=0.55]
                {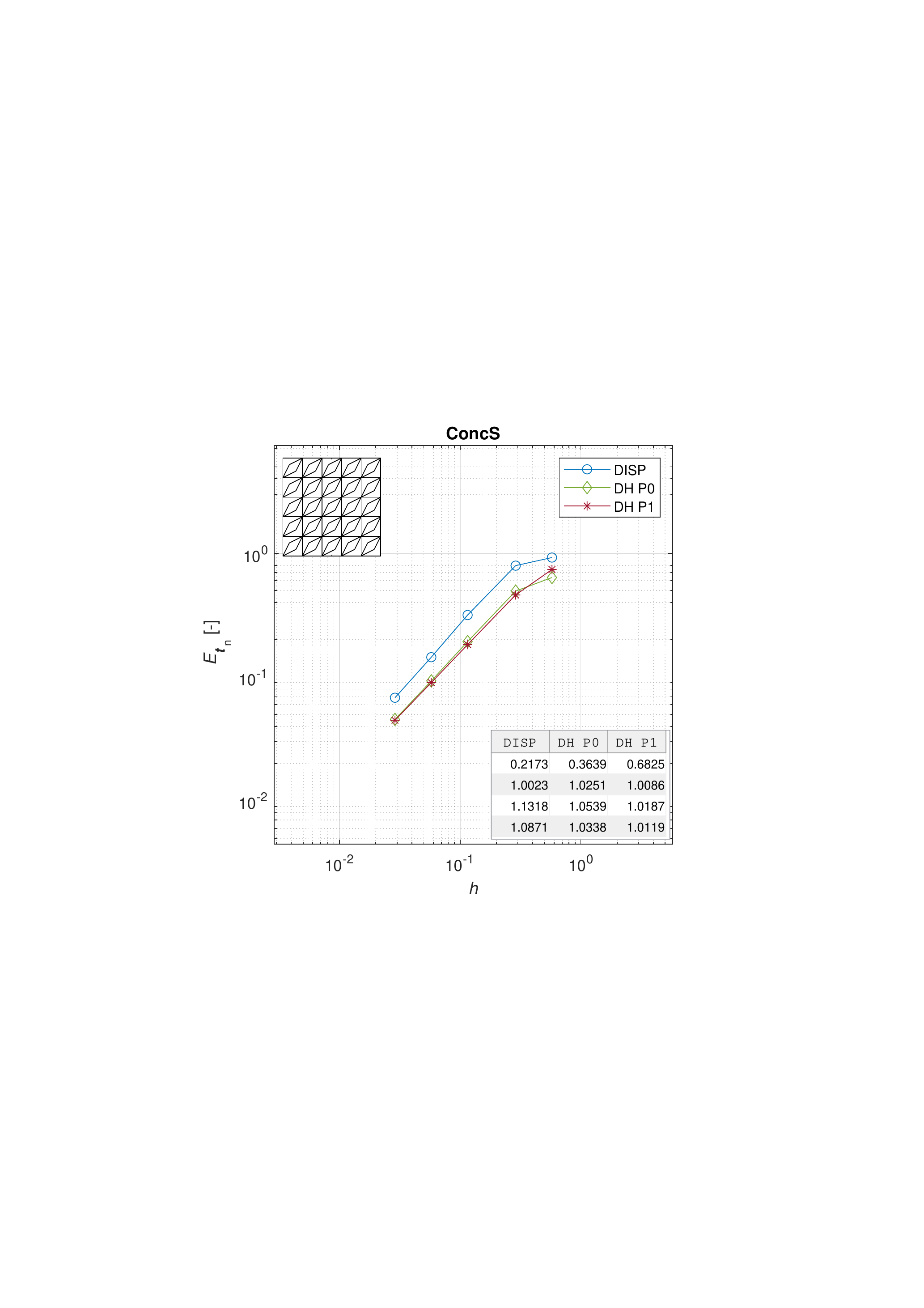}}
\subfigure
{\includegraphics[bb = 150 260 600 556,clip,scale=0.55]
                {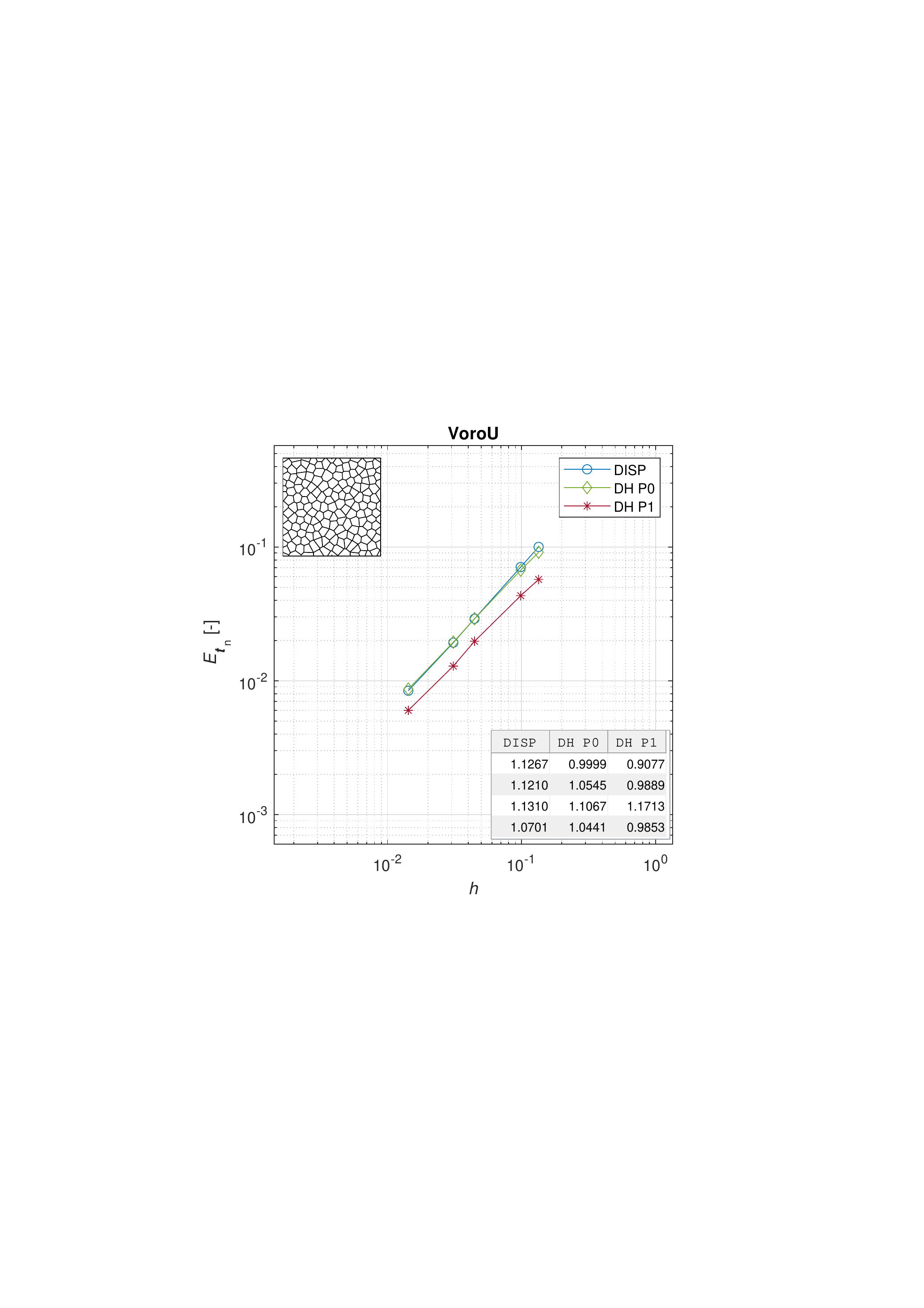}}
\caption{Test b - $E_{\bbt_{\bbn}}$ {\it vs.} $h$ curves with branch slopes. Structured mesh - left. Unstructured mesh - right.}
\label{fig:Test_b_E_tn}
\end{figure}

\clearpage
\newpage
\begin{figure}
\subfigure
{\includegraphics[bb = 135 260 550 556,clip,scale=0.55]
                {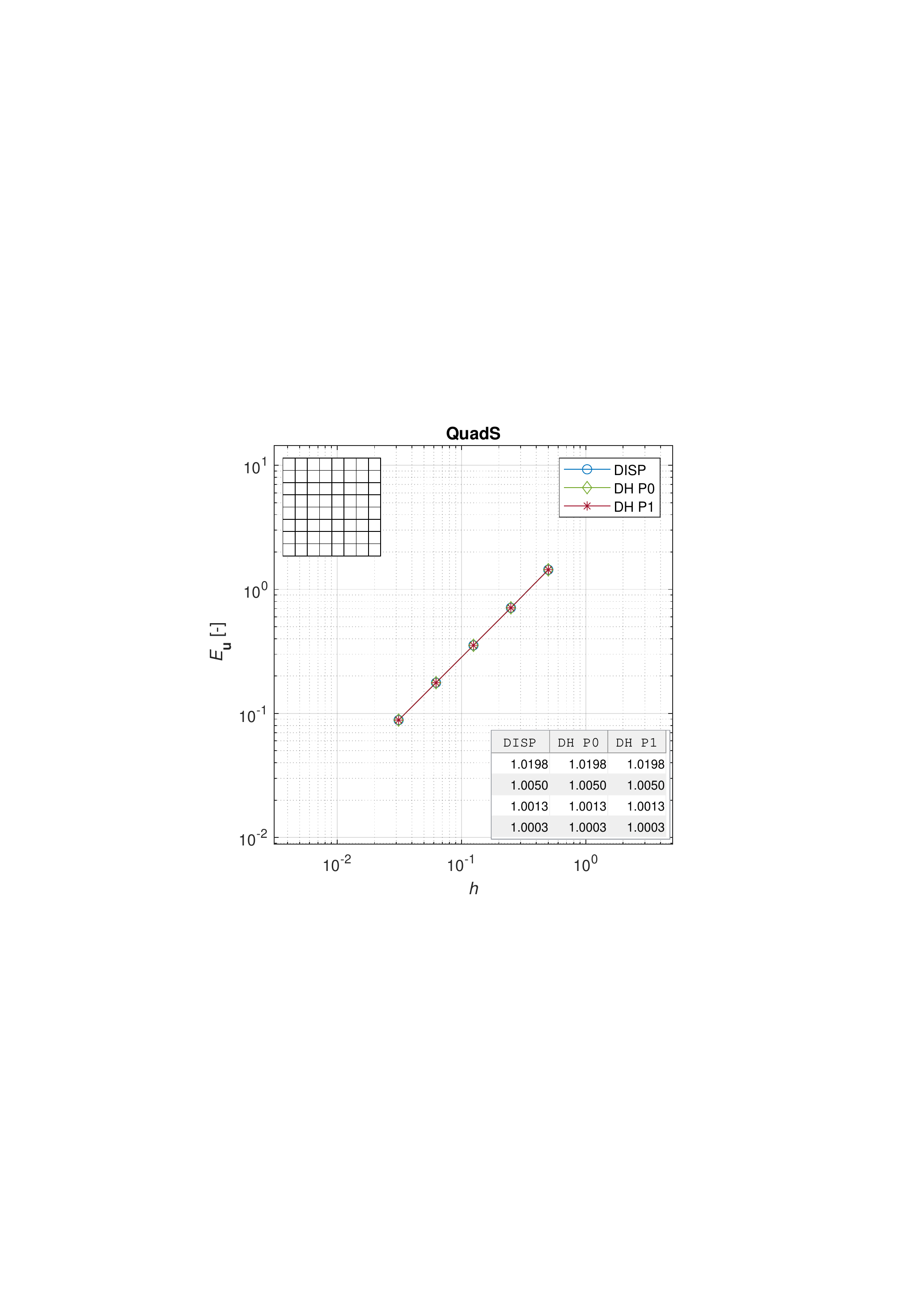}}
\subfigure
{\includegraphics[bb = 150 260 600 556,clip,scale=0.55]
                {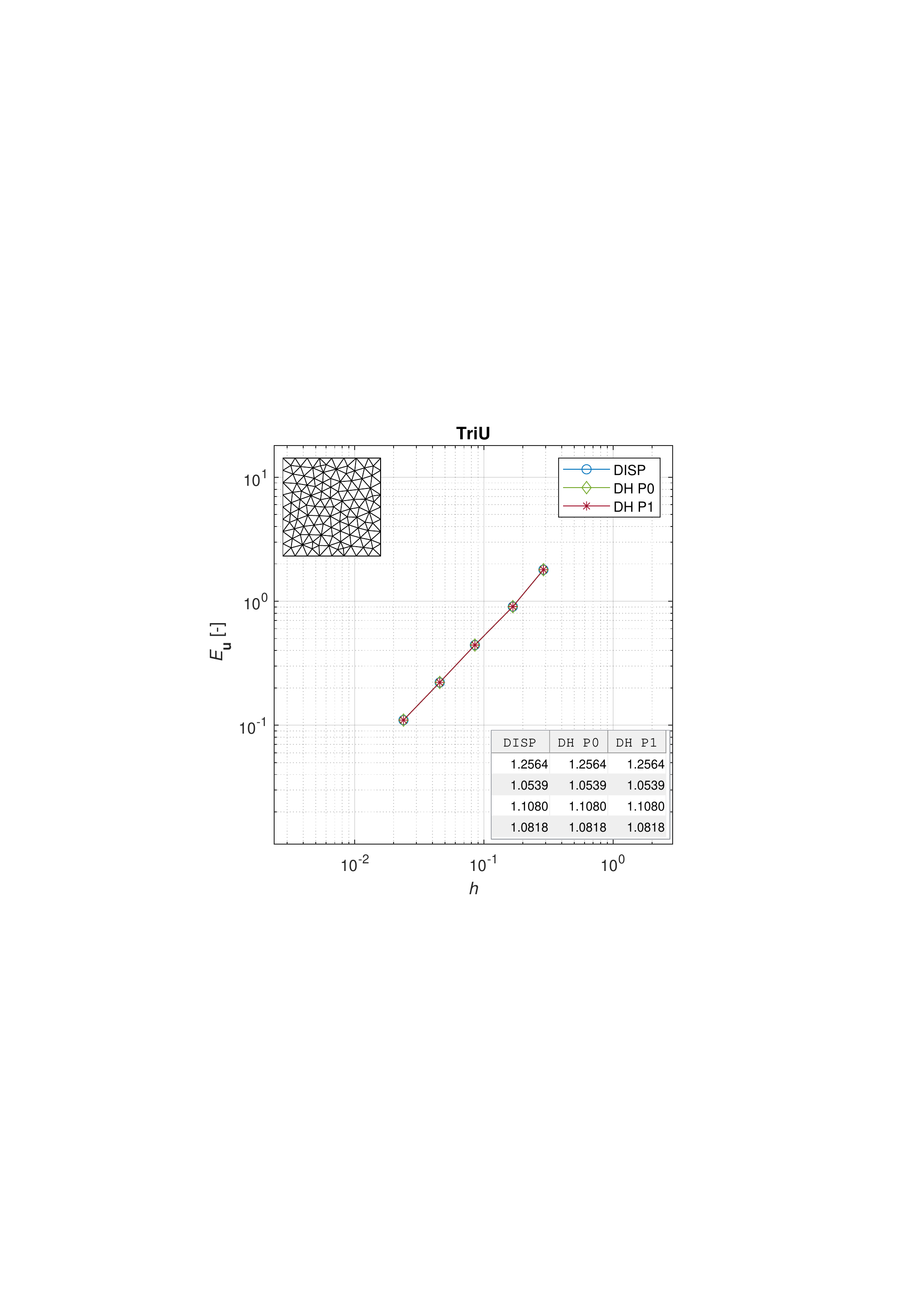}}
\subfigure
{\includegraphics[bb = 135 260 550 556,clip,scale=0.55]
                {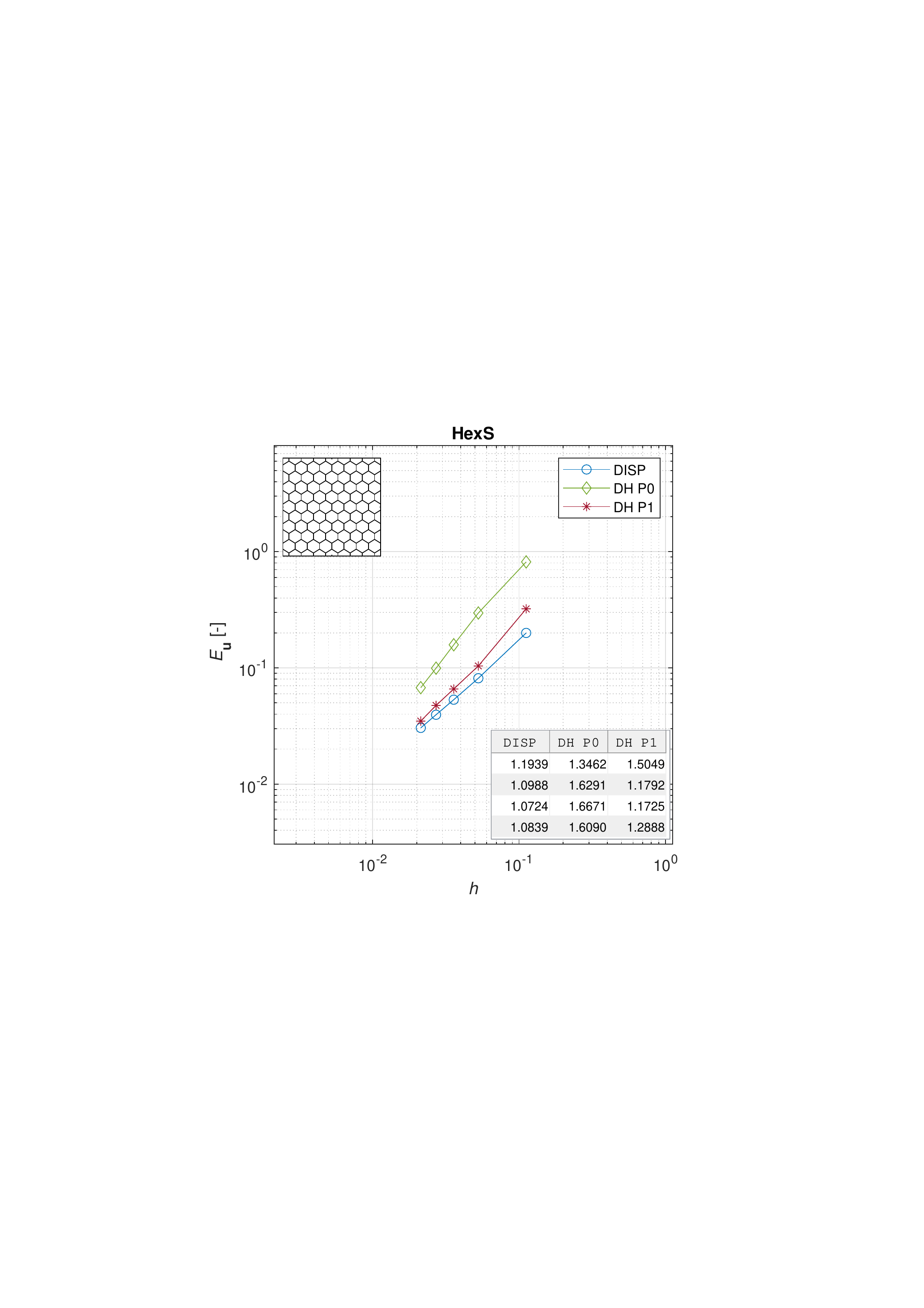}}
\subfigure
{\includegraphics[bb = 150 260 600 556,clip,scale=0.55]
                {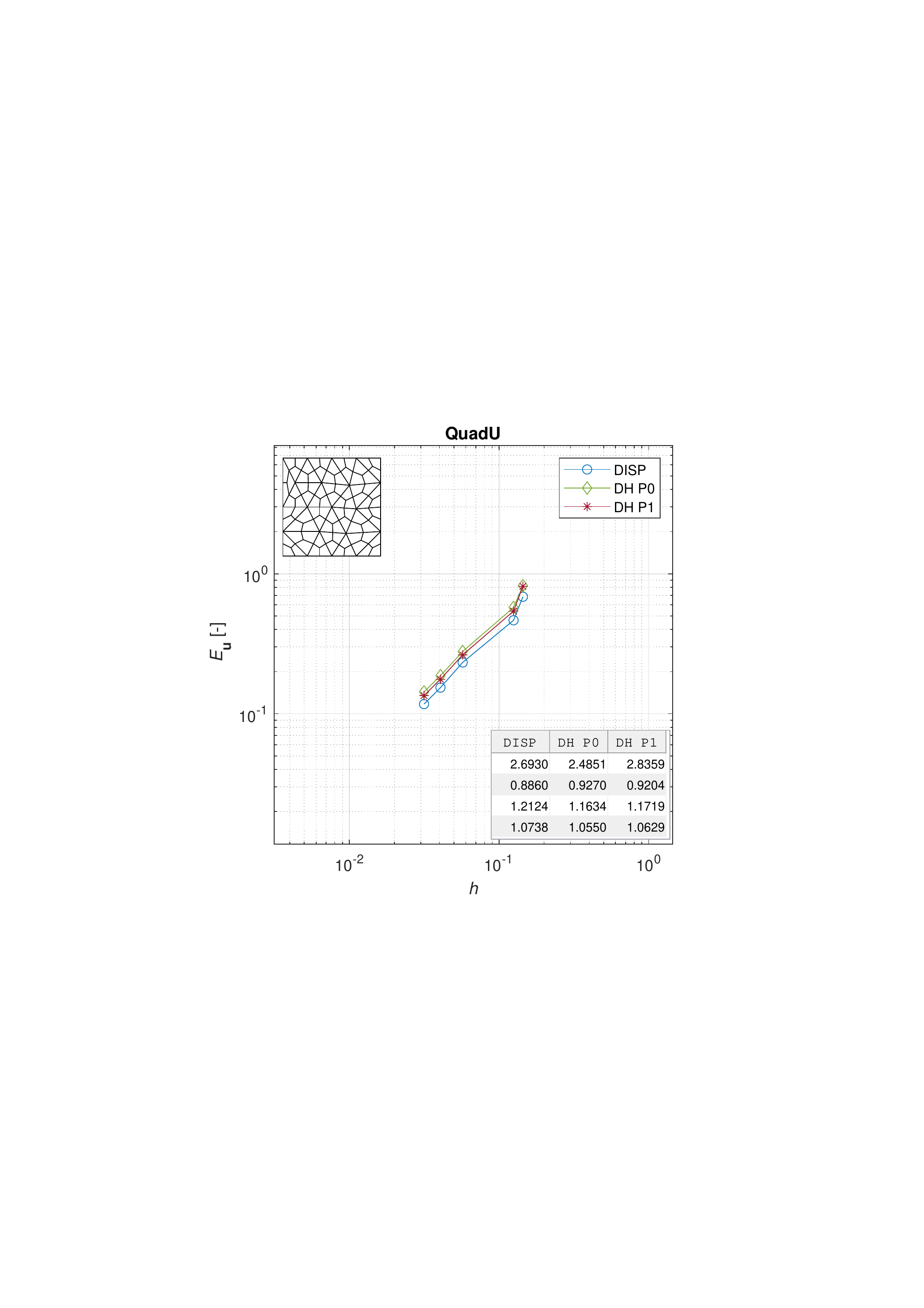}}
\subfigure
{\includegraphics[bb = 135 260 550 556,clip,scale=0.55]
                {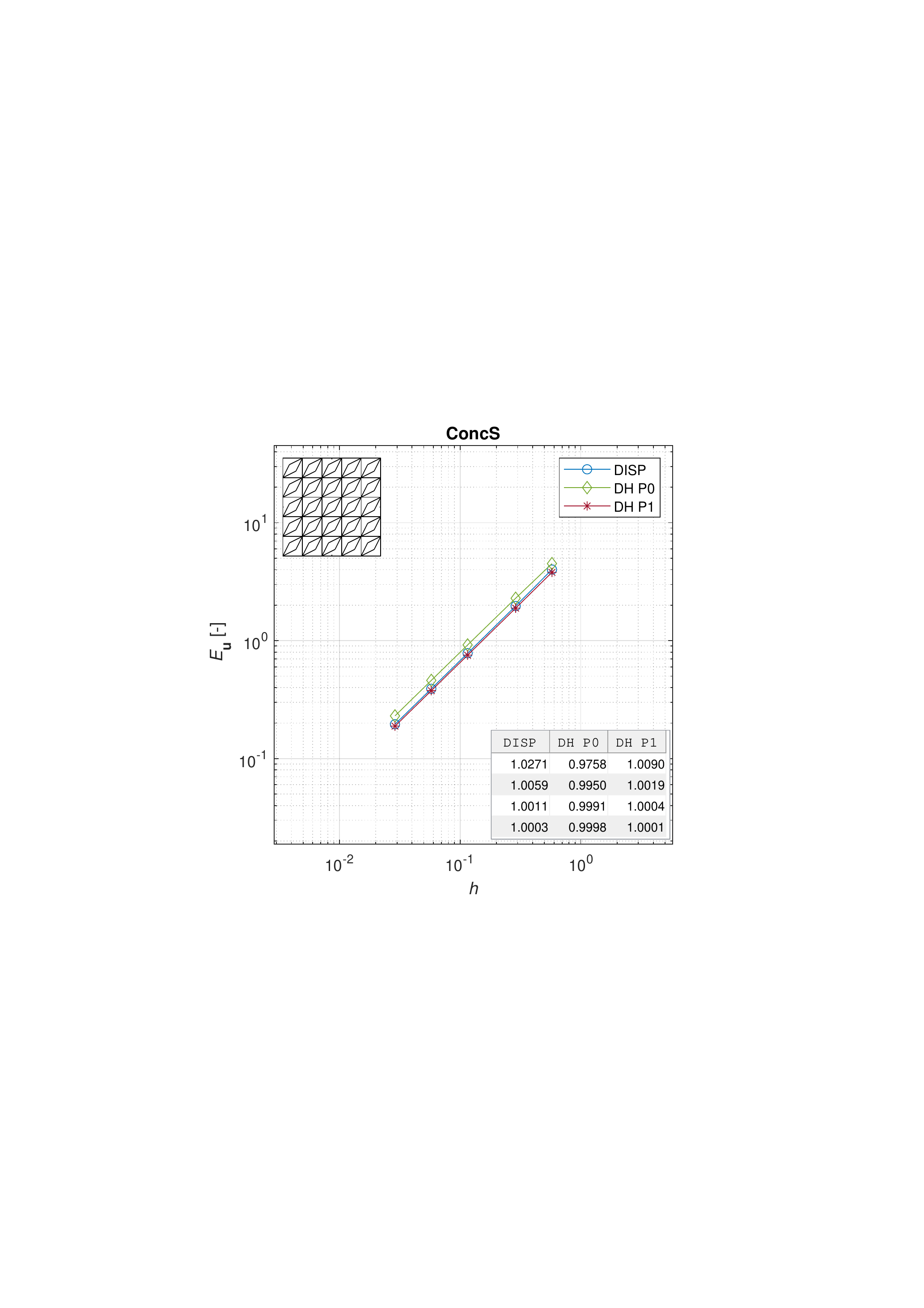}}
\subfigure
{\includegraphics[bb = 150 260 600 556,clip,scale=0.55]
                {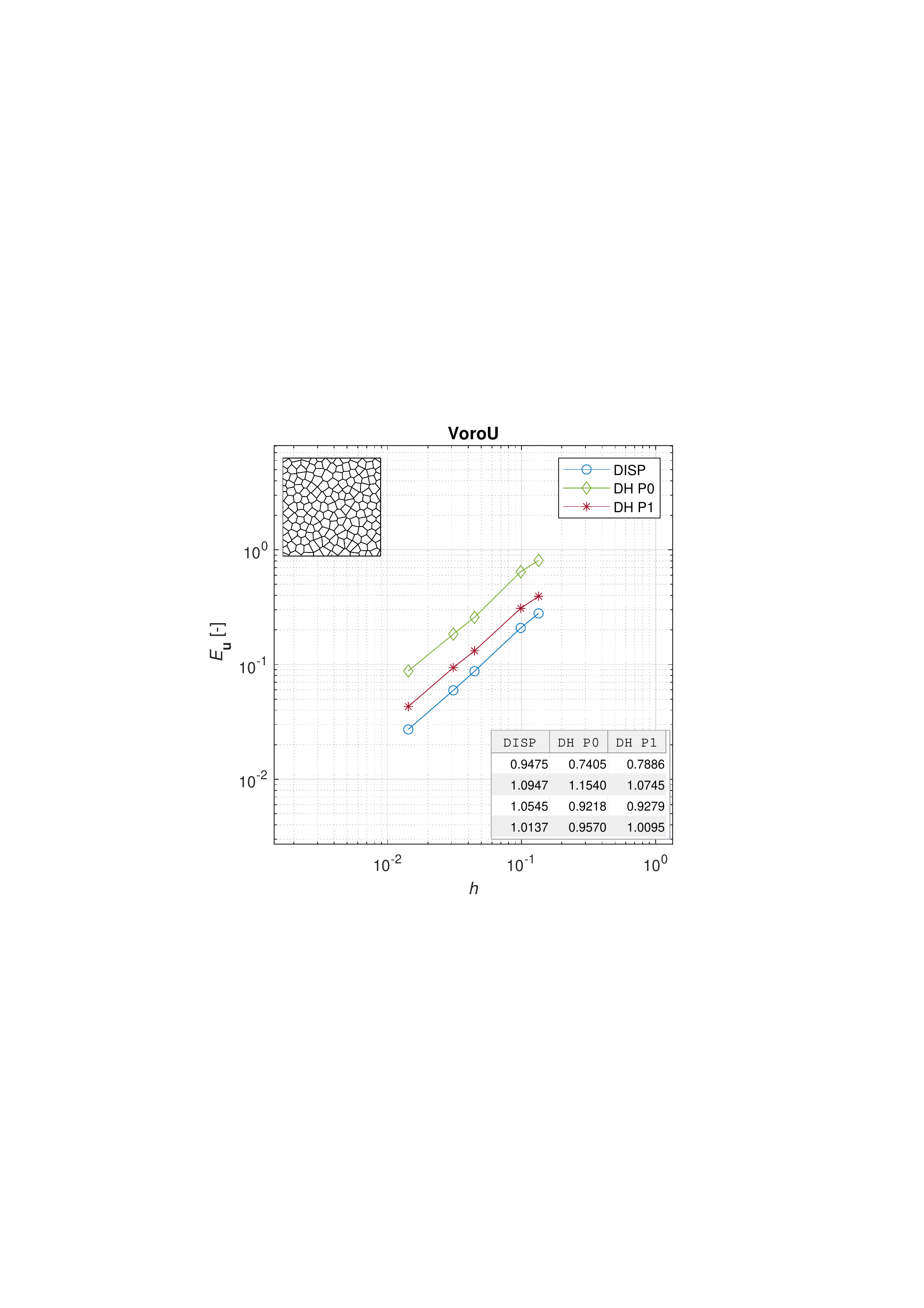}}
\caption{Test a - $E_{\bbu}$ {\it vs.} $h$ curves with branch slopes. Structured mesh - left. Unstructured mesh - right.}
\label{fig:Test_a_E_u}
\end{figure}

\clearpage
\newpage
\begin{figure}
\subfigure
{\includegraphics[bb = 135 260 550 556,clip,scale=0.55]
                {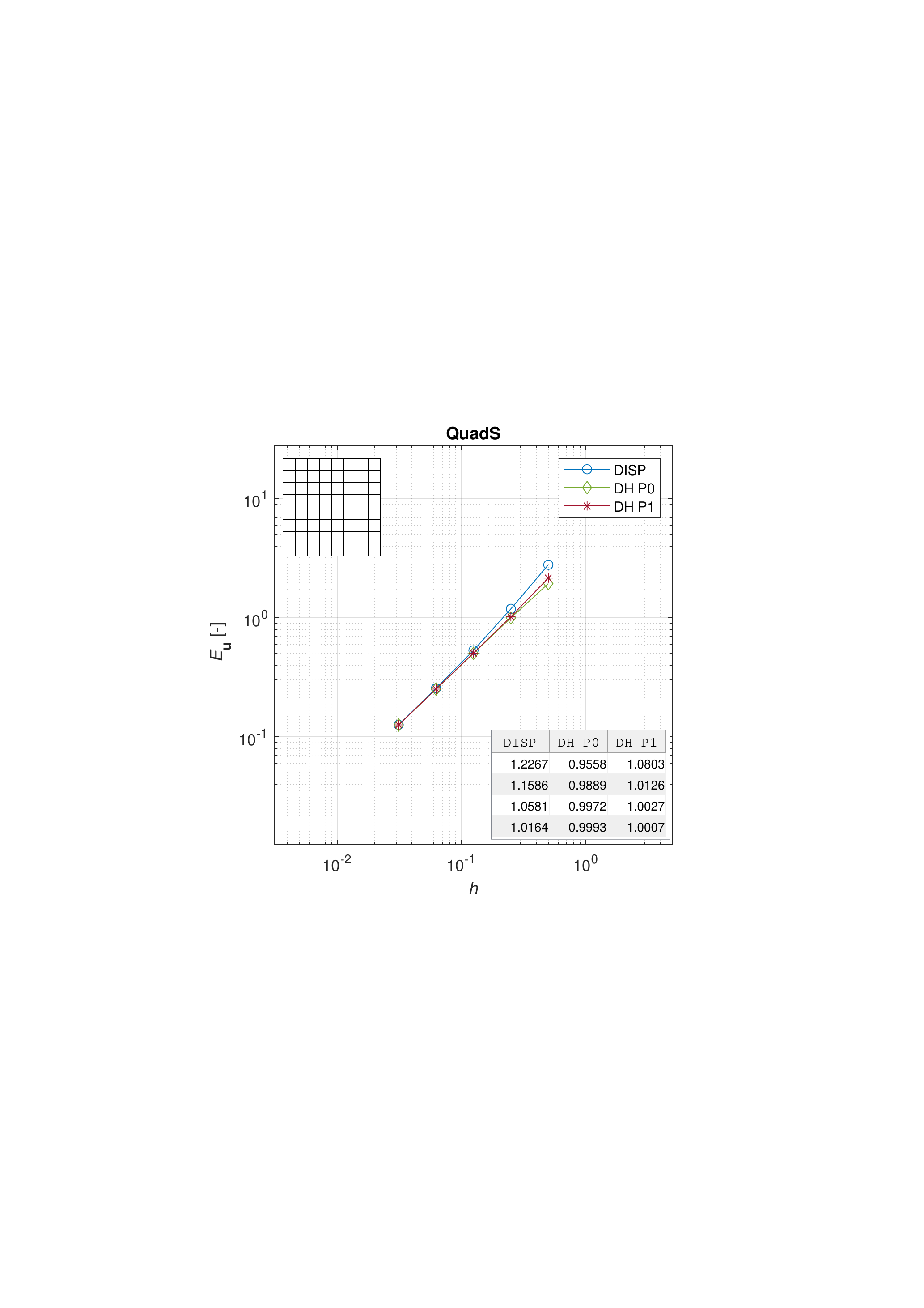}}
\subfigure
{\includegraphics[bb = 150 260 600 556,clip,scale=0.55]
                {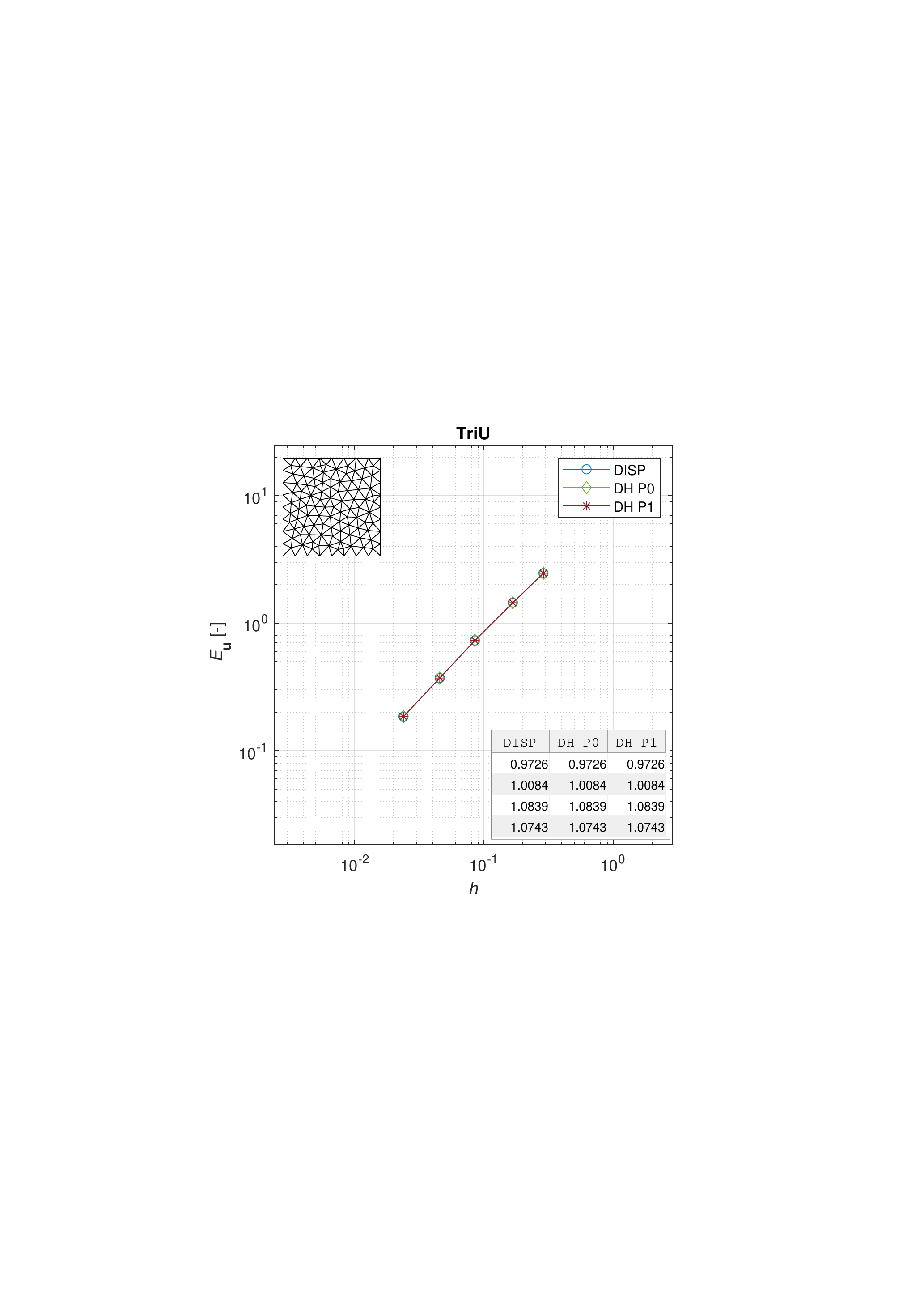}}
\subfigure
{\includegraphics[bb = 135 260 550 556,clip,scale=0.55]
                {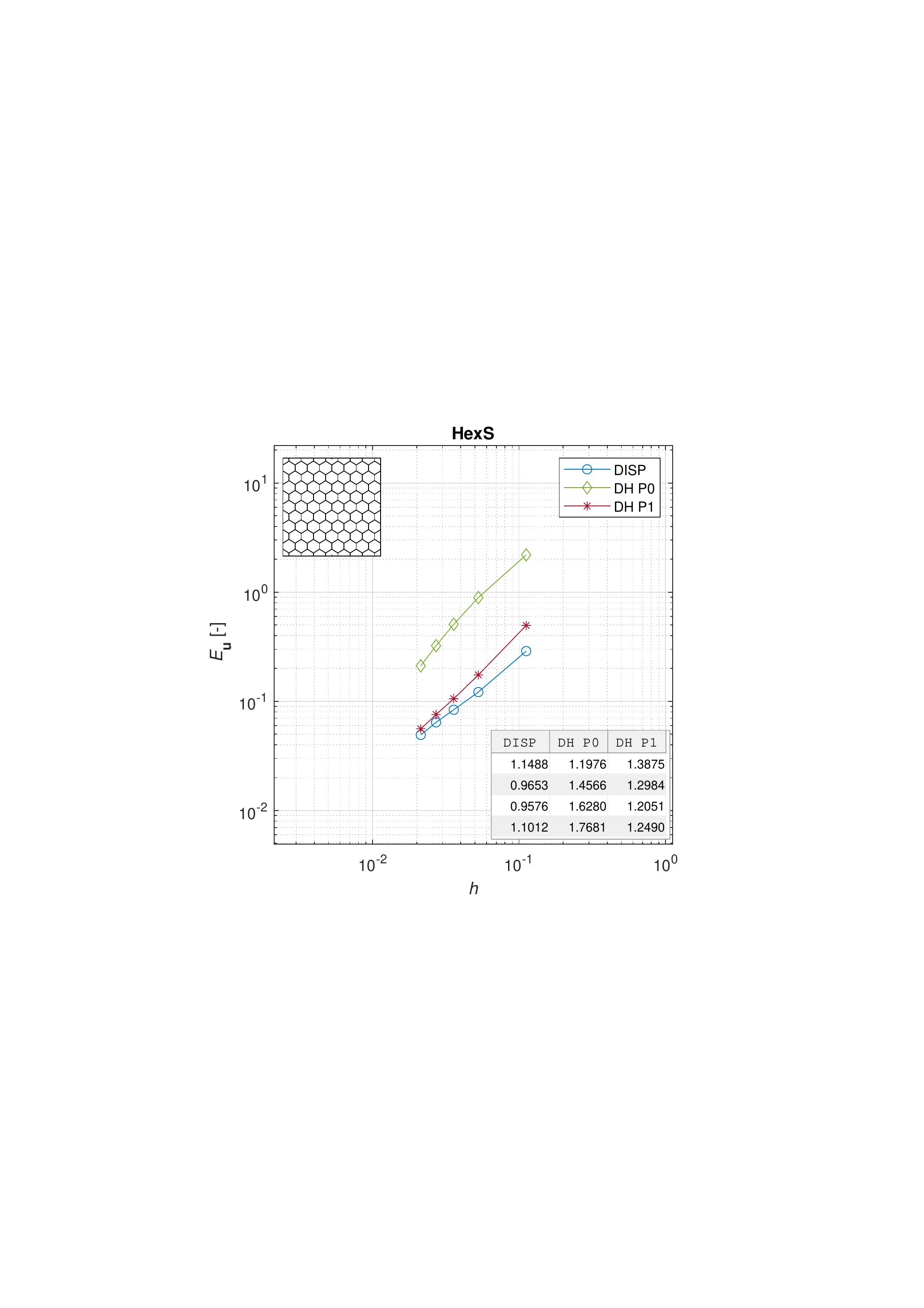}}
\subfigure
{\includegraphics[bb = 150 260 600 556,clip,scale=0.55]
                {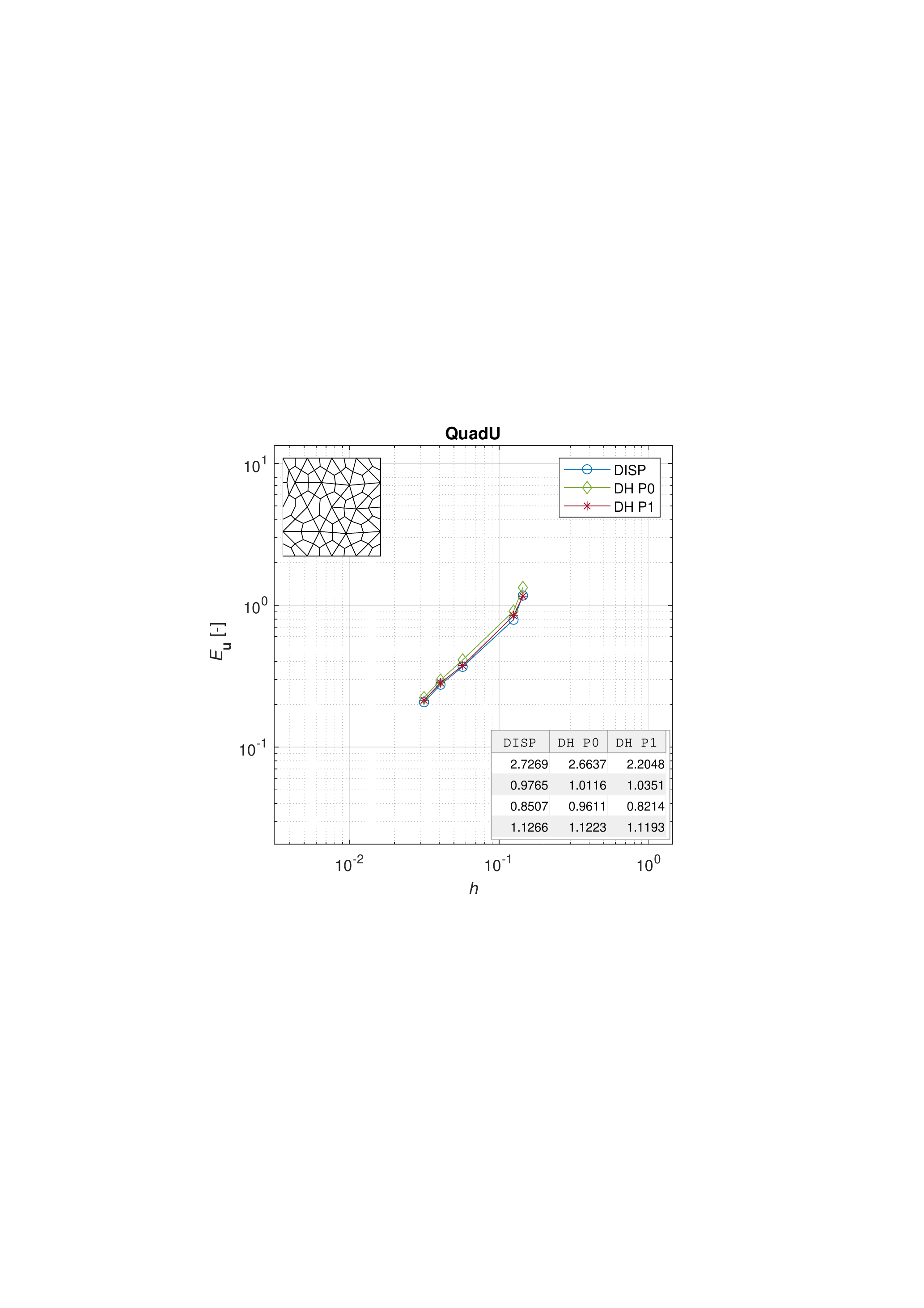}}
\subfigure
{\includegraphics[bb = 135 260 550 556,clip,scale=0.55]
                {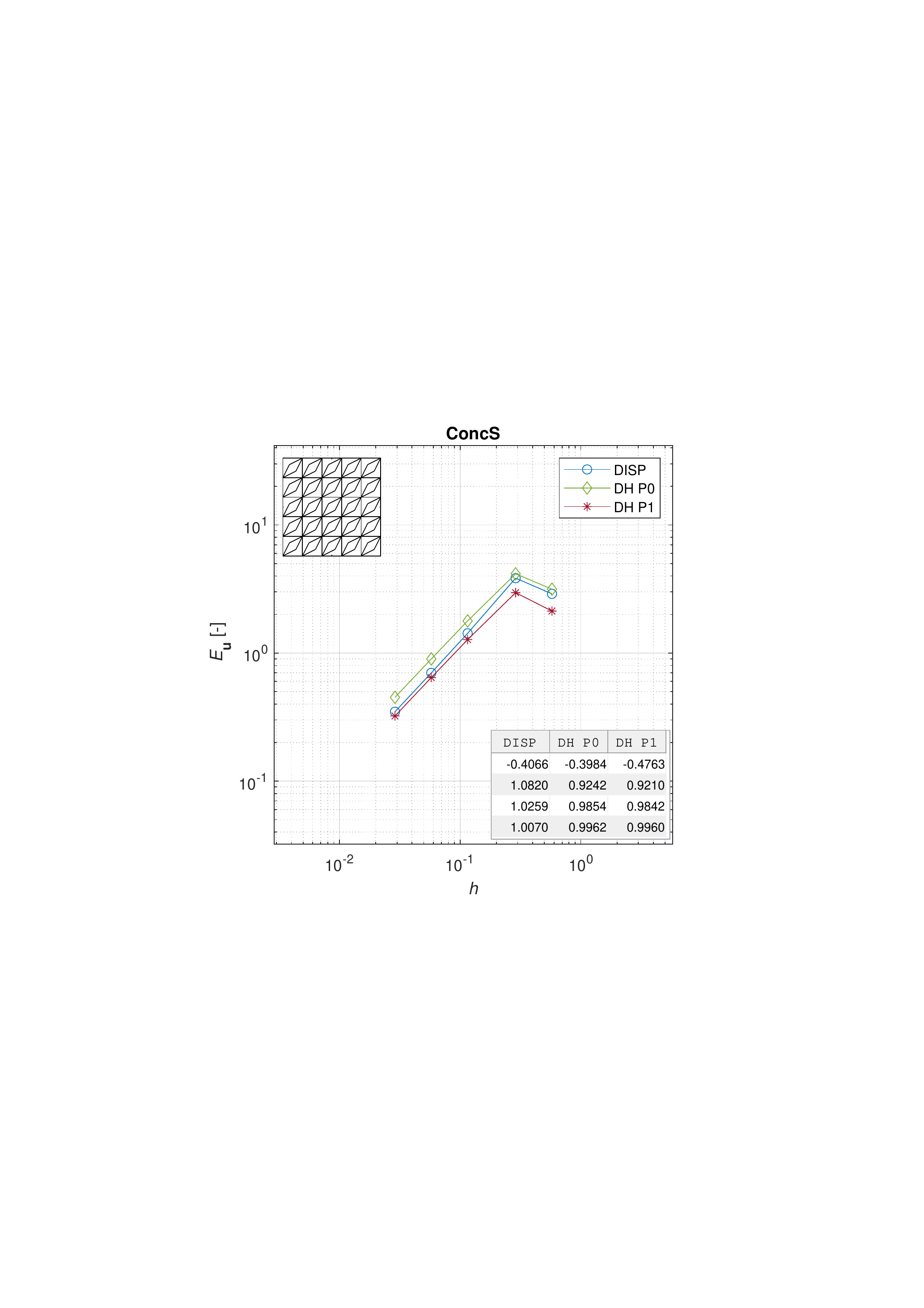}}
\subfigure
{\includegraphics[bb = 150 260 600 556,clip,scale=0.55]
                {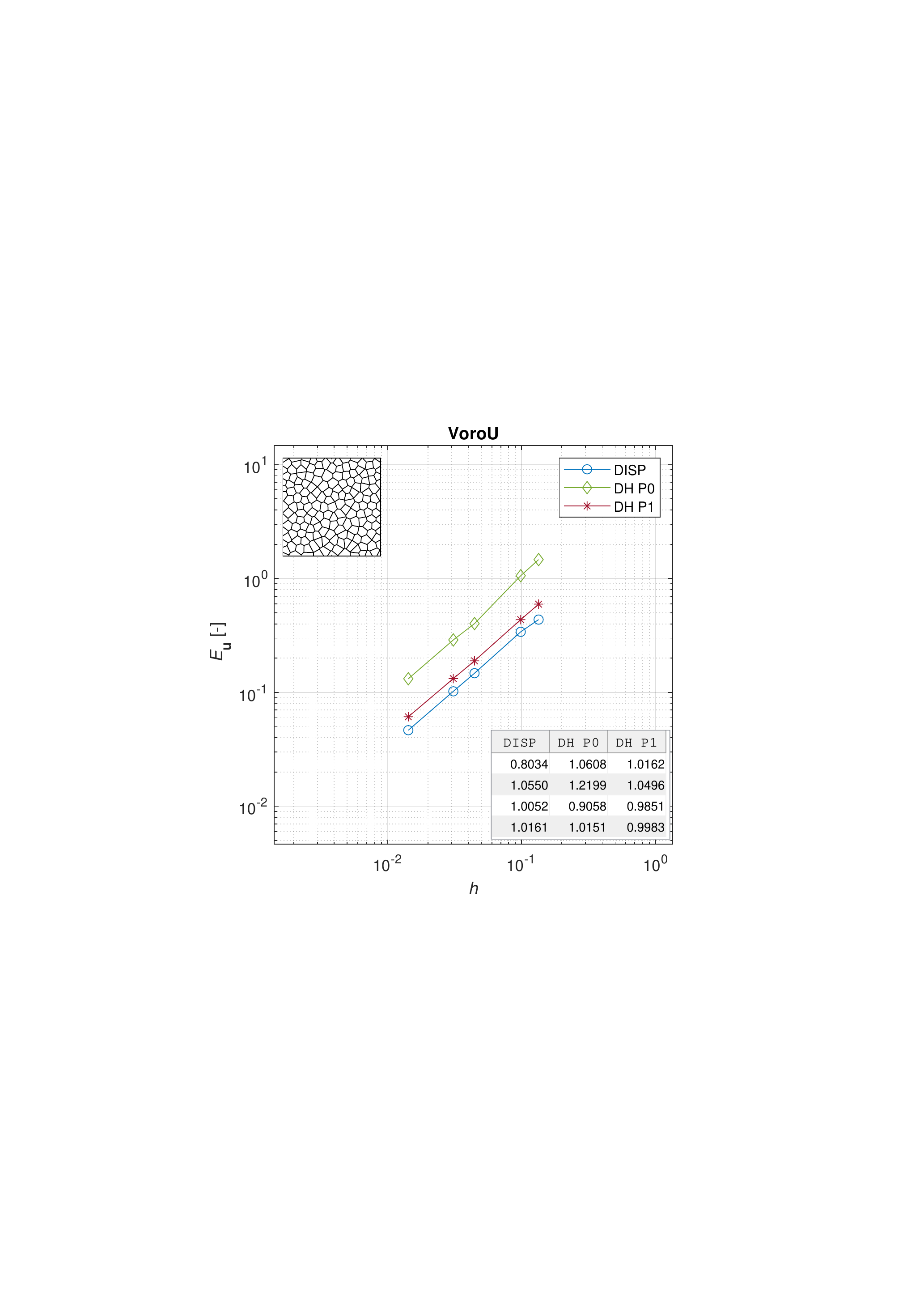}}
\caption{Test b - $E_{\bbu}$ {\it vs.} $h$ curves with branch slopes. Structured mesh - left. Unstructured mesh - right.}
\label{fig:Test_b_E_u}
\end{figure}

%
%
\clearpage
\newpage
\begin{figure}[h!]
\subfigure
{
\includegraphics[bb = 76 180 625 100,scale=0.39]
                {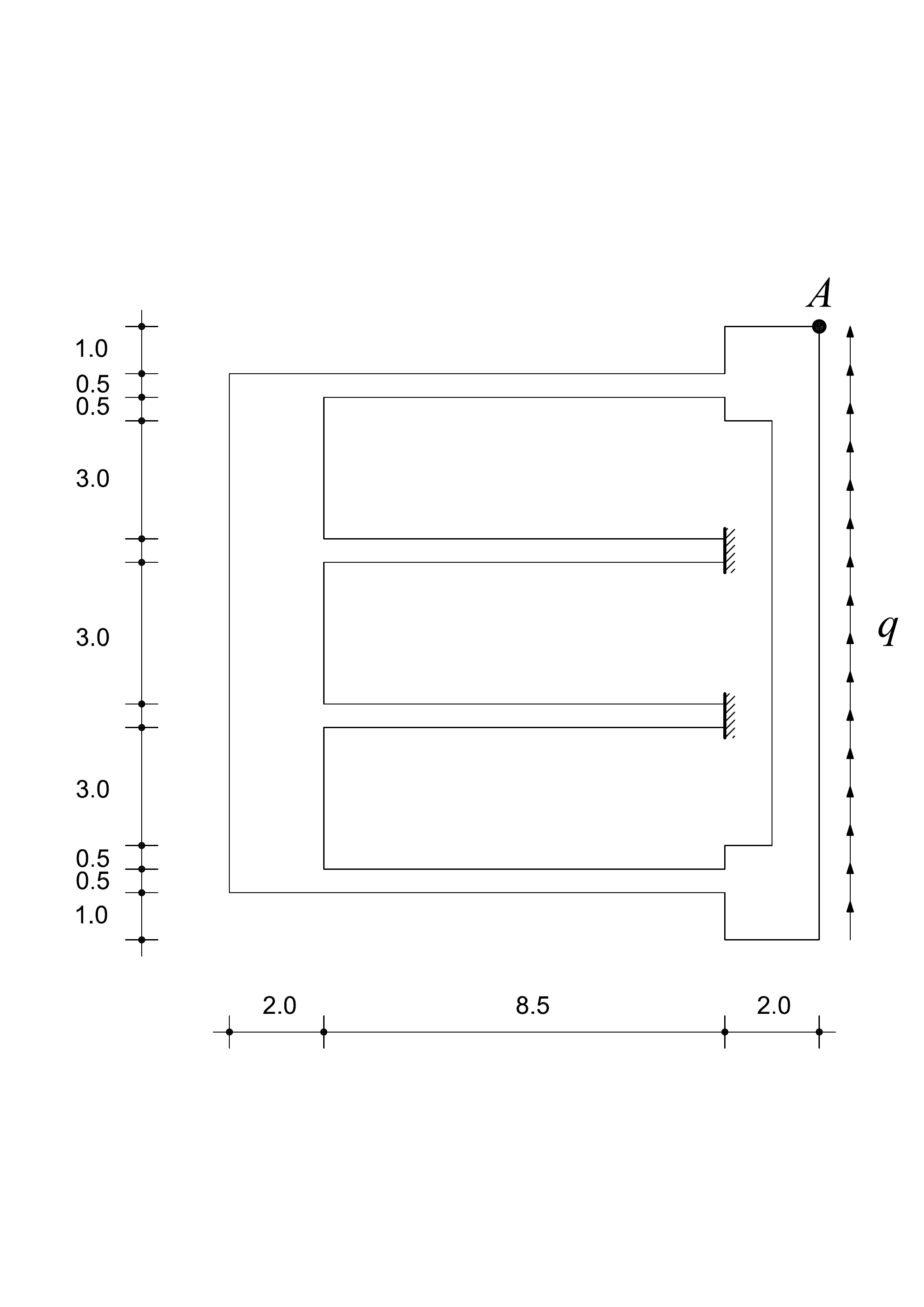}
                }
\subfigure
{
\includegraphics[bb = 120 260 600 556,clip,scale=0.450]
                {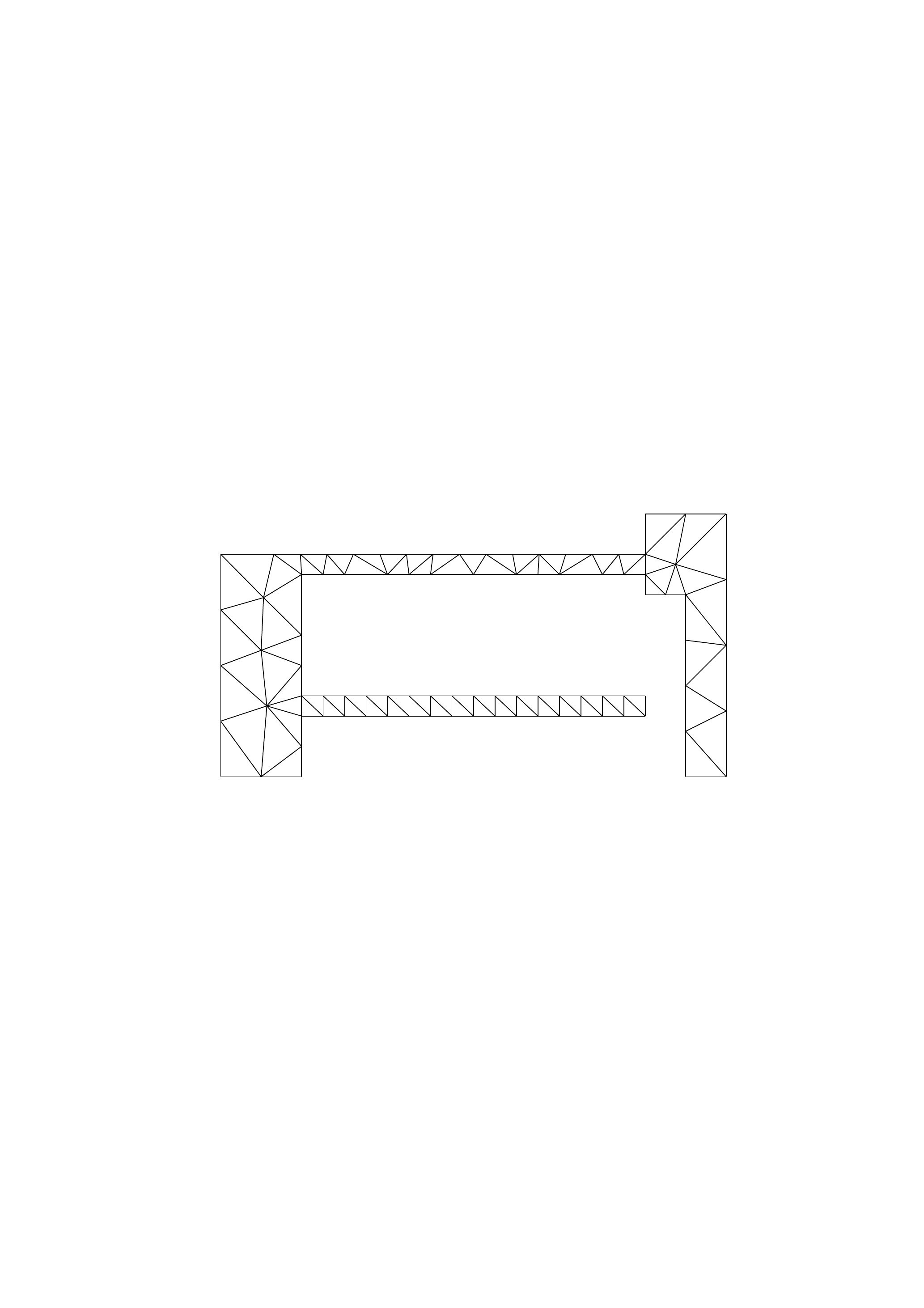}
                }
\subfigure
{
\includegraphics[bb = 129 260 600 556,clip,scale=0.450]
                {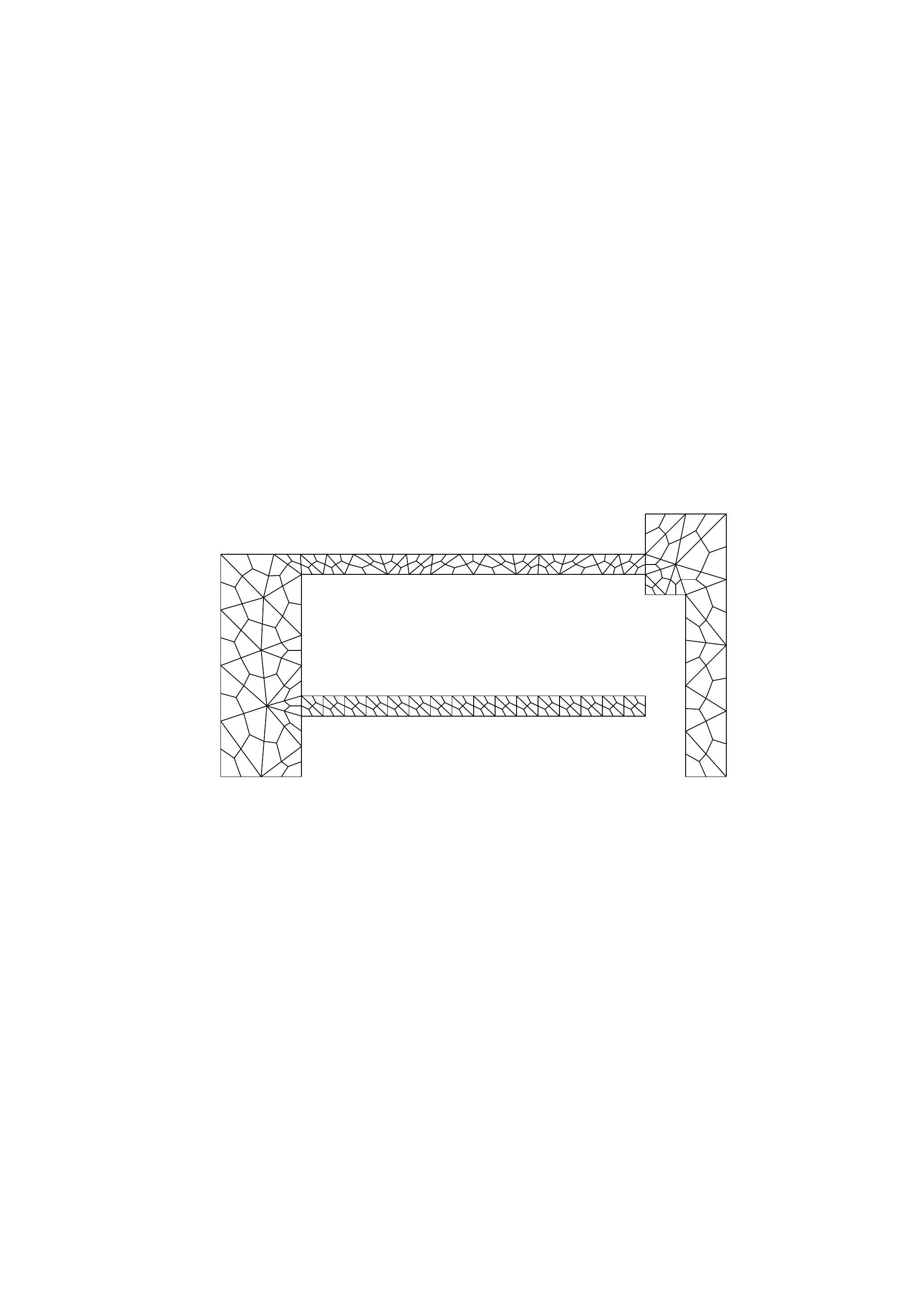}
                }
\subfigure
{
\includegraphics[bb = 125 260 600 556,clip,scale=0.450]
                {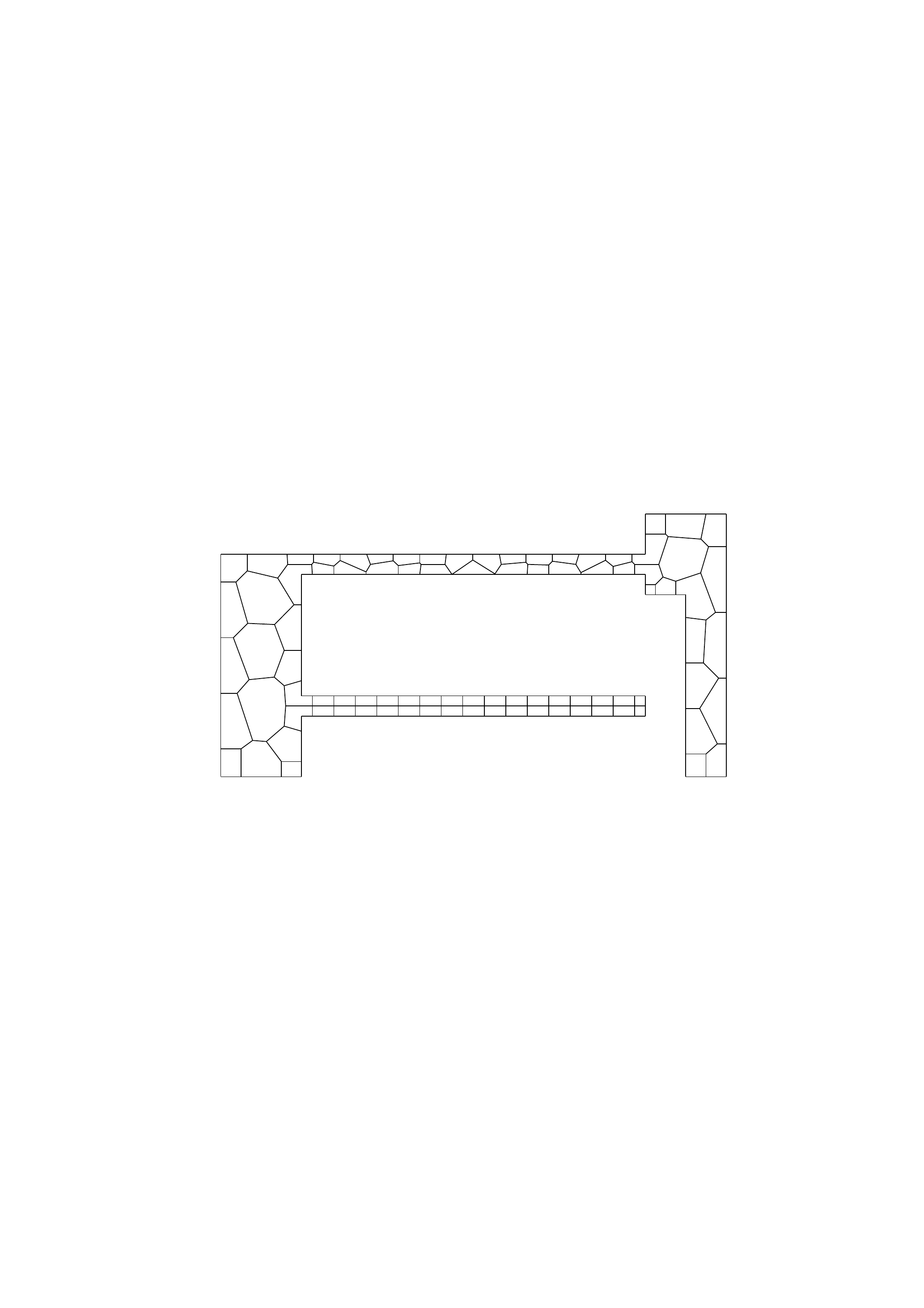}
                }
\caption{Folded-beam suspension. Geometry (quotes in $\mu$m), boundary conditions, loading. Mesh types and labels for relevant half-domain depicted for coarsest discretization adopted.}
\label{fig:comb_geom}
\end{figure}

\clearpage
\newpage
\begin{figure}[h]
\centering
\includegraphics[bb = 125 260 500 560,clip,scale=0.60]
                {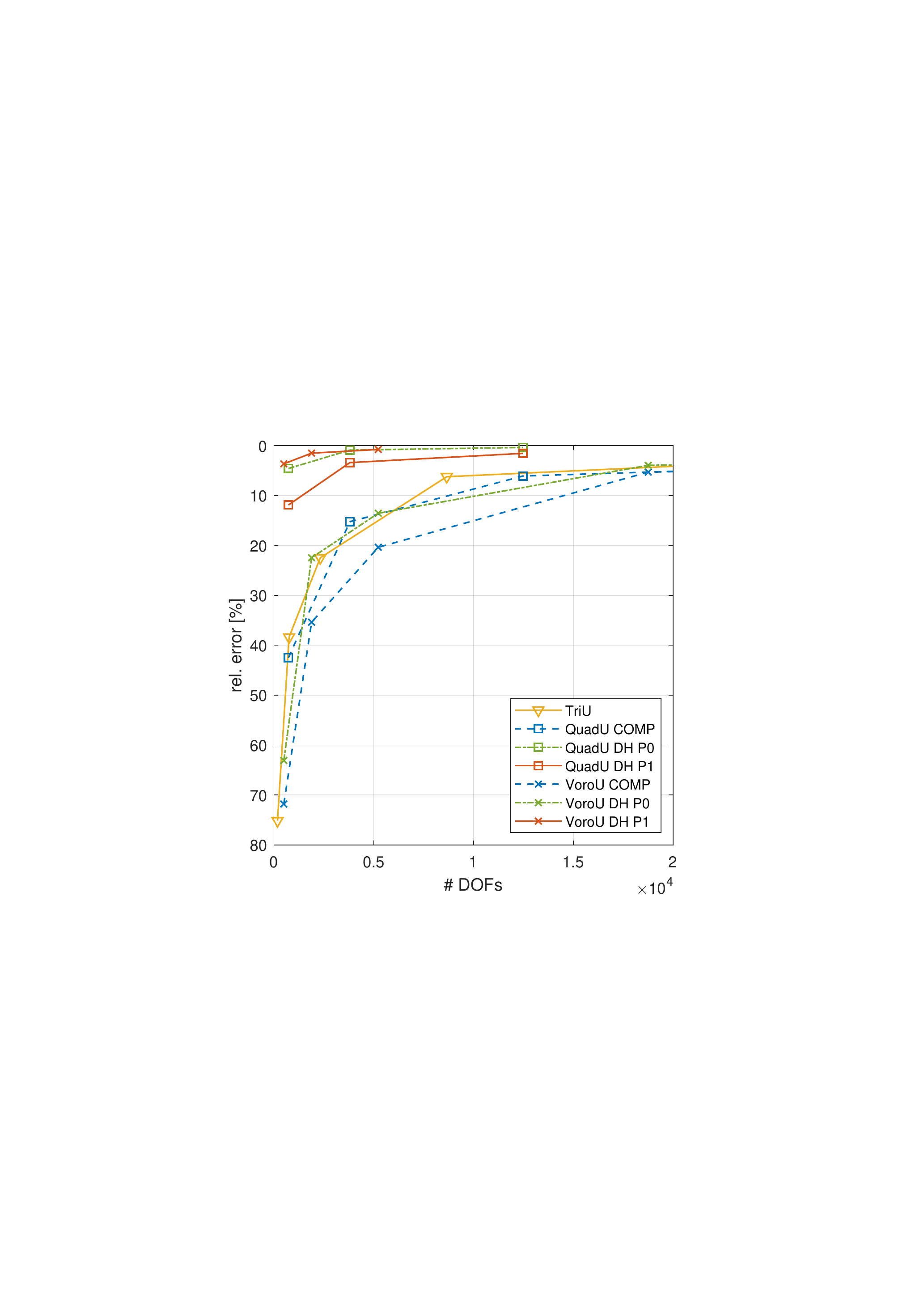}
\caption{Folded-beam suspension. Relative error plot for vertical displacement of target node A.}
\label{fig:comb_rslt}
\end{figure}
\clearpage
\newpage
\bibliographystyle{plain}
\bibliography{general-bibliography,biblio,VEM}
 \end{document}